\documentclass{birkjour}
\usepackage{amssymb}
\usepackage{relsize}
\usepackage{mathrsfs}
\newtheorem{theorem}{Theorem}[section]
\newtheorem{lemma}[theorem]{Lemma}
\newtheorem{proposition}[theorem]{Proposition}

\newtheorem{corollary}[theorem]{Corollary}
\newtheorem{definition}[theorem]{Definition}

\newtheorem{remark}[theorem]{Remark}

\newcommand{\Tr}{\mathop{\mathrm{Tr}}}
\renewcommand{\d}{\/\mathrm{d}\/}

\def\w{\textbf{W}^{\varepsilon}_{{\theta}^{\varepsilon}}}

\def\e{\varepsilon}

\def\t{t\wedge\tau_N}
\def\T{T\wedge\tau_N}
\def\x{\mathbf{x}}

\def\y{\textbf{y}}
\def\z{\textbf{z}}
\def\v{\mathbf{v}}
\def\w{\mathbf{w}}
\def\V{\mathbf{V}}
\def\u{\mathbf{u}}
\def\H{\mathbf{H}}
\def\n{\mathbf{n}}

\renewcommand{\d}{\/\mathrm{d}\/}

\begin{document}

\title[Stochastic Navier-Stokes Equations in Unbounded Channel Domains] {Stochastic Navier-Stokes Equations in Unbounded Channel Domains}

\author[Utpal Manna]{Utpal Manna}

\address{%
School of Mathematics\\
Indian Institute of Science Education and Research (IISER) Thiruvananthapuram\\
Thiruvananthapuram 695016\\
Kerala, INDIA}

\email{manna.utpal@iisertvm.ac.in}

\author[Manil T. Mohan]{Manil T. Mohan}

\address{%
School of Mathematics\\
Indian Institute of Science Education and Research (IISER) Thiruvananthapuram\\
Thiruvananthapuram 695016\\
Kerala, INDIA}

\email{manil@iisertvm.ac.in}

\author[Sivaguru S. Sritharan]{Sivaguru S. Sritharan}
\address{%
 Naval Postgraduate School\\
Monterey\\
 USA}

\email{sssritha@nps.edu}

\subjclass{Primary 76D05; Secondary 60H15, 76D03, 76D06}

\keywords{stochastic Navier-Stokes equations, viscous flow in
channels, path-wise strong solutions}

\begin{abstract}
In this paper we prove the existence and uniqueness of path-wise
strong solution to stochastic viscous flow in unbounded channels
with multiple outlets using local monotonicity arguments. We devise
a construction for solvability using a stochastic basic vector
field.
\end{abstract}

\maketitle\setcounter{equation}{0}

\maketitle

\section{Introduction} This paper concerns with stochastic
fluid dynamics in unbounded channel domains with noncompact
boundaries generalizing the deterministic results in Sritharan
\cite{SS4}. Mathematical theory of viscous incompressible flow
through unbounded channel has many applications such as hydraulics
in water resources, hydraulic machinery, oil transport networks,
flow in engines etc. Well-posedness theorem is an essential step for
applications in optimal control theory (Sritharan \cite{SS9}),
convergence of numerical algorithms and nonlinear filtering
(Sritharan \cite{SS8}, Fernando and Sritharan \cite{FS}).
Solvability theory of generalized solutions to Navier-Stokes
equations was pioneered by Leray \cite{Le}, Hopf \cite{Hp} and
Ladyzhenskaya \cite{La1}, \cite{La}. Steady state flow through
channels of various kinds has been studied by a number of authors
including Amick \cite{Am1}, \cite{Am2}, Amick and Franenkel
\cite{AF}, Ladyzhenskaya and Solonnikov \cite{LS}, \cite{LS1},
\cite{LS2}. In \cite{Am1}, Amick discussed the steady flow of
viscous incompressible fluid in channels and pipes in two and three
dimensions which are cylindrical outside some compact set. The paper
by Heywood \cite{He1} highlighted the question of uniqueness of the
solution of the Navier-Stokes equations for certain unbounded
domains modeling channels, tubes, or conduits of some kind and the
importance of prescribing flux or the overall pressure difference.
In \cite{AF}, Amick and Fraenkel studied steady state solutions of
the Navier-Stokes equations in various types of two dimensional
channel domains. In \cite{He2}, Heywood constructed classical
solutions of the Navier-Stokes equations for both stationary and
non-stationary boundary value problems in arbitrary
three-dimensional domains with smooth boundaries. The time dependent
flow through the three-dimensional channels with outlets diverging
at infinity has been studied by Ladyzhenskaya and Solonnikov
\cite{LS} and, Solonnikov \cite{So1}. The paper by Solonnikov
\cite{So1} presents solvability of boundary value problems for the
Stokes and Navier-Stokes equations in noncompact domains with
several oulets to infinity.  Babin \cite{Ba1} considered the
Navier-Stokes system in an unbounded planar channel-like domain and
proved that when the external force decays at infinity, the
semigroup generated by the system has a global attractor and its
Hausdroff dimension is finite using weighted Sobolev estimates. The
paper by Sritharan \cite{SS4} addressed the following two important
cases which were not considered in the earlier works:
\begin{itemize}
\item [(i)] time-dependent flow through two and three dimensional
channels with finite cross section;
\item [(ii)] time-dependent flow through two-dimensional channels
with outlets diverge at infinity
\end{itemize}
and provided a unique solvability theorem for the two-dimensional
case of the problem type (i).
\begin{figure}[b]
\begin{center}
\includegraphics[width=0.5\textwidth]{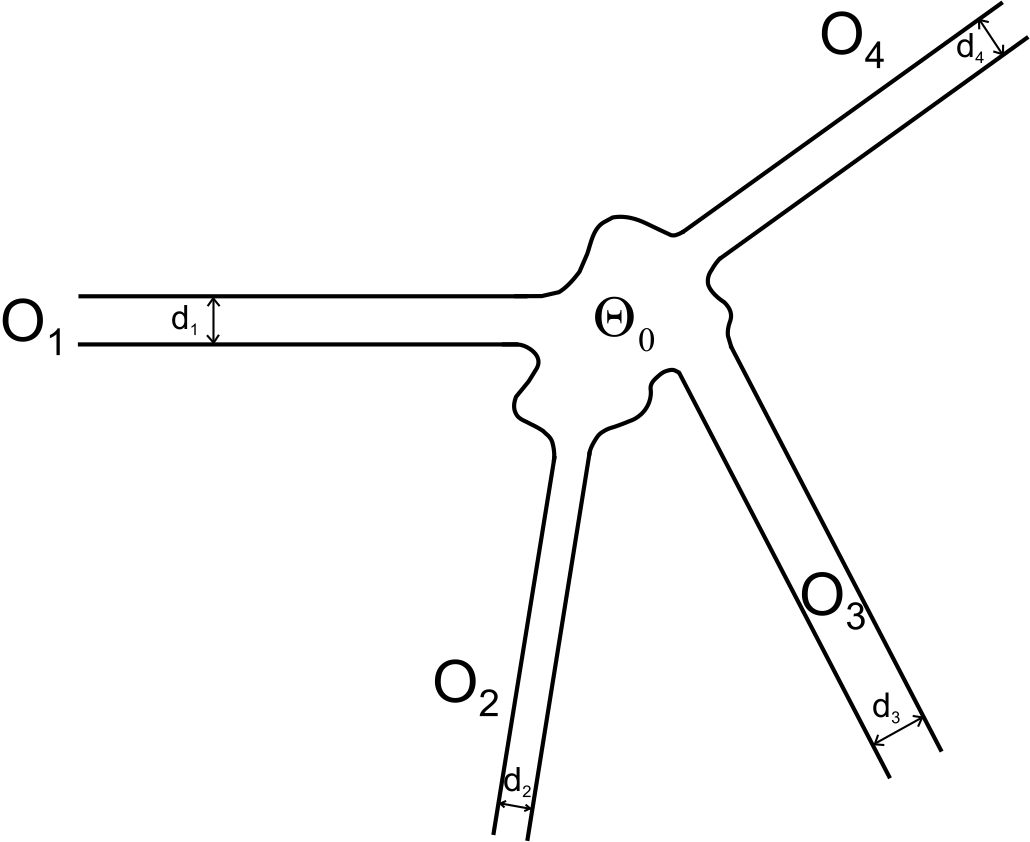}
\end{center}
\caption{multi-channel domain } \label{acd1}
\end{figure}
Solvability of stochastic Navier-Stokes equations in unbounded
channel-like domains with non-zero flux condition have remained as
an open problem in both two and three-dimensions. To the best of the
authors knowledge, this work appears to be the first systematic
treatment of stochastic two-dimensional Navier-Stokes equations in
such domains. In this paper we consider a stochastic version of the
problem of type (i) in multi-channel domains in two dimensions and
prove a unique solvability theorem with a possible future extension
to three dimensions (up to a stopping time determined by the size of
the flux and the Reynolds number). The problem of type (ii) may
possibly be resolved by suitably choosing a conformal mapping (see
Amick and Franenkel \cite{AF} for similar ideas in the case of
steady flows) to straighten the diverging outlets.

Let us consider the unbounded multi-channel domains, with several
outlets as shown in the figure (see Figure \ref{acd1}). Let the
outlets of the multi-channel domain be named as $\mathbf{O}_1,
\mathbf{O}_2,\cdots,\mathbf{O}_N$ and outside a compact region let
the outlets be of constant widths $d_1,d_2,\cdots,d_N$. Our first
step is to construct a basic vector field through each of these
outlets with the stochastic flux $\mathscr{F}_i(t,\omega)$ such that
$\sum_{i=1}^N\mathscr{F}_i(t,\omega)=0$, $\mathbb{P}$ - a. s. The
methodology of proof can be understood by considering a channel with
two outlets having a unit width connected in a smooth way
$\Theta=\mathbf{O}_1\cup\mathbf{O}_2\cup\Theta_0$ (see Figure
\ref{acd2}).
\begin{figure}[h]
\begin{center}
\includegraphics[width=0.5\textwidth]{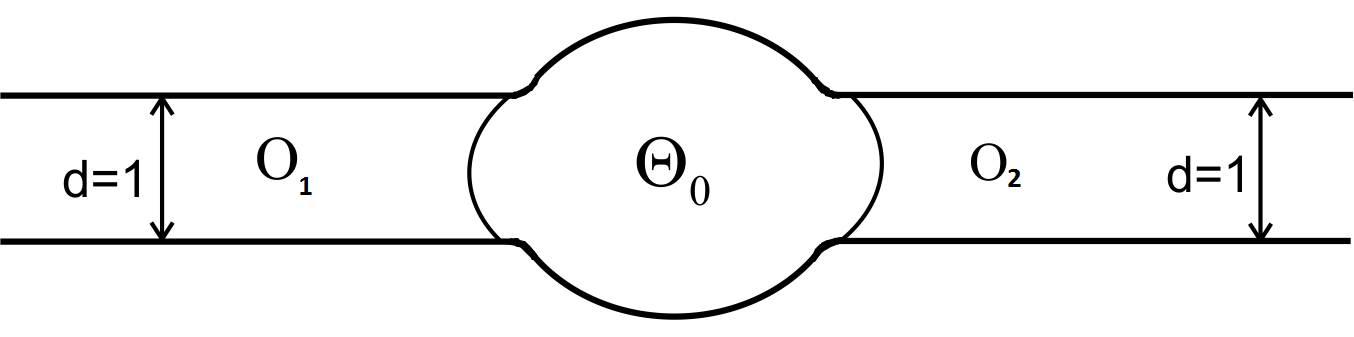}
\end{center}
\caption{channel with two outlets having unit width} \label{acd2}
\end{figure}

Let us now discuss the problems of type (i) and examine the
difficulties that arise in proving solvability. The time-dependent
Navier-Stokes problem is usually treated using the method of Hopf
\cite{Hp} in the deterministic setting which relies on
$\mathrm{L}^2$-energy estimates. The traditional methods of
solvability fail in the absence of an energy inequality. For
channels of finite cross section, as pointed out in Sritharan
\cite{SS4}, in order for the net flux to be nontrivial, the velocity
field should not decay to zero at infinity (upstream and downstream)
and hence such velocity fields would then have infinite energy.

Below we give a heuristic argument regarding the infinite energy of
such velocity fields. We also point out that in the absence of a
rigorous decay theory, only a heuristic argument could be made in
this regard.

Let the velocity field be stochastic and modeled on a complete
probability space ($\Omega, \mathcal{F},\mathcal{F}_t, \mathbb{P}$).
If the stochastic net flux of the velocity field $\u$ is
$\mathscr{F}(t,\omega)$, $\omega\in\Omega$, then across any cross
section $\Gamma$, we have
$$\int_{\Gamma}\u\cdot\n\d S=\mathscr{F}(t,\omega),$$
where $\n$ is the normal to the curve $\Gamma$ and $\d S$ is the
length element.

In this case, the velocity $|\u(\x,t,\omega)|\nrightarrow 0$ as
$|x|\to\infty$, (where $\x=(x,y)$ with $y$ is of constant width)
$\mathbb{P}$-a. s. To see this let us take the $2$-D case with the
outlet $\mathbf{O}_2=\left\{(x,y)\in(0,\infty)\times(0,1)\right\}$.
The flux across any cross section is same throughout the channel,
due to divergence free condition. That is, if $|\u(\x,t,\omega)|\to
0$ as $x\to\infty$, then the flux at the far field is zero. Hence
the flux across any cross section is zero throughout the channel
giving the net flux is zero. Thus for the flux to be non-zero, we
need the condition that $|\u(\x,t,\omega)|\nrightarrow 0$ as
$x\to\infty$.

As \textit{an
 example}, we
consider the decay of $\u(x,y,t,\omega)$ in the following form
 $$\u(x,y,t,\omega)\sim
 \frac{C_1(y,t,\omega)}{|x|^{\alpha}}+C_2(y,t,\omega),\alpha>1$$ for
 sufficiently large $M$ with $|x|>M$, where $C_1(y,t,\omega)$ and
 $C_2(y,t,\omega)>0$ be $\mathrm{L}^2(\Omega;C(0,T;\mathrm{L}^2(0,1)))$ functions such that
$$\int_0^1C_1(y,t,\omega)\d y=0\textrm{ and
 }\int_0^1C_2(y,t,\omega)\d y=\mathscr{F}(t,\omega).$$ From
 above, we have $\u(x,y,t,\omega)$ is flux carrying in the far field as $$\int_0^{1}\u(x,y,t,\omega)\d y\sim
 \int_0^1\frac{C_1(y,t,\omega)}{|x|^{\alpha}}\d
 y+\int_0^1C_2(y,t,\omega)\d y=\mathscr{F}(t,\omega).$$  Then at any given time $t\in(0,T]$, the $\mathrm{L}^2$-energy is given by
\begin{align}\label{int1}
&\int_0^1\int_0^{\infty}|\u(x,y,t,\omega)|^2\d x\d y \geq
\int_0^1\int_M^{\infty}|\u(x,y,t,\omega)|^2\d x\d y\nonumber\\&\sim
\int_0^1\int_M^{\infty}\left|\frac{C_1(y,t,\omega)}{|x|^{\alpha}}+C_2(y,t,\omega)\right|^2\d
x\d y\nonumber\\
&=
 \int_0^1\int_M^{\infty}\frac{|C_1(y,t,\omega)|^2}{|x|^{2\alpha
}}\d x\d y+\int_0^1\int_M^{\infty}|C_2(y,t,\omega)|^2\d x\d
y\nonumber\\&\quad+2\int_0^1\int_M^{\infty}\frac{C_1(y,t,\omega)C_2(y,t,\omega)}{|x|^{\alpha}}\d
x\d y.
\end{align}
The first integral in (\ref{int1}) always has a finite positive
value  and is equal to
$$\frac{1}{(2\alpha-1)M^{2\alpha-1}}\left(\int_0^1|C_1(y,t,\omega)|^2\d
y\right).$$ The second integral diverges to $+\infty$. The last
integral in (\ref{int1}) is also bounded, since
\begin{align*}
&\left|\int_0^1\int_M^{\infty}\frac{C_1(y,t,\omega)C_2(y,t,\omega)}{|x|^{\alpha}}\d
x\d y\right|\nonumber\\&\leq
\int_0^1\int_M^{\infty}\frac{|C_1(y,t,\omega)||C_2(y,t,\omega)|}{|x|^{\alpha}}\d
x\d y\nonumber\\ &\leq
\frac{1}{(\alpha-1)M^{\alpha-1}}\left(\int_0^1|C_1(y,t,\omega)|^2\d
y\right)^{1/2}\left(\int_0^1|C_2(y,t,\omega)|^2\d y\right)^{1/2}.
\end{align*}
Thus for any given time $t>0$, we have
$$\int_0^1\int_0^{\infty}|\u(x,y,t,\omega)|^2\d x\d y=\infty,\mathbb{P}-\textrm{ a. s.}$$

There are extensive literature on deterministic flow through channel
type domains. Interested readers may look into Amick \cite{Am1,
Am2}, Amick and Franenkel \cite{AF}, Babin \cite{Ba1, Ba2}, Borchers
and Pileckas \cite{BoP}, Heywood \cite{He1,He2, He4},
Kapitanski\`{\i} and Piletskas \cite{KaP}, Ladyzhenskaya and
Solonnikov \cite{LS, LS1, LS2}, Pileckas \cite{PlK}, Piletskas
\cite{PKI}, Solonnikov \cite{So1}, Solonnikov and Piletskas
\cite{SoPl}, Sritharan \cite{SS7,SS4} to name a few.

For a sample of literature on stochastic Navier-Stokes equations, we
refer the readers to Bensoussan \cite{Be1}, Bensoussan and Temam
\cite{BT}, Capinski and Cutland \cite{CC}, Da Parto and Zabczyk
\cite{DaZ},  Flandoli and Gatarek \cite{FG},  Menaldi and Sritharan
\cite{MS}, Pardoux \cite{Pa1}, \cite{Pa2},  Sritharan and Sundar
\cite{SS2},   Vishik and Fursikov \cite{VF}, Sritharan \cite{SS9},
Fernando and Sritharan \cite{FS}, Sakthivel and Sritharan
\cite{SaS}.

The plan of the paper is as follows. In section 2, the main result
of this paper and the functional setting have been given. A
divergence free vector field of infinite energy carrying a
nontrivial net flux  through the channel is constructed in section 3
using the solution of the heat equation. In section 4, we
characterize the properties of the linear and bilinear operators
that are associated with the Navier-Stokes problem. A perturbed
vector field is constructed in section 5 using a suitable
transformation involving the constructed basic vector field.
A-priori estimates for the solutions of the perturbed vector field
are obtained in section 6. In section 7, we prove the local
monotonicity condition for the sum of the Stokes and the inertia
operators as well as the existence and uniqueness of strong
solutions to the perturbed vector field by exploiting this local
monotonicity condition. In section 8, we mathematically characterize
the perturbation pressure field using a generalization of the de
Rham's Theorem to processes. section 9, completes the proof of the
main result.

\section{Basic Definitions and the Main Theorem}
In this section, following Sritharan \cite{SS4}, we define the class
of channel domains that will be analyzed.
\begin{definition}(Admissible channel domain)
A simply connected open set $\Theta\subset \mathbb{R}^2$ with
$C^{\infty}$ boundary $\partial \Theta$ consisting of two
disconnected components $\partial\Theta_1$ and $\partial\Theta_2$ is
called an admissible channel domain (see Figure \ref{acd}), if it is
the union of three disjoint sets
$\Theta_0\cup\mathbf{O}_1\cup\mathbf{O}_2$ defined in the following
way. Let $\mathbf{O}_1$ and $\mathbf{O}_2$ be two semi-infinite
strips of width $d_1$ and $d_2$ respectively. These two straight
channels are smoothly (not necessarily coaxially) joined by a
bounded domain $\Theta_0$ such that
$\partial\Theta_1\cup\partial\Theta_2=\partial\Theta\in C^{\infty}.$
\end{definition}
\begin{figure}[h]
\begin{center}
\includegraphics[width=0.5\textwidth]{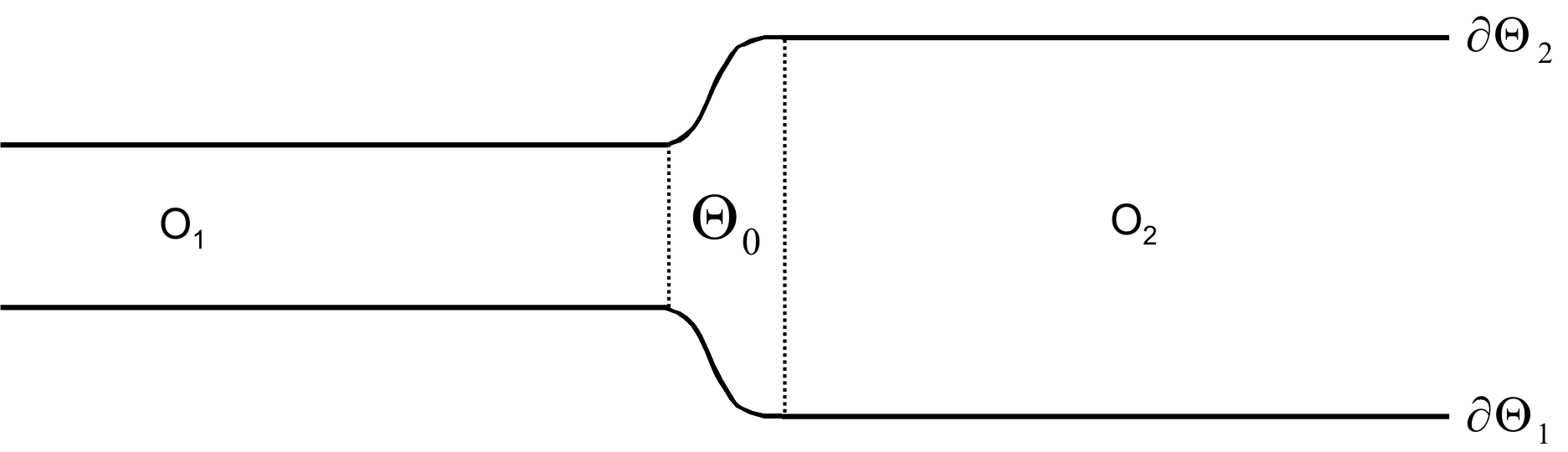}
\end{center}
\caption{admissible channel domain} \label{acd}
\end{figure}
Now let us consider the problem of accelerating a viscous
incompressible fluid from rest to a given stochastic flux rate
through an admissible channel domain. Let
$\left(\Omega,\mathcal{F},\mathcal{F}_t,\mathbb{P}\right)$ be a
complete probability space. The mathematical problem is to find the
velocity field $\u$ and pressure field $p$ such that
$$(\mathbf{u},p):\Theta\times [0,T]\times\Omega\rightarrow \mathbb{R}^2\times
\mathbb{R},$$ the momentum equation
\begin{align}\label{1}
\mathbf{u}_t+\mathbf{u}\cdot\nabla\mathbf{u}=-\nabla
p+\nu\Delta\mathbf{u}+\dot{\mathscr{G}}(\x,t,\omega)\textrm{ in
}\Theta\times (0,T)\times \Omega,
\end{align}
the incompressibility condition
\begin{align}\label{2}
\nabla\cdot \mathbf{u} =0\textrm{ in } \Theta\times
(0,T)\times\Omega,
\end{align}
the non-slip boundary condition on the channel walls
\begin{align}\label{3}
\mathbf{u}(\mathbf{x},t,\omega)=0\textrm{ on } \partial \Theta\times
[0,T]\times\Omega,
\end{align}
the initial condition
\begin{align}\label{4}
\mathbf{u}(\mathbf{x},0,\omega)=0,\;\;(\mathbf{x},\omega)\in\Theta\times\Omega,
\end{align}
and the flux condition
\begin{align}\label{5}
\int_{\Gamma}\mathbf{u}\cdot\mathbf{n}\d
S=\mathscr{F}(t,\omega),\textrm{ for all }t\in [0,T]\textrm{ with
}\mathscr{F}(0,\omega)=0,\;\forall\;\omega\in\Omega,
\end{align}
are satisfied. The properties of the stochastic flux will be
discussed in the later sections. Here $\nu> 0$ is the coefficient of
kinematic viscosity and $\Gamma$ is any cross-sectional curve
cutting the channel.

In this formulation the stochasticity of fluid flow is due to an
external random forcing and the random flux. Also we will assume
that the external random forcing $\dot{\mathscr{G}}(\x,t,\omega)$
and the random flux $\mathscr{F}(t,\omega)$ are mutually independent
processes. Further details about the noise have been provided in the
subsequent sections.

Now let us state the main result of this paper.
\begin{theorem}\label{thm111}
Suppose that the flux rate satisfies the moment bound:
\begin{align}\label{6}
\mathbb{E}\left(\int_0^T\left|\frac{\partial}{\partial
t}\mathscr{F}(t,\omega)\right|^2\d t\right)\leq C_1(T),
\end{align}
for some prescribed $T$. Then for each such
$\mathscr{F}(\cdot,\omega)$ there exits a unique strong solution
$\mathbf{u}(\x,t,\omega)$ with the following estimates:
\begin{align}\label{7}
&\mathbb{E}\left(\sup_{0\leq t\leq
T}\int_{\Theta}\left|(\u-\w)(\x,t,\cdot)\right|^2\d \x+2\nu
\int_0^T\int_{\Theta}|\nabla(\u-\w)(\x,t,\cdot)|^2\d \x\d
t\right)\nonumber\\&\qquad\qquad\qquad\leq C\left(T,\nu,\int_0^T
\mathrm{Tr}(g^*g)(t)\d t\right),
\end{align}
for some divergence free vector field $\w(\x,t,\omega)$ that
vanishes on the boundary $\partial \Theta$ and carries the
prescribed flux
\begin{align}
\int_{\Gamma}\u\cdot\n\d S=\int_{\Gamma}\w\cdot\n\d
S=\mathscr{F}(t,\omega),\;\forall\;\omega\in\Omega.
\end{align}
\end{theorem}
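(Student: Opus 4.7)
The plan is to reduce the infinite-energy problem for $\u$ to a finite-energy problem for the perturbation $\v := \u - \w$, by first lifting the flux with a stochastic basic vector field $\w(\x,t,\omega)$, and then analysing the resulting modified equation for $\v$ by Faedo--Galerkin projection combined with the local-monotonicity arguments of Menaldi and Sritharan \cite{MS}. The first step is the construction of $\w$ announced in Section~3: using the solution of a one-dimensional heat equation in the cross-section variable with time-dependent data driven by $\mathscr{F}(t,\omega)$, and cutting off smoothly inside the junction region $\Theta_0$, one produces a divergence-free field that vanishes on $\partial\Theta$ and realises the prescribed flux $\int_{\Gamma}\w\cdot\n\,\d S = \mathscr{F}(t,\omega)$ across every cross-section $\Gamma$. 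Although $\w$ has infinite $\mathrm{L}^2$-energy, the moment bound \eqref{6} on $\partial_t\mathscr{F}$ propagates to $\mathrm{L}^2$-in-$\x$ control of $\partial_t\w$, $\nabla\w$ and $\Delta\w$; these are precisely the norms that will enter as data for the perturbation equation.

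Substituting $\u = \v + \w$ into \eqref{1}, projecting onto the divergence-free subspace by the Leray projector $\mathbf{P}$, and using Section~4 to identify the Stokes operator $A$ and the inertia operator $B$, one is led to the abstract stochastic evolution equation
\begin{align*}
\d\v + \bigl[\nu A\v + B(\v+\w)\bigr]\,\d t = \bigl[\nu\,\mathbf{P}\Delta\w - \mathbf{P}\w_t\bigr]\d t + g\,\d W, \quad \v(0)=0,
\end{align*}
with zero Dirichlet data and zero flux, which is the object of Sections~5--7. Taking Galerkin projections $\v_n$ in an orthonormal eigenbasis of $A$ and applying It\^o's formula to $\|\v_n\|_{\mathrm{L}^2}^2$, I would split the nonlinear contribution as $\langle B(\v+\w),\v\rangle = \langle B(\v,\w),\v\rangle + \langle B(\w,\w),\v\rangle$, exploiting the antisymmetry $\langle B(\u,\v),\v\rangle=0$ of the two-dimensional trilinear form. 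Combining Ladyzhenskaya's interpolation inequality with the $\mathrm{L}^2$-bounds on $\nabla\w$ and $\Delta\w$, together with the Burkholder--Davis--Gundy inequality and Gronwall's lemma, produces the a priori estimate \eqref{7}.

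Passage to the limit $n\to\infty$ is then carried out via the local monotonicity of $\nu A + B$ on balls $\{\v:\|\v\|_{\mathrm{L}^4}\le N\}$ established in Section~7: a Minty--Browder type identification of the nonlinear limit shows that $\v$ is a strong solution on $[0,\T]$, and uniqueness follows from the same monotonicity inequality combined with a pathwise Gronwall estimate. In two dimensions the a priori estimate controls $\sup_{[0,T]}\|\v\|_{\mathrm{L}^4}$ well enough that $\tau_N\uparrow T$ $\mathbb{P}$-almost surely as $N\to\infty$, yielding a global solution. Reconstructing $\u := \v + \w$ and recovering the pressure $p$ by the stochastic generalisation of de~Rham's theorem carried out in Section~8 then finishes the argument assembled in Section~9.

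The main obstacle will be the treatment of the cross term $\langle B(\v,\w),\v\rangle$ and the forcing $B(\w,\w)$. Since $\w$ does not decay at infinity one cannot treat these by naive $\mathrm{L}^2$-integration; instead one must exploit that outside a compact region $\w$ depends only on the cross-section variable, so that $(\w\cdot\nabla)\w$ has compact support and $\nabla\w$ is uniformly bounded, while the vanishing of $\v$ on $\partial\Theta$ together with the divergence-free condition cancels the remaining boundary contributions when the energy identity is written out. Verifying these cancellations and checking that the resulting bounds are uniform in the Galerkin index and integrable in $\omega$ is the technical heart of Sections~6 and~7.
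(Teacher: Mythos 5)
Your overall architecture coincides with the paper's: construct the flux-carrying basic field $\w$ from the one-dimensional heat equation, pass to the perturbation $\v=\u-\w$, run a Galerkin/It\^o energy estimate, identify the limit by local monotonicity on $\mathrm{L}^4$-balls via a Minty--Browder argument, and recover the pressure by the stochastic de Rham theorem. The decomposition of the nonlinearity is also the same up to bookkeeping (the paper keeps $\w\cdot\nabla\w$ inside the compactly supported forcing $\mathbf{f}_{\w}$ rather than in the operator).

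There is, however, one concrete gap in your a priori estimate step. After It\^o's formula the only surviving nonlinear contribution is $b(\v^n,\w,\v^n)$, which is bounded by $\bigl(\tfrac{\beta_{20}^2(\omega)}{2\nu}+\beta_{21}(t,\omega)+\beta_{22}(t,\omega)\bigr)|\v^n|^2$ plus an absorbable $\tfrac{\nu}{2}\|\v^n\|^2$. The coefficients $\beta_{20},\beta_{21},\beta_{22}$ (the sup norms of $\w$ and $\nabla\w$ over the outlets and $\Theta_0$) are \emph{not} uniformly bounded in $\omega$ -- the construction only gives $\beta_{2i}\in\mathrm{L}^2(\Omega;\mathrm{L}^2(0,T))$ and $\sup_t\beta_{2i}(t,\omega)\le\eta_{2i}(\omega)$ with $\eta_{2i}\in\mathrm{L}^2(\Omega)$ -- so your proposed ``Gronwall then take expectations'' produces a factor $\exp\bigl(C(\omega)T\bigr)$ with $C(\cdot)$ merely square-integrable, whose expectation (and whose product with the random data term) need not be finite. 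The paper closes this by an extra probabilistic device you omit: it uses the standing assumption that the random flux and the random forcing are mutually independent, takes conditional expectation with respect to the $\sigma$-algebra $\mathcal{G}$ generated by $\v^n$, and thereby replaces the random coefficients by their deterministic expectations $\overline{\beta}_{2j}$ before applying Gronwall, so that the exponential factor is a genuine constant. Without this independence-plus-conditioning step (or some substitute, e.g.\ a localization in $\omega$), the estimate \eqref{7} does not follow from the argument as you have sketched it. The same deterministic-coefficient issue reappears in the weight $r(t,\omega)$ used in the monotonicity and uniqueness arguments, where the paper again relies on $\eta_{2i}(\omega)$ being finite pathwise rather than bounded.
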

We prove the above theorem in the subsequent sections. The following
functional frame work is used in this paper.
\begin{align*}
C_0^{\infty}(\Theta) &= \textrm{ the space of all infinitely
differentiable
vector fields with }\\&\textrm{\quad\quad compact support in } \Theta,\\
W_0(\Theta)&=\textrm{ the completion of }
C_0^{\infty}(\Theta)\textrm{ vector fields in the norm } \|\nabla
\varphi\|_{\mathrm{L}^2(\Theta)},\\H^1(\Theta) &=
\left\{\varphi:\Theta\rightarrow\mathbb{R}^2;\;\varphi\in\mathrm{L}^2(\Theta),\;\nabla\varphi\in\mathrm{L}^2(\Theta)\right\}\!\!,\nonumber\\
 H_0^1(\Theta) &=
\left\{\varphi:\Theta\rightarrow\mathbb{R}^2;\;\varphi\in\mathrm{L}^2(\Theta),\;\nabla\varphi\in\mathrm{L}^2(\Theta)\;\mathrm{
and }\; \varphi\big|_{\partial\Theta}=0\right\}\!\!,\nonumber\\
j(\Theta)& =
\left\{\varphi:\Theta\rightarrow\mathbb{R}^2;\;\varphi\in
C_0^{\infty}(\Theta)\;\mathrm{ and }\;\nabla\cdot\varphi
=0\right\}\!.
\end{align*}
Also let us define
\begin{align*}
\mathbf{H}&= \textrm{Completion of } j(\Theta) \textrm{ in the }
\mathrm{L}^2(\Theta) \textrm{ norm},\\
\mathbf{V} &= \textrm{Completion of }
j(\Theta) \textrm{ in the } H^1(\Theta) \textrm{ norm},\\
\mathbf{H}^*&=\left\{\varphi:\Theta\rightarrow
\mathbb{R}^2;\;\varphi\in\mathrm{L}^2(\Theta);\;\nabla\cdot\varphi=0\textrm{
and }\varphi\cdot\n\big|_{\partial\Theta}=0\right\}\!\!,\\
\mathbf{V}^*&=\left\{\varphi:\Theta\rightarrow\mathbb{R}^2;\;\varphi\in
H_0^1(\Theta);\textrm{ and } \nabla\cdot\varphi=0\right\}\!\!,\\
\mathbf{V}_0&=\textrm{completion of } j(\Theta) \textrm{
in the norm of }\|\nabla\varphi\|_{\mathrm{L}^2(\Theta)},\\
\mathbf{V}_0^*&=\left\{\varphi\in
W_0(\Theta);\nabla\cdot\varphi=0\right\}\!.
\end{align*}

Let us denote the norm in $\mathbf{H}$ by $|\cdot|$ and the norm in
$\mathbf{V}$ by $\|\cdot\|$. If we identify $\mathbf{H}$ with its
dual $\mathbf{H}'=\mathscr{L}(\mathbf{H};\mathbb{R})$ using the
Riesz representation theorem, we obtain the continuous and dense
embedding
$$\mathbf{V}\subset \mathbf{H}\equiv\mathbf{H}'\subset\mathbf{V}'.$$
Also let us denote the duality pairing between $\mathbf{V}$ and
$\mathbf{V}'$ by $(\cdot,\cdot)$.  Note, however, that (unlike in
bounded domains) the embedding $\mathbf{V}\subset\mathbf{H}$ is not
compact since $\Theta$ is unbounded. Poincar\'{e} lemma holds for
admissible channel domains (since they have finite cross section):
$$\|\phi\|_{\mathrm{L}^2(\Theta)}\leq
C\|\nabla\phi\|_{\mathrm{L}^2(\Theta)},\quad \forall\phi\in
H^1_0(\Theta)$$ and hence, in $\mathbf{V}$ the norm of $H^1(\Theta)$
is equivalent to that obtained by the Dirichlet integral
$\|\nabla\phi\|_{\mathrm{L}^2(\Theta)}.$

\section{Construction of the Basic Vector Field}\setcounter{equation}{0}
Following the ideas from Sritharan \cite{SS4}, we will construct a
divergence free basic vector field $\w(\x,t,\omega)$ in $\Theta$
which vanishes on $\partial\Theta$ and carries the prescribed random
flux $\mathscr{F}(t,\omega)$ through the channel. Note that this
``constructed" vector field need not satisfy the Navier-Stokes
equations in $\Theta$ although it does in $\mathbf{O}_1$ and
$\mathbf{O}_2$ due to the nature of the construction used. The
method can be described as follows. Using the solution of the
one-dimensional heat equation with random flux
$\mathscr{F}(t,\omega)$, a vector field is constructed in each of
the straight channel outlets $\mathbf{O}_1$ and $\mathbf{O}_2$. We
then smoothly join these two vector fields by constructing a smooth
extension in $\Theta_0$. In $\mathbf{O}_1$ and $\mathbf{O}_2$ the
vector field will have only one component, namely in the direction
of the axes of the outlets. In $\Theta_0$, however, in general both
components of $\w(\x,t,\omega)$ will be nonzero. Let us now consider
one of the outlets, say $\mathbf{O}_2$ and assume for simplicity
that the width of the channel is unity (see Figure \ref{fig2}).

\begin{figure}[h]
\begin{center}
\includegraphics[width=.5\textwidth]{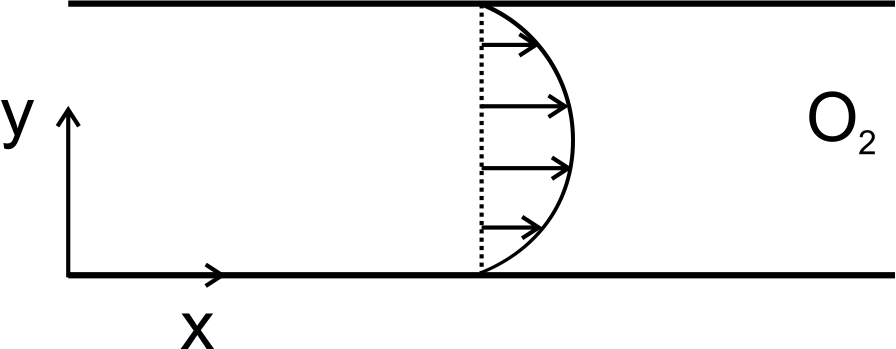}
\end{center}
\caption{The outlet $\mathbf{O}_2$} \label{fig2}
\end{figure}

Let us define the outlet
$\mathbf{O}_2=\left\{(x,y)\in(0,\infty)\times(0,1)\right\}.$ Let us
seek a divergence-free vector field in the form
$\mathbf{w}(x,y,t,\omega)=(w_1(y,t,\omega),0)$ and a scalar field
$P(x,t,\omega)$ in $\mathbf{O}_2$ such that
\begin{align}\label{eqn1}
\frac{\partial}{\partial t}w_1(y,t,\omega)-\nu
\frac{\partial^2}{\partial
y^2}w_1(y,t,\omega)&=-\frac{\partial}{\partial
x}P(x,t,\omega)=f(t,\omega)\nonumber\\&
\quad\textrm{ in }\mathbf{O}_2\times (0,T)\times\Omega,\\
w_1(0,t,\omega)=w_1(1,t,\omega)&=0 \textrm{ for } t\in [0,T],\omega\in\Omega,\\
w_1(y,0,\omega)&=0\textrm{ for }y\in(0,1),\omega\in\Omega\textrm{
and }\label{eqn01}
\end{align}
\begin{equation}\label{eqn2}
\int_0^1w_1(y,t,\omega)\d y=\mathscr{F}(t,\omega)\textrm{ for }
t\in[0,T] \textrm{ with
}\mathscr{F}(0,\omega)=0\;\forall\;\omega\in\Omega.
\end{equation}
Here the function $f(t,\omega)$ needs to be determined from the
prescribed flux $\mathscr{F}(t,\omega)$. To resolve this problem we
first write down the solution of the system
(\ref{eqn1})-(\ref{eqn01}) in terms of $f(t,\omega)$ and then use
the condition (\ref{eqn2}) to evaluate $f(t,\omega)$ in terms of
$\mathscr{F}(t,\omega)$. The solution of (\ref{eqn1})-(\ref{eqn01})
can be obtained by the method of separation of variables (see Cannon
\cite{Ca}). The existence and uniqueness of solution of the boundary
value problem for the heat equation with stochastic boundary
conditions  has been proved in Cahlon \cite{CB}.

\begin{theorem}
Let $f(t,\omega)$ satisfies the following  uniform H\"{o}lder
condition on $[0,T]$, for all $t_1,t_2\in[0,T]$,
\begin{align}\label{en10}
|f(t_1,\omega)-f(t_2,\omega)|\leq
L(\omega)|t_1-t_2|^{\gamma},0<\gamma< \frac{1}{2},
\end{align}
with $L(\omega)>0$ and $L(\cdot)\in\mathrm{L}^2(\Omega)$. Then,
there exists a unique solution $w_1(\cdot,\cdot,\cdot)$ of the
problem (\ref{eqn1})-(\ref{eqn01}) such that
$$w_1(y,t,\omega)\in\mathrm{L}^2\left(\Omega;C([0,1]\times [0,T])\right)$$
and satisfying the following conditions.
\begin{enumerate}
\item [(i)] $w_1(y,t,\omega)$ is a continuous function of $y$ and $t$ in
$\mathrm{L}^2(\Omega)$. i.e.,
\begin{align}
&\mathbb{E}\left[\left(w_1(y,t+h,\omega)-w_1(y,t,\omega)\right)^2\right]\to 0\textrm{ as }h\to 0,\nonumber\\
&\mathbb{E}\left[\left(w_1(y+h,t,\omega)-w_1(y,t,\omega)\right)^2\right]\to
0\textrm{ as }h\to 0.
\end{align}
\item [(ii)] There exists a stochastic function
$w_{1_{t}}(y,t,\omega)\in\mathrm{L}^2(\Omega;C([0,1]\times [0,T]))$
such that
\begin{align}\label{aq1}
\mathbb{E}\left[\left(\frac{w_1(y,t+h,\omega)-w_1(y,t,\omega)}{h}-w_{1_{t}}(y,t,\omega)\right)^2\right]\to
0\textrm{ as }h\to 0.
\end{align}
\item [(iii)] There exists a stochastic function
$w_{1_{yy}}(y,t,\omega)\in\mathrm{L}^2(\Omega;C([0,1]\times [0,T]))$
such that
\begin{align}
&\mathbb{E}\left[\left(\frac{w_1(y+h,t,\omega)+w_1(y-h,t,\omega)-2w_1(y,t,\omega)}{h^2}-w_{1_{yy}}(y,t,\omega)\right)^2\right]\nonumber\\&\to
0\textrm{ as } h\to 0.
\end{align}
\item [(iv)] The equation
$w_{1_{t}}(y,t,\omega)=\nu w_{1_{yy}}(y,t,\omega)+f(t,\omega)$ is
satisfied for a. e. $\omega\in\Omega$.
\end{enumerate}
\end{theorem}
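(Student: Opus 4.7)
The plan is to decouple the stochastic and PDE aspects: solve the problem sample-path-wise on the full-measure set $\{L(\omega) < \infty\}$, then use the integrability $L \in \mathrm{L}^2(\Omega)$ to upgrade pathwise regularity to the stated mean-square statements. Because the source $f(t,\omega)$ depends only on $t$ (not on $y$), for each such $\omega$ the system (\ref{eqn1})--(\ref{eqn01}) is a one-dimensional inhomogeneous heat equation on $(0,1)$ with zero initial/Dirichlet data and a forcing that is H\"older continuous in $t$; the classical deterministic theory (Cannon \cite{Ca}) yields a unique classical solution $w_1(\cdot,\cdot,\omega)$ with $w_{1,t}, w_{1,yy} \in C([0,1]\times[0,T])$ satisfying (iv).

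Explicitly, expansion in the Dirichlet eigenbasis $\phi_n(y) = \sqrt{2}\sin(n\pi y)$ of $-\partial_y^2$ on $(0,1)$ produces the series representation
\begin{equation*}
w_1(y,t,\omega) = \frac{4}{\pi}\sum_{n\ \mathrm{odd}}\frac{\sin(n\pi y)}{n}\int_0^t e^{-\nu(n\pi)^2(t-s)} f(s,\omega)\,\mathrm{d} s,
\end{equation*}
the odd-$n$ restriction reflecting the fact that the Fourier sine coefficients of the constant function $1$ on $(0,1)$ vanish for even $n$. Uniform convergence in $(y,t)$ and termwise $y$-differentiation are immediate from the exponential factor. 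The delicate step is termwise $t$-differentiation, which is handled via the identity
\begin{equation*}
\int_0^t \nu\lambda_n e^{-\nu\lambda_n(t-s)} f(s,\omega)\,\mathrm{d} s = f(t,\omega)\bigl(1 - e^{-\nu\lambda_n t}\bigr) + \int_0^t \nu\lambda_n e^{-\nu\lambda_n(t-s)}\bigl[f(s,\omega) - f(t,\omega)\bigr]\,\mathrm{d} s,
\end{equation*}
with $\lambda_n = (n\pi)^2$; by (\ref{en10}) the remainder is dominated by $C L(\omega)\lambda_n^{-\gamma}\Gamma(\gamma+1)$, which combined with the $1/n$ prefactor gives absolute convergence of the differentiated series uniformly on $[0,1]\times[0,T]$ for each $\omega$ with $L(\omega) < \infty$.

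Uniqueness for fixed $\omega$ is the standard $\mathrm{L}^2(0,1)$ energy identity applied to the difference of two candidate solutions, which must then vanish. The mean-square assertions (i)--(iii) follow by dominated convergence: the pathwise differentiability from the previous step supplies $\omega$-a.s.\ convergence of the difference quotients appearing in the left-hand sides, and the uniform-in-$h$ estimates derived from (\ref{en10}) furnish a dominating random variable of the form $C(T,\nu,\gamma)\bigl(L(\omega) + |f(0,\omega)|\bigr)$, which belongs to $\mathrm{L}^2(\Omega)$ under the hypothesis. Assertion (iv) is automatic since the pathwise classical PDE holds by construction. The principal technical obstacle is thus the termwise $t$-differentiation of the Fourier series, which is exactly where the H\"older hypothesis (\ref{en10}) enters in an essential way; everything else reduces either to classical heat-equation theory at a fixed sample or to routine dominated-convergence arguments.
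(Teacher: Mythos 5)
Your proposal is correct and rests on the same explicit eigenfunction series \eqref{eqn3} as the paper, but it is organized differently and is actually sharper at the one genuinely delicate point. The paper works directly in mean square: it writes $w_1=K*f$ for the kernel $K(y,t)$, establishes uniform convergence of the undifferentiated series by the Weierstrass M-test, and then proves (i)--(iii) by estimating $\mathbb{E}[\,\cdot\,]^2$ using continuity and differentiability of $K$ in $t$ together with $f\in\mathrm{L}^2(\Omega;C[0,T])$; the termwise $t$-differentiation needed for (iv) is carried out via a formal Heaviside/delta-function manipulation and an asserted uniform convergence of the differentiated series. You instead fix $\omega$ on the full-measure set $\{L(\omega)<\infty\}$, produce a classical pathwise solution, and upgrade to the $\mathrm{L}^2(\Omega)$ statements by dominated convergence using $L\in\mathrm{L}^2(\Omega)$. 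The real payoff of your route is the splitting identity $\nu\lambda_n\int_0^t e^{-\nu\lambda_n(t-s)}f(s)\,\d s=f(t)(1-e^{-\nu\lambda_n t})+\nu\lambda_n\int_0^t e^{-\nu\lambda_n(t-s)}[f(s)-f(t)]\,\d s$: the crude bound $\int_0^t(2n+1)e^{-\nu(2n+1)^2\pi^2(t-s)}|f(s)|\,\d s\leq C\|f\|_\infty/(2n+1)$ used implicitly in the paper only gives terms of order $n^{-1}$, which do not sum absolutely, whereas your H\"older remainder of order $n^{-1-2\gamma}$ does, so your argument makes explicit where hypothesis \eqref{en10} enters --- in the paper it is used only to conclude $f\in\mathrm{L}^2(\Omega;C[0,T])$. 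Two small caveats, both of which you share with the paper rather than introduce: the leading part $\sum_n n^{-1}\sin(n\pi y)f(t)e^{-\nu\lambda_n t}$ converges only conditionally at $t=0$ (it is the square-wave series), so ``absolute convergence uniformly on $[0,1]\times[0,T]$'' is slightly overstated near the corner $t=0$, $y\in\{0,1\}$ unless $f(0,\omega)=0$; and your dominating variable involves $|f(0,\omega)|$, whose square-integrability is not literally among the hypotheses, though the paper makes the same implicit assumption when asserting $f\in\mathrm{L}^2(\Omega;C[0,T])$.
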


\begin{proof}
By using the method of separation of variables, we can write down
the solution of (\ref{eqn1})-(\ref{eqn01}) as
\begin{equation}\label{eqn3}
w_1(y,t,\omega)=\frac{4}{\pi}
\sum_{n=0}^{\infty}\left\{\int_0^tf(s,\omega)\frac{1}{2n+1}e^{(-\nu(2n+1)^2\pi^2(t-s))}\d
s\right\} \sin(2n+1)\pi y.
\end{equation}

We now use  the Weierstrass' M-test to  show that the  infinite
series solution (\ref{eqn3}) to the Heat Equation
(\ref{eqn1})-(\ref{eqn01}) converges uniformly. Let us take
$$a_n(y,t,\omega)=
\int_0^tf(s,\omega)\frac{1}{2n+1}e^{(-\nu(2n+1)^2\pi^2(t-s))}\d s
\sin(2n+1)\pi y,\textrm{ for }t>0.$$ Then by using H\"{o}lder's
inequality, for $t\in(0,T]$ and $y\in  [0,1]$, we have
\begin{align}\label{1a1}
|a_n(y,t,\omega)|&\leq
\int_0^t|f(s,\omega)|\frac{1}{2n+1}e^{(-\nu(2n+1)^2\pi^2(t-s))}\d
s\nonumber\\&\leq
\frac{1}{(2n+1)}e^{-\nu(2n+1)^2\pi^2t}\left(\int_0^t|f(s,\omega)|^2\d
s\right)^{1/2}\left(\int_0^te^{2\nu(2n+1)^2\pi^2s}\d
s\right)^{1/2}\nonumber\\&\leq
\|f(\cdot,\omega)\|_{\mathrm{L}^2(0,T)}\frac{1}{(2n+1)}e^{-\nu(2n+1)^2\pi^2t}\left(\frac{e^{2\nu(2n+1)^2\pi^2t}-1}{2\nu(2n+1)^2\pi^2}\right)^{1/2}\nonumber\\
&\leq
\|f(\cdot,\omega)\|_{\mathrm{L}^2(0,T)}\frac{1}{(2n+1)^2\pi\sqrt{2\nu}},\quad\mathbb{P}-\textrm{
a. s. }
\end{align}
Hence, we get $$|a_n(y,t,\omega)|\leq M_n\textrm{ where
}M_n=\|f(\cdot,\omega)\|_{\mathrm{L}^2(0,T)}\frac{1}{(2n+1)^2\pi\sqrt{2\nu}},\mathbb{P}-\textrm{
a. s.}$$ Since
$\sum_{n=0}^{\infty}\frac{1}{(2n+1)^2}=\frac{\pi^2}{8}$ (from
Apostol \cite{TMA}, we have
$\sum_{n=1}^{\infty}\frac{1}{n^2}=\frac{\pi^2}{6}$, then
$\sum_{n=1}^{\infty}\frac{1}{(2n)^2}+\sum_{n=0}^{\infty}\frac{1}{(2n+1)^2}=\frac{\pi^2}{6}$
gives $\sum_{n=0}^{\infty}\frac{1}{(2n+1)^2}=\frac{\pi^2}{8}$), we
have
$\sum_{n=0}^{\infty}M_n=\frac{\pi}{8\sqrt{2\nu}}\|f(\cdot,\omega)\|_{\mathrm{L}^2(0,T)}$,
$\mathbb{P}$- a. s., for all $t\in (0,T]$. Hence by the Weierstrass'
M-test,
$w_1(y,t,\omega)=\frac{4}{\pi}\sum_{n=0}^{\infty}a_n(y,t,\omega)$
converges absolutely and uniformly for $y\in [0, 1]$ and $t>0$. For
$t=0$ also the convergence is uniform since $w_1(y,0,\omega)=0$.

One can re-write (\ref{eqn3}) as
\begin{equation}
w_1(y,t,\omega)=\int_0^tK(y,t-s)f(s,\omega)\d s=(K*f)(t),
\end{equation}
where
$$K(y,t)=\frac{4}{\pi}\sum_{n=0}^{\infty}\frac{1}{(2n+1)}e^{-\nu(2n+1)^2\pi^2t}\sin(2n+1)\pi
y.$$ From (\ref{en10}), we have
\begin{align*}
\mathbb{E}\left[|f(t_1,\omega)-f(t_2,\omega)|^2\right]&\leq
\mathbb{E}\left[L(\omega)|t_1-t_2|^{\gamma}\right]^2\nonumber\\&=\mathbb{E}|L(\omega)|^2|t_1-t_2|^{2\gamma}
\to 0 \textrm{ as } t_1\to t_2,
\end{align*}
for all $t_1,t_2\in[0,T]$, which gives
$f(\cdot,\cdot)\in\mathrm{L}^2(\Omega;C[0,T]).$  We can prove that
the solution (\ref{eqn3}) of the problem (\ref{eqn1})-(\ref{eqn01})
satisfies $w_1(y,t,\omega)\in\mathrm{L}^2\left(\Omega;C([0,1]\times
[0,T])\right)$ (see Cahlon \cite{CB} and Walsh \cite{Wa} for
existence and uniqueness).

Let us now prove part (iv) of the theorem. For $t>0$, we have
\begin{align}\label{en11}
\frac{\partial}{\partial
t}(w_1(y,t,\omega))&=\frac{\partial}{\partial
t}\left[\int_0^tK(y,t-s)f(s,\omega)\d s\right]\nonumber\\
&=\frac{\partial}{\partial
t}\left[\int_0^{\infty}H(t-s)K(y,t-s)f(s,\omega)\d
s\right]\nonumber\\&=\int_0^{\infty}\left[H(t-s)\frac{\partial}{\partial
t}K(y,t-s)+K(y,t-s)\delta(t-s)\right]f(s,\omega)\d
s\nonumber\\&=\int_0^t\frac{\partial}{\partial
t}K(y,t-s)f(s,\omega)\d
s+\frac{4}{\pi}\sum_{n=0}^{\infty}\frac{\sin(2n+1)\pi
y}{(2n+1)}f(t,\omega)\nonumber\\
&=\int_0^t\frac{\partial}{\partial t}K(y,t-s)f(s,\omega)\d
s+f(t,\omega),
\end{align}
where $H(\cdot)$ is the Heaviside function. The last term is
obtained by using the property of $\delta$-function and using the
formula $\sum_{n=0}^{\infty}\frac{\sin(2n+1)\pi
y}{(2n+1)}=\frac{\pi}{4}$, for $y\in(0,1)$. By a direct calculation,
as $K(\cdot,\cdot)$ is uniformly convergent in time $t>0$ for a
fixed $y\in(0,1)$, it is easy to see that
$$\frac{\partial}{\partial
t}K(y,t-s)=-4\nu\pi\sum_{n=0}^{\infty}(2n+1)e^{-\nu(2n+1)^2\pi^2(t-s)}\sin(2n+1)\pi
y.$$ From (\ref{en11}), using the uniform convergence of the above
series in time $t>0$ for a fixed $y\in(0,1)$, we have
\begin{align}\label{en12}
\frac{\partial}{\partial
t}(w_1(y,t,\omega))&=-4\nu\pi\sum_{n=0}^{\infty}\left\{\int_0^t\left[(2n+1)e^{-\nu(2n+1)^2\pi^2(t-s)}f(s,\omega)\right]\d
s\times\right. \nonumber\\&\quad\sin(2n+1)\pi y\Big\}+f(t,\omega).
\end{align}
Next, let us calculate $\frac{\partial^2}{\partial
y^2}w_1(y,t,\omega)$. We have
\begin{align}\label{en13}
\frac{\partial^2}{\partial
y^2}w_1(y,t,\omega)=\int_0^t\frac{\partial^2}{\partial
y^2}[K(y,t-s)]f(s,\omega)\d s.
\end{align}
Since $K(\cdot,\cdot)$ is uniformly convergent in $y$ and
$\frac{\partial K}{\partial t}(\cdot,\cdot)$ is also uniformly
convergent in $y$ for a fixed $t>0$, by a simple calculation, we
have
$$\frac{\partial^2}{\partial
y^2}[K(y,t-s)]=-4\pi\sum_{n=0}^{\infty}(2n+1)e^{-\nu(2n+1)^2\pi^2(t-s)}\sin(2n+1)\pi
y.$$ The above series is uniformly convergent in time $t>0$ for a
fixed $y\in(0,1)$ and hence from (\ref{en13}), we get
\begin{align}\label{en14}
\frac{\partial^2w_1}{\partial
y^2}=-4\pi\sum_{n=0}^{\infty}\int_0^t(2n+1)e^{-\nu(2n+1)^2\pi^2(t-s)}f(s,\omega)\sin(2n+1)\pi
y\d s.
\end{align}
The above integral is well defined and from equations (\ref{en12})
and (\ref{en14}), we have part (iv) of the theorem.

Now we prove part (i) of the theorem. Since $K$ is continuous in
time $t$, for any given $\e>0$, there exists a $\eta>0$ such that
$$\left|K(y,t_1)-K(y,t_2)\right|<\e\textrm{ whenever
}|t_1-t_2|<\eta,$$ $t_1,t_2\in (0,T]$. For
$f(\cdot,\cdot)\in\mathrm{L}^2(\Omega;C[0,T])$, by choosing
$|h|<\eta$, let us consider
$\mathbb{E}\left[w_1(y,t+h,\omega)-w_1(y,t,\omega)\right]^2$ and by
using Young's inequality and continuity of $K(y,t)$ in $t$, we have
\begin{align*}
&\mathbb{E}\left[w_1(y,t+h,\omega)-w_1(y,t,\omega)\right]^2\nonumber\\
&=\mathbb{E}\left(\int_0^{t+h}K(y,t+h-s)f(s,\omega)\d
s-\int_0^tK(y,t-s)f(s,\omega)\d s\right)^2\nonumber\\&
\leq2\mathbb{E}\left(\int_0^t\left|K(y,t+h-s)-K(y,t-s)\right|^2|f(s,\omega)|^2\d
s\right)\nonumber\\&\quad+2\mathbb{E}\left(\int_t^{t+h}|K(y,t+h-s)|^2|f(s,\omega)|^2\d
s\right)\nonumber\\&\leq2\mathbb{E}\left[\sup_{0\leq s\leq
t}|f(s,\omega)|^2\right]\int_0^t|K(y,t+h-s)-K(y,t-s)|^2\d
s\nonumber\\&\quad+2\mathbb{E}\left[\sup_{0\leq s\leq
t}|f(s,\omega)|^2\right]\int_t^{t+h}|K(y,t+h-s)|^2\d s
\nonumber\\&\leq
2\|f\|^2_{\mathrm{L}^2(\Omega;C[0,T])}\left(\e^2T+\int_t^{t+h}|K(y,t+h-s)|^2\d
s\right)\to 0\textrm{ as }h\to 0,
\end{align*}
since $\e$ is arbitrary and as $h\to 0$, $t\to s$ and $K(y,t+h-s)\to
K(y,0)=1$ for all $y\in(0,1)$. Similarly, $
\mathbb{E}(w_1(y+h,t,\omega)-w_1(y,t,\omega))^2\to 0\textrm{ as
}h\to 0. $

Let us now prove part (ii) of the theorem. Since for a fixed
$y\in(0,1)$, $K(y,t)$ is uniformly convergent in time $t>0$ and its
derivative $\frac{\partial K}{\partial t}$ exists and also is
uniformly convergent for  $t>0$, we have for a given $\e>0$, there
exists an $\eta>0$ such that
$$\left|\frac{K(y,t+h)-K(y,t)}{h}-\frac{\partial}{\partial t}K(y,t)\right|<\e\textrm{ when ever }|h|<\eta.$$ To prove
(\ref{aq1}), let us use the differentiability of $K(y,t)$ in time
$t$. For $|h|<\eta$, we have
\begin{align*}
&\mathbb{E}\left[\left(\frac{w_1(y,t+h,\omega)-w_1(y,t,\omega)}{h}-w_{1_{t}}(y,t,\omega)\right)^2\right]
\nonumber\\&=\mathbb{E}\left[\int_0^t\left(\frac{K(y,t+h-s)-K(y,t-s)}{h}\right)f(s,\omega)\d
s\right.\nonumber\\&\left.\quad+\frac{1}{h}\int_t^{t+h}K(y,t+h-s)f(s,\omega)\d
s-\int_0^t\frac{\partial}{\partial t}K(y,t-s)f(s,\omega)\d
s-f(t,\omega)\right]^2\nonumber\\&\leq 2\mathbb{E}\left[\sup_{0\leq
s\leq
t}|f(s,\omega)|^2\right]\int_0^t\left|\frac{K(y,t+h-s)-K(y,t-s)}{h}-\frac{\partial}{\partial
t}K(y,t-s)\right|^2\d
s\nonumber\\&\quad+2\mathbb{E}\left(\frac{1}{h}\int_t^{t+h}\left[K(y,t+h-s)f(s,\omega)-f(t,\omega)\right]\d
s\right)^2\nonumber\\&\leq 2
\|f\|^2_{\mathrm{L}^2(\Omega;C[0,T])}\e^2T+2\mathbb{E}\left(\frac{1}{h}\int_t^{t+h}\left[K(y,t+h-s)f(s,\omega)-f(t,\omega)\right]\d
s\right)^2.
\end{align*}
Note that in the time interval $[t, t+h]$ as $h \to 0$, $s \to t$
and hence the function $K(y, t+h-s) f(s, \omega) \to  K(y, 0) f(t,
\omega) = f(t, \omega)$. Thus by the Lebesgue's differentiation
theorem (Theorem 6, Appendix E.4 of Evans \cite{LCE}), the last term
of the right hand side of the above inequality goes to $0$ as $h\to
0$. Finally, since $\e> 0$ is arbitrary, we have the desired result
(\ref{aq1}). Similarly one can prove that there exists a stochastic
function $w_{1_{yy}}(y,t,\omega)\in\mathrm{L}^2(\Omega;C([0,1]\times
[0,T]))$ such that
\begin{align*}
&\mathbb{E}\left[\left(\frac{w_1(y+h,t,\omega)+w_1(y-h,t,\omega)-2w_1(y,t,\omega)}{h^2}-w_{1_{yy}}(y,t,\omega)\right)^2\right]\nonumber\\&\to
0\textrm{ as } h\to 0.
\end{align*}
\end{proof}

\begin{corollary}
For $f(\cdot,\cdot)\in\mathrm{L}^2(\Omega;C[0,T])$, there exists a
function $\frac{\partial}{\partial
t}\mathscr{F}(\cdot,\cdot)\in\mathrm{L}^2(\Omega;C[0,T])$ such that
\begin{align}\label{en21a} \frac{\partial}{\partial
t}\mathscr{F}(t,\omega)=f(t,\omega)+\int_0^t\frac{\partial}{\partial
t}\mathfrak{h}(t-\tau)f(\tau,\omega)\d \tau,
\end{align}
where $
\mathfrak{h}(t)=\frac{8}{\pi^2}\sum_{n=0}^\infty\frac{1}{(2n+1)^2}
\exp(-\nu(2n+1)^2\pi^2t). $
\end{corollary}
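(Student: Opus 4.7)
The plan is to express $\mathscr{F}(t,\omega)=\int_0^1 w_1(y,t,\omega)\,\d y$ as a convolution of $\mathfrak{h}$ with $f$, and then differentiate that convolution in $t$.

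First I would integrate the series representation (\ref{eqn3}) term by term in $y$ over $[0,1]$. The termwise integration is legitimate by the uniform convergence already established in the proof of the previous theorem via the Weierstrass $M$-test. Using $\int_0^1 \sin(2n+1)\pi y\,\d y = \frac{2}{(2n+1)\pi}$, the flux becomes
\begin{equation*}
\mathscr{F}(t,\omega)= \frac{8}{\pi^2}\sum_{n=0}^{\infty}\frac{1}{(2n+1)^2}\int_0^t e^{-\nu(2n+1)^2\pi^2(t-s)} f(s,\omega)\,\d s = \int_0^t \mathfrak{h}(t-s)\,f(s,\omega)\,\d s,
\end{equation*}
which is precisely the convolution $\mathfrak{h}*f$ and automatically satisfies $\mathscr{F}(0,\omega)=0$.

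Next I would differentiate this convolution in $t$, mimicking the Heaviside/delta-function argument that produced (\ref{en11}): writing $\mathscr{F}(t,\omega)=\int_0^{\infty} H(t-s)\mathfrak{h}(t-s)f(s,\omega)\,\d s$ and pushing $\partial_t$ inside yields
\begin{equation*}
\frac{\partial}{\partial t}\mathscr{F}(t,\omega) = \mathfrak{h}(0)\, f(t,\omega) + \int_0^t \frac{\partial}{\partial t}\mathfrak{h}(t-s)\,f(s,\omega)\,\d s.
\end{equation*}
The crucial normalization
\begin{equation*}
\mathfrak{h}(0)=\frac{8}{\pi^2}\sum_{n=0}^{\infty}\frac{1}{(2n+1)^2}=\frac{8}{\pi^2}\cdot\frac{\pi^2}{8}=1
\end{equation*}
then produces exactly the structure asserted in (\ref{en21a}).

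Finally I would verify $\partial_t\mathscr{F} \in \mathrm{L}^2(\Omega;C[0,T])$. A termwise differentiation of $\mathfrak{h}$, justified on compact subsets of $(0,T]$ by uniform convergence of the resulting series, gives $\frac{\partial}{\partial t}\mathfrak{h}(t) = -8\nu\sum_{n=0}^{\infty} e^{-\nu(2n+1)^2\pi^2 t}$, and a direct bound shows
\begin{equation*}
\int_0^T \left|\frac{\partial}{\partial t}\mathfrak{h}(t)\right|\d t \leq 8\nu\sum_{n=0}^{\infty}\frac{1}{\nu(2n+1)^2\pi^2} = 1.
\end{equation*}
Young's convolution inequality then yields $|\partial_t\mathscr{F}(t,\omega)|\leq 2\sup_{s\in[0,T]}|f(s,\omega)|$, so that $\|\partial_t\mathscr{F}\|_{\mathrm{L}^2(\Omega;C[0,T])}\leq 2\|f\|_{\mathrm{L}^2(\Omega;C[0,T])}$, and continuity in $t$ follows from continuity of $f$ combined with the $\mathrm{L}^1$-in-$t$ kernel. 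The main technical point is the mild blow-up of $\partial_t\mathfrak{h}$ at $t=0$; however, since it is only an integrable singularity with $\mathrm{L}^1$-norm bounded uniformly by $1$, the convolution against $f$ remains continuous on $[0,T]$ and the argument goes through.
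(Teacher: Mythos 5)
Your proposal is correct and follows essentially the same route as the paper: integrate the series for $w_1$ over $y\in[0,1]$ to obtain $\mathscr{F}=\mathfrak{h}*f$ with $\mathfrak{h}(0)=1$, then differentiate the convolution to produce the stated identity. The only cosmetic difference is that you justify the differentiation via the Heaviside/delta computation together with the bound $\|\partial_t\mathfrak{h}\|_{\mathrm{L}^1(0,T)}\leq 1$ (a bound the paper itself records later as $\rho<1$), whereas the paper verifies convergence of the difference quotients in $\mathrm{L}^2(\Omega)$ directly using the differentiability of $\mathfrak{h}$ and Lebesgue's differentiation theorem.
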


\begin{proof}
From \eqref{eqn2}, we have
\begin{align}\label{eqn4}
\mathscr{F}(t,\omega)&=\frac{8}{\pi^2}\sum_{n=0}^{\infty}\left\{\int_0^tf(\tau,\omega)\frac{1}{(2n+1)^2}e^{(-\nu(2n+1)^2\pi^2(t-\tau))}\d
\tau\right\}\nonumber\\&=\int_0^t\mathfrak{h}(t-\tau)f(\tau,\omega)\d
\tau,
\end{align}
where
\begin{equation}\label{eqn5}
\mathfrak{h}(t)=\frac{8}{\pi^2}\sum_{n=0}^\infty\frac{1}{(2n+1)^2}
\exp(-\nu(2n+1)^2\pi^2t).
\end{equation}
Note that since $\mathfrak{h}(t)$ is a decreasing function of $t$,
we have for all $t>0,\;\mathfrak{h}(t)\leq
\mathfrak{h}(0)=\frac{8}{\pi^2}\sum_{n=0}^\infty\frac{1}{(2n+1)^2}=1$.
Also the series $\mathfrak{h}(t)$ is convergent. Then there exists a
function $\frac{\partial}{\partial
t}\mathscr{F}(t,\omega)\in\mathrm{L}^2(\Omega;C[0,T])$ such that
\begin{align}\label{en21}
\frac{\partial}{\partial
t}\mathscr{F}(t,\omega)=f(t,\omega)+\int_0^t\frac{\partial}{\partial
t}\mathfrak{h}(t-\tau)f(\tau,\omega)\d \tau,
\end{align}
where $\frac{\partial}{\partial
t}\mathfrak{h}(t)=-8\nu\sum_{n=0}^{\infty}e^{-\nu(2n+1)^2\pi^2t}$.
Since $\mathfrak{h}$ is differentiable in time $t>0$, for any given
$\e>0$, there exists an $\eta>0$ such that
$$\left|\frac{\mathfrak{h}(t+h)-\mathfrak{h}(t)}{h}-\frac{\partial}{\partial t}\mathfrak{h}(t)\right|\leq
\e\textrm{ when ever }|h|<\eta.$$

For proving the existence of $\frac{\partial}{\partial
t}\mathscr{F}(t,\omega)$, let us use
$f\in\mathrm{L}^2(\Omega;C[0,T])$, the differentiability of
$\mathfrak{h}$ and choose $|h|<\eta$ to get
\begin{align*}
&\mathbb{E}\left[\frac{\mathscr{F}(t+h,\omega)-\mathscr{F}(t,\omega)}{h}-\frac{\partial}{\partial
t}\mathscr{F}(t,\omega)\right]^2
\nonumber\\
&\leq2\mathbb{E}\left[\int_0^t\left(\frac{\mathfrak{h}(t+h-\tau)-\mathfrak{h}(t-\tau)}{h}-\frac{\partial}{\partial
t}\mathfrak{h}(t-\tau)\right)f(\tau,\omega)\d\tau\right]^2\nonumber\\&
\quad+2\mathbb{E}\left[\frac{1}{h}\int_t^{t+h}\left[\mathfrak{h}(t+h-\tau)f(\tau,\omega)-f(t,\omega)\right]\d\tau\right]^2\nonumber\\
&\leq 2\mathbb{E}\left[\sup_{0\leq\tau\leq
t}|f(\tau,\omega)|^2\right]\int_0^t\left|\frac{\mathfrak{h}(t+h-\tau)-\mathfrak{h}(t-\tau)}{h}-\frac{\partial}{\partial
t}\mathfrak{h}(t-\tau)\right|^2\d
\tau\nonumber\\
&\quad+2\mathbb{E}\left[\frac{1}{h}\int_t^{t+h}\left[\mathfrak{h}(t+h-\tau)f(\tau,\omega)-f(t,\omega)\right]\d\tau\right]^2\nonumber\\
&\leq 2\|f\|^2_{\mathrm{L}^2(\Omega;C[0,T])}\e^2T+
2\mathbb{E}\left[\frac{1}{h}\int_t^{t+h}\left[\mathfrak{h}(t+h-\tau)f(\tau,\omega)-f(t,\omega)\right]\d\tau\right]^2.
\end{align*}
Note that in the time interval $[t, t+h]$ as $h \to 0$, $s \to t$
and hence the function $\mathfrak{h}(t+h-\tau) f(\tau, \omega) \to
 f(t, \omega)$. Thus by the Lebesgue's differentiation
theorem (Theorem 6, Appendix E.4 of Evans \cite{LCE}), the last term
of the right hand side of the above inequality goes to $0$ as $h\to
0$. The arbitrariness of $\e$ gives the required result.
\end{proof}

We have
\begin{align}\label{en21}
\frac{\partial}{\partial
t}\mathscr{F}(t,\omega)=f(t,\omega)+\int_0^t\frac{\partial}{\partial
t}\mathfrak{h}(t-\tau)f(\tau,\omega)\d \tau,
\end{align}
where $$\frac{\partial}{\partial
t}\mathfrak{h}(t)=-8\nu\sum_{n=0}^{\infty}e^{-\nu(2n+1)^2\pi^2t}.$$
From (\ref{en21}), denoting $\frac{\partial}{\partial
t}\mathfrak{h}(t-\tau)$ as $\mathcal{H}(t-\tau)$, we obtain the
Volterra integral equation of the second kind for the determination
of $f(\cdot,\cdot)$ in terms of $\frac{\partial}{\partial
t}\mathscr{F}(t,\omega)$,
\begin{align}\label{en21a}
f(t,\omega)=\frac{\partial}{\partial
t}\mathscr{F}(t,\omega)-\int_0^t\mathcal{H}(t-\tau)f(\tau,\omega)\d
\tau.
\end{align}
\begin{lemma}\label{lemma100}
Let $\frac{\partial}{\partial
t}\mathscr{F}(\cdot,\cdot)\in\mathrm{L}^2(\Omega;\mathrm{L}^2(0,T))$
be given. Then there exists a unique solution
$f(\cdot,\cdot)\in\mathrm{L}^2(\Omega;\mathrm{L}^2(0,T))$ for the
integral equation (\ref{en21a}) in the form
\begin{equation}\label{en21b}
f(t,\omega)=\sum_{n=0}^{\infty}\left[\mathscr{K}^n\frac{\partial\mathscr{F}}{\partial
t}\right](t,\omega),
\end{equation}
where $\mathscr{K}$ is given by
\begin{equation}\label{en21c}
\left[\mathscr{K}\psi\right](t,\omega)=-\int_0^t\mathcal{H}(t-\tau)\psi(\tau,\omega)\d\tau=-(\mathcal{H}*\psi)(t,\omega).
\end{equation}
\end{lemma}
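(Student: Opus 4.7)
I would recognize \eqref{en21a} as a linear Volterra integral equation of the second kind and attack it via the Banach fixed-point theorem on the Hilbert space $\mathrm{L}^2(\Omega;\mathrm{L}^2(0,T))$. This will simultaneously give existence, uniqueness, and the Neumann-series representation \eqref{en21b}.

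\emph{Step 1: Integrability of the kernel with a sub-unit $L^1$ bound.} The decisive calculation is to bound $\mathcal{H}=\partial_t\mathfrak{h}$ in $L^1(0,T)$. From \eqref{eqn5} the function $\mathfrak{h}$ is $C^1$ on $(0,\infty)$, positive and strictly decreasing, with $\mathfrak{h}(0)=\tfrac{8}{\pi^2}\sum_{n=0}^\infty (2n+1)^{-2}=1$ as already noted in the paper. Hence $\mathcal{H}(t)\le 0$ and the fundamental theorem of calculus gives
\[
\|\mathcal{H}\|_{L^1(0,T)} \;=\; -\!\int_0^T\mathfrak{h}'(t)\,dt \;=\; \mathfrak{h}(0)-\mathfrak{h}(T) \;=\; 1-\mathfrak{h}(T) \;=:\; M_T \;<\; 1 .
\]
This is the key quantitative fact: the $L^1$ norm of the kernel on any finite interval is automatically strictly less than one, with no need to choose $T$ small or to introduce a weighted norm.

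\emph{Step 2: $\mathscr{K}$ is a strict contraction.} From \eqref{en21c}, for each fixed $\omega$ the map $\psi(\cdot,\omega)\mapsto(\mathscr{K}\psi)(\cdot,\omega)$ is (up to a sign) the causal convolution with $\mathcal{H}$, so Young's convolution inequality yields
\[
\|(\mathscr{K}\psi)(\cdot,\omega)\|_{\mathrm{L}^2(0,T)} \;\le\; \|\mathcal{H}\|_{L^1(0,T)} \|\psi(\cdot,\omega)\|_{\mathrm{L}^2(0,T)} \;\le\; M_T\,\|\psi(\cdot,\omega)\|_{\mathrm{L}^2(0,T)} .
\]
Squaring and integrating over $\Omega$ shows $\mathscr{K}$ is a bounded linear operator on $\mathrm{L}^2(\Omega;\mathrm{L}^2(0,T))$ with operator norm at most $M_T<1$. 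Iterating gives $\|\mathscr{K}^n\|\le M_T^{\,n}$.

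\emph{Step 3: Neumann series and conclusion.} Since $\|\mathscr{K}\|<1$, the operator $I-\mathscr{K}$ is invertible on $\mathrm{L}^2(\Omega;\mathrm{L}^2(0,T))$ with inverse given by the absolutely convergent geometric series $(I-\mathscr{K})^{-1}=\sum_{n=0}^{\infty}\mathscr{K}^n$ (majorized by $\sum M_T^n=(1-M_T)^{-1}$). Applying this to $\partial_t\mathscr{F}\in\mathrm{L}^2(\Omega;\mathrm{L}^2(0,T))$, which is in the hypothesis of the lemma, produces a function $f\in\mathrm{L}^2(\Omega;\mathrm{L}^2(0,T))$ solving $(I-\mathscr{K})f=\partial_t\mathscr{F}$, i.e.\ \eqref{en21a}, and given explicitly by \eqref{en21b}. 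Uniqueness follows from the injectivity of $I-\mathscr{K}$: any two solutions differ by an element of $\ker(I-\mathscr{K})$, which must be zero since $\mathscr{K}$ is a contraction.

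\emph{Expected obstacle.} There is no substantive obstacle; the only genuine idea needed is the telescoping identity $\|\mathcal{H}\|_{L^1(0,T)}=1-\mathfrak{h}(T)$, which hands us a contraction for free on \emph{any} finite interval. The randomness plays no active role, because $\mathscr{K}$ acts only in the time variable, so the passage from $L^2(0,T)$ bounds to $\mathrm{L}^2(\Omega;\mathrm{L}^2(0,T))$ bounds is merely Tonelli. (Had the kernel not given a contraction, one would fall back on the standard Volterra factorial decay $\|\mathscr{K}^n\|\lesssim (\|\mathcal{H}\|_{L^1}T)^n/n!$ via iterated convolution estimates, which still forces absolute convergence of the series, but the present setup avoids this detour entirely.)
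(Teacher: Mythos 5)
Your proposal is correct and follows essentially the same route as the paper: both arguments rest on the bound $\|\mathcal{H}\|_{\mathrm{L}^1(0,T)}=\mathfrak{h}(0)-\mathfrak{h}(T)=1-\mathfrak{h}(T)<1$ (using that $\mathcal{H}=\partial_t\mathfrak{h}\le 0$), Young's inequality for convolutions to make $\mathscr{K}$ a strict contraction on $\mathrm{L}^2(\Omega;\mathrm{L}^2(0,T))$, and the resulting geometric series for $(I-\mathscr{K})^{-1}$. The paper phrases the last step as Picard iteration plus the Banach fixed-point theorem rather than as a Neumann series, but that is only a cosmetic difference.
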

\begin{proof}
Let us begin with the convolution (\ref{en21c}) and apply Young's
inequality for convolutions to obtain
\begin{align}\label{en21d}
\left\|\mathscr{K}\psi(\cdot,\cdot)\right\|^2_{\mathrm{L}^2(\Omega;\mathrm{L}^2(0,T))}
&=\mathbb{E}\left(\int_0^T\left|-\int_0^t\mathcal{H}(t-\tau)\psi(\tau,\omega)\d\tau\right|^2\d
t\right)\nonumber\\&=\mathbb{E}\left(\|(\mathcal{H}*\psi)(\cdot,\cdot)\|_{\mathrm{L}^2(0,T)}^2\right)
\nonumber\\&\leq
\mathbb{E}\left(\|(\mathcal{H}(\cdot)\|^2_{\mathrm{L}^1(0,T)}\|\psi(\cdot,\cdot)\|_{\mathrm{L}^2(0,T)}^2\right)
\nonumber\\&
=\left\|\mathcal{H}(\cdot)\right\|^2_{\mathrm{L}^1(0,T)}\mathbb{E}\left(\left\|\psi(\cdot,\cdot)\right\|_{\mathrm{L}^2(0,T)}^2\right)\nonumber\\
&=\left\|\mathcal{H}(\cdot)\right\|^2_{\mathrm{L}^1(0,T)}\left\|\psi(\cdot,\cdot)\right\|^2_{\mathrm{L}^2(\Omega;\mathrm{L}^2(0,T))}
\end{align}
Let us define
$$\rho=\left\|\mathcal{H}(\cdot)\right\|_{\mathrm{L}^1(0,T)}\leq
\mathfrak{h}(0)-\mathfrak{h}(T)=1-\mathfrak{h}(T)<1.$$ Hence from
(\ref{en21d}), we get \begin{align}\label{en21e}
\left\|\mathscr{K}\psi(\cdot,\cdot)\right\|_{\mathrm{L}^2(\Omega;\mathrm{L}^2(0,T))}
<
\rho\left\|\psi\right\|_{\mathrm{L}^2(\Omega;\mathrm{L}^2(0,T))},\;\forall\;\psi\in\mathrm{L}^2(\Omega;\mathrm{L}^2(0,T)).
\end{align}
Let us define the map $\mathscr{J}$ to be
\begin{align}
\left[\mathscr{J}f\right](t,\omega)=\frac{\partial}{\partial
t}\mathscr{F}(t,\omega)+\left[\mathscr{K}f\right](t,\omega).
\end{align}
Let us denote the approximate solutions of (\ref{en21a}) as
$f_0,f_1,\cdots,f_n$ and they are given by
\begin{align}\label{en21f}
f_0(t,\omega)&=\frac{\partial}{\partial t}\mathscr{F}(t,\omega),\nonumber\\
f_1(t,\omega)&= \frac{\partial}{\partial
t}\mathscr{F}(t,\omega)-\int_0^t\mathcal{H}(t-\tau)f_0(\tau,\omega)\d\tau\nonumber\\&=\frac{\partial}{\partial
t}\mathscr{F}(t,\omega)+\left[\mathcal{K}f_0\right](t,\omega)
=\left[\mathscr{J}f_0\right](t,\omega)=\sum_{j=0}^1\left[\mathscr{K}^j\frac{\partial\mathscr{F}}{\partial
t}\right](t,\omega),\nonumber\\ f_2(t,\omega)&=
\frac{\partial}{\partial
t}\mathscr{F}(t,\omega)-\int_0^t\mathcal{H}(t-\tau)f_1(\tau,\omega)\d\tau\nonumber\\&=\frac{\partial}{\partial
t}\mathscr{F}(t,\omega)+\left[\mathcal{K}f_0\right](t,\omega)+\left[\mathcal{K}^2f_0\right](t,\omega)
\nonumber\\&=\left[\mathscr{J}^2f_0\right](t,\omega)=\sum_{j=0}^2\left[\mathscr{K}^j\frac{\partial\mathscr{F}}{\partial
t}\right](t,\omega),\nonumber\\& \quad\vdots \nonumber\\
f_n(t,\omega)&= \frac{\partial}{\partial
t}\mathscr{F}(t,\omega)+\left[\mathcal{K}f_{n-1}\right](t,\omega)=\left[\mathscr{J}f_{n-1}\right](t,\omega).
\end{align}
Clearly these approximations satisfy
\begin{align}\label{en21g}
f_n(t,\omega)=\left[\mathscr{J}^nf_0\right](t,\omega)=\sum_{j=0}^n\left[\mathscr{K}^j\frac{\partial\mathscr{F}}{\partial
t}\right](t,\omega)
\end{align}
and
\begin{align}\label{en21h}
f_n(t,\omega)-f_{n-1}(t,\omega)=\left[\mathscr{K}^n\frac{\partial\mathscr{F}}{\partial
t}\right](t,\omega).
\end{align}
Hence by using the estimate (\ref{en21e}) on (\ref{en21h}), we
obtain
\begin{align}\label{en21i}
\left\|f_n(\cdot,\cdot)-f_{n-1}(\cdot,\cdot)\right\|_{\mathrm{L}^2(\Omega;\mathrm{L}^2(0,T))}<
\rho^n\left\|\frac{\partial\mathscr{F}}{\partial
t}\right\|_{\mathrm{L}^2(\Omega;\mathrm{L}^2(0,T))}\textrm{ with
}\rho<1.
\end{align}
The existence of a unique fixed point
$f(t,\omega)\in\mathrm{L}^2(\Omega;\mathrm{L}^2(0,T))$ to the map
$\mathscr{J}$ is ensured by Banach contraction mapping theorem and
using the estimate (\ref{en21i}). The unique fixed point
$f(t,\omega)\in\mathrm{L}^2(\Omega;\mathrm{L}^2(0,T))$ to the map
$\mathscr{J}$ is such that
\begin{align}\label{en21j}
\lim_{n\to\infty}\left\|f(\cdot,\cdot)-f_n(\cdot,\cdot)\right\|_{\mathrm{L}^2(\Omega;\mathrm{L}^2(0,T))}=0
\end{align}
and
\begin{align}\label{en21k}
\left[\mathscr{J}f\right](t,\omega)=f(t,\omega)=\sum_{n=0}^{\infty}\left[\mathscr{K}^n\frac{\partial\mathscr{F}}{\partial
t}\right](t,\omega).
\end{align}
\end{proof}
Let us now go back to the system (\ref{eqn1})-(\ref{eqn2}) and note
that a given $\frac{\partial}{\partial
t}\mathscr{F}(\cdot,\cdot)\in\mathrm{L}^2(\Omega;\mathrm{L}^2(0,T))$
corresponds uniquely to a solution $w_1(\cdot,\cdot,\cdot)$ given by
(\ref{eqn3}) and (\ref{en21b}). Moreover, $P(\cdot,\cdot,\cdot)$ is
given by
$$P(x,t,\omega)=-xf(t,\omega)+g_1(t,\omega),$$ for some arbitrary scalar function
$g_1(t,\omega)$ (which can be set to zero for a. e. $\Omega$) for
each $\omega\in\Omega$. The regularity of $w_1(\cdot,\cdot,\cdot)$
can be verified by estimating (\ref{eqn3}).

Let us now obtain a stochastic version of the Theorem 2 (section 3)
from Sritharan \cite{SS4}.

\begin{theorem}
Let $\frac{\partial}{\partial t}\mathscr{F}(\cdot,\cdot)\in
\mathrm{L}^2(\Omega;\mathrm{L}^2(0,T))$ be given. Then the solution
$(w_1,P)$ of (\ref{eqn1})-(\ref{eqn2}) is unique and for $ 0\leq
\e<\frac{1}{2}$
$$w_1\in \mathrm{L}^2(\Omega;\mathrm{L}^2(0,T;H_0^1(0,1)\cap H^{2+\e}(0,1))\cap
C([0,T];H_0^1(0,1)))$$ and $$\partial_t w_1\in
\mathrm{L}^2(\Omega;\mathrm{L}^2(0,T;H^\e(0,1))).$$
\end{theorem}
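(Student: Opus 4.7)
My plan is to read the claimed regularities directly from the explicit Fourier sine representation \eqref{eqn3} and to express the Sobolev norms on $(0,1)$ as weighted $\ell^2$ norms on the sine coefficients. Set
\[
c_n(t,\omega):=\frac{4}{(2n+1)\pi}\int_0^t e^{-\nu(2n+1)^2\pi^2(t-s)}f(s,\omega)\,\mathrm{d}s,
\]
so that $w_1(y,t,\omega)=\sum_{n\ge 0}c_n(t,\omega)\sin((2n+1)\pi y)$. The Dirichlet eigenbasis characterisation of the Sobolev scale gives, for $0\le s<5/2$,
\[
\|w_1(\cdot,t,\omega)\|_{H^{s}(0,1)}^2\;\sim\;\sum_{n\ge 0}(2n+1)^{2s}|c_n(t,\omega)|^2.
\]
Uniqueness is immediate: given $\partial_t\mathscr{F}$, Lemma \ref{lemma100} determines $f\in\mathrm{L}^2(\Omega;\mathrm{L}^2(0,T))$ uniquely, each $c_n$ is then fixed by the linear ODE obtained by projecting \eqref{eqn1}--\eqref{eqn01} onto $\sin((2n+1)\pi y)$, and $P$ is recovered up to an additive function of time alone, which we normalise to zero.

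For the $\mathrm{L}^2(\Omega;\mathrm{L}^2(0,T;H^{2+\e}(0,1)))$ estimate I apply Young's inequality for convolutions mode by mode: since $\|e^{-\nu(2n+1)^2\pi^2\,\cdot}\|_{\mathrm{L}^1(0,T)}\le [\nu(2n+1)^2\pi^2]^{-1}$,
\[
\int_0^T|c_n(t,\omega)|^2\,\mathrm{d}t\;\le\;\frac{16}{\pi^6\nu^2(2n+1)^6}\,\|f(\cdot,\omega)\|_{\mathrm{L}^2(0,T)}^2.
\]
Multiplying by $(2n+1)^{2(2+\e)}$ and summing produces the tail $\sum_n (2n+1)^{-2+2\e}$, which is finite precisely when $\e<1/2$; taking expectation and invoking Lemma \ref{lemma100} closes the estimate. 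The continuity statement $w_1\in C([0,T];H^1_0(0,1))$ follows in parallel from the pointwise bound $|c_n(t,\omega)|\le C(2n+1)^{-2}\|f(\cdot,\omega)\|_{\mathrm{L}^2(0,T)}$ already established in \eqref{1a1}: then $(2n+1)^2|c_n(t,\omega)|^2\le C(2n+1)^{-2}\|f(\cdot,\omega)\|_{\mathrm{L}^2(0,T)}^2$ is summable uniformly in $t$, each $c_n(\cdot,\omega)$ is continuous on $[0,T]$, and the Weierstrass M-test together with dominated convergence in $n$ yield continuity of $t\mapsto w_1(\cdot,t,\omega)$ into $H^1_0(0,1)$.

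For $\partial_t w_1\in \mathrm{L}^2(\Omega;\mathrm{L}^2(0,T;H^{\e}(0,1)))$ I use the modal ODE $c_n'(t,\omega)=-\nu(2n+1)^2\pi^2 c_n(t,\omega)+\frac{4}{(2n+1)\pi}f(t,\omega)$ obtained from the PDE by projection onto $\sin((2n+1)\pi y)$. Splitting $\sum_n (2n+1)^{2\e}|c_n'(t,\omega)|^2$ into its two contributions, the first piece, $\nu^2\pi^4\sum_n (2n+1)^{4+2\e}|c_n(t,\omega)|^2$, is controlled after integration in $t$ by the same convolution estimate as above and produces the convergent tail $\sum_n(2n+1)^{-2+2\e}$. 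The second piece equals $\frac{16}{\pi^2}|f(t,\omega)|^2\sum_n(2n+1)^{2\e-2}$, pointwise summable in $n$ under the same threshold $\e<1/2$; integrating in $t$ and taking expectation delivers the bound in terms of $\|f\|_{\mathrm{L}^2(\Omega;\mathrm{L}^2(0,T))}^2$.

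The single subtle point is the sharpness of the exponent $\e<1/2$: it reflects the mismatch between the forcing $f(t,\omega)$, which is independent of $y$ and hence nonzero on the boundary, and the homogeneous Dirichlet condition satisfied by $w_1$. The sine expansion of the constant function $1$ decays only like $1/(2n+1)$, so after convolution with the heat semigroup the coefficient $c_n$ cannot decay faster than $(2n+1)^{-3}$, placing $w_1(\cdot,t)$ on the exact borderline of $H^{5/2}(0,1)$. All the bounds above are careful bookkeeping of this single borderline-regularity phenomenon.
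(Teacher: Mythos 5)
Your proposal is correct and follows essentially the same route as the paper: the explicit sine-series representation, Young's inequality for convolutions applied mode by mode, and the convergence of $\sum_{n}(2n+1)^{-2+2\e}$ for $\e<\tfrac12$ are exactly the paper's computation, merely phrased through the Fourier coefficients $c_n$ rather than through powers of the Friedrich's extension $\mathscr{A}^{1+\e/2}$. You are in fact somewhat more explicit than the paper on the $C([0,T];H^1_0(0,1))$ continuity and on why the forcing term $f(t,\omega)\cdot 1$ lies in $H^{\e}(0,1)$ only for $\e<\tfrac12$, which is a welcome clarification of a step the paper leaves implicit.
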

\begin{proof}
We only need to verify the regularity. Let us denote by
$\mathscr{A}$ the Friedrich's extension of the operator
$-\partial_y^2$ to $H_0^1(0,1)\cap H^2(0,1).$ Then from
\eqref{eqn3}, by using the uniform convergence of the series
solution $w_1(y,t,\omega)$, Fubini's  theorem and Young's inequality
for convolutions, we have
\begin{align}\label{eqn15}
&\mathbb{E}\left(\int_0^T\|\mathscr{A}^{1+\e/2}w_1(y,t,\omega)\|_{\mathrm{L}^2(0,1)}^2\d
t\right)\nonumber\\
&=\mathbb{E}\left(\int_0^T\left\|\frac{4}{\pi}\sum_{n=0}^\infty\left\{\int_0^tf(\tau,\omega)
\frac{1}{(2n+1)}e^{-\nu(2n+1)^2\pi^2(t-\tau)}\d
\tau\right\}\right.\right.\nonumber\\&\qquad\qquad\left.\left.[\sin((2n+1)\pi
y)(2n+1)^2\pi^2]^{(1+\e/2)}\right\|^2_{\mathrm{L^2}(0,1)}\d
t\right)\nonumber\\&\leq C(\pi)\mathbb{E}\left(\sum_{n=0}^\infty
(2n+1)^{2+2\e}\int_0^T\left|\int_0^tf(\tau,\omega)e^{-\nu(2n+1)^2\pi^2(t-\tau)}\d
\tau\right|^2\d t\right)\nonumber\\&\leq
C(\pi)\mathbb{E}\left(\sum_{n=0}^\infty
(2n+1)^{2+2\e}\left\|\int_0^tf(\tau,\omega)e^{-\nu(2n+1)^2\pi^2(t-\tau)}\d
\tau\right\|^2_{\mathrm{L}^2(0,T)}\right)\nonumber\\&\leq
C(\pi)\mathbb{E}\left(\sum_{n=0}^\infty
(2n+1)^{2+2\e}\|f(\cdot,\cdot)\|^2_{\mathrm{L}^2(0,T)}\|e^{-\nu(2n+1)^2\pi^2t}\|_{\mathrm{L}^1(0,T)}^2\right)\nonumber\\&
\leq
C(\pi)\mathbb{E}\left(\|f(\cdot,\cdot)\|^2_{\mathrm{L}^2(0,T)}\right)\sum_{n=0}^\infty
(2n+1)^{2+2\e}\left(\frac{1}{\nu(2n+1)^2\pi^2}\right)^2\nonumber\\&
\leq
C_1(\pi,T,\nu)\|{f}(\cdot,\cdot)\|^2_{\mathrm{L}^2(\Omega;\mathrm{L}^2(0,T))}\sum_{n=0}^\infty
(2n+1)^{-2+2\e}.
\end{align}
Thus noting that $\sum_{n=0}^{\infty}(2n+1)^{-2+2\e}<\infty$ for
$\e<\frac{1}{2},$ we obtain
\begin{align}\label{eq1000}
\mathbb{E}\left(\int_0^T\|\mathscr{A}^{1+\e/2}w_1\|_{\mathrm{L}^2(0,1)}^2\d
t\right)<\infty. \end{align} Then one can deduce that,
$\mathscr{A}^{1+\e/2}w_1\in
\mathrm{L}^2(\Omega;\mathrm{L}^2(0,T;\mathrm{L}^2(0,1)))$ which
implies $w_1\in
\mathrm{L}^2(\Omega;\mathrm{L}^2(0,T;\mathrm{H}^{2+\e}(0,1))).$
Since $\mathscr{A}$ is the Friedrich's extension of the operator
$-\partial^2_y$ to $H_0^1(0,1)\cap H^2(0,1)$ and from
(\ref{eq1000}), we get
$$\mathbb{E}\left(\int_0^T\|\mathscr{A}^{\e/2}\partial_tw_1\|^2_{\mathrm{L}^2(0,1)}\d
t\right)<\infty,$$ which gives $\partial_t w_1\in
\mathrm{L}^2(\Omega;\mathrm{L}^2(0,T;H^\e(0,1))) \textrm{ for }
0\leq \e<\frac{1}{2}.$
\end{proof}

\begin{figure}[h]
\begin{center}
\includegraphics[width=.5\textwidth]{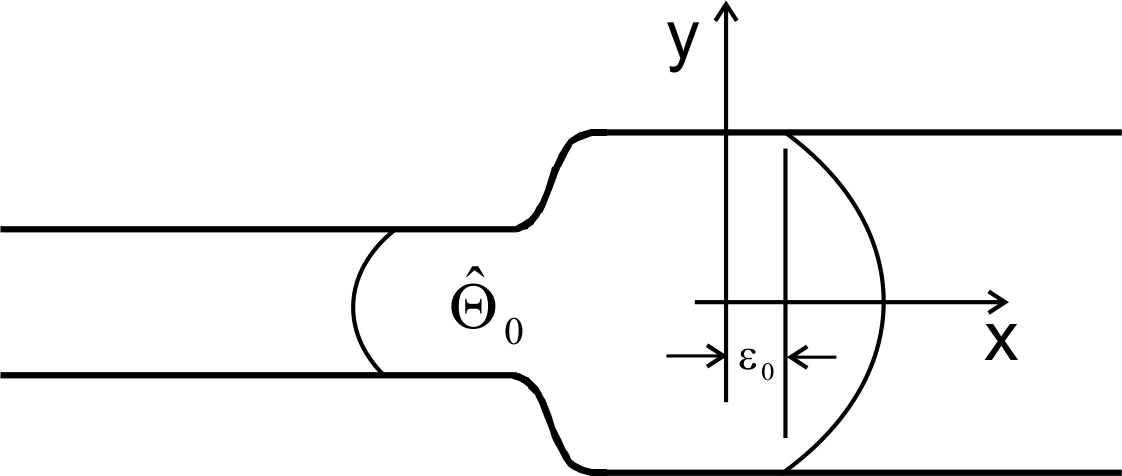}
\end{center}
\caption{The domain $\hat{\Theta}_0$.} \label{fig3}
\end{figure}

Let us now describe a method to extend the constructed flux
$\mathscr{F}(\cdot,\cdot)$ carried in $\mathbf{O}_i$ into the domain
$\Theta_0$ in a smooth manner. Let $\hat{\Theta}_0$ be an open
subset of $\Theta$ with compact closure such that
$\Theta_0\subset\hat{\Theta}_0\Subset\Theta$. Moreover,
$\partial\hat{\Theta}_0$ is of class $C^{\infty}$ (see Figure
\ref{fig3}).
\begin{proposition}\label{prop1}
There exists a vector field
$\hat{\w}:\hat{\Theta}_0\times[0,T]\times\Omega\rightarrow\mathbb{R}^2$
in the form
$\hat{\w}(\x,t,\omega)=(\hat{\psi}_y,-\hat{\psi}_x)(\x,t,\omega)$
for some stream function $\hat{\psi}$ such that
$$\nabla\cdot\hat{\w}=0\;\textrm{ in }\;\hat{\Theta}_0\times(0,T)\times\Omega$$ in
the sense of distributions and
\begin{align*}
\hat{\w}(\x,t,\omega)&=0,\;
t\in(0,T),\;\x=(x,y)\in\partial\hat{\Theta}_0\cap\partial\Theta,\\
\hat{\w}(\x,t,\omega)&=(w_1^{(i)}(y,t,\omega),0),\;t\in(0,T),\;\x=(x,y)\in\partial\hat{\Theta}_0\cap\mathbf{O}_i,\;i=1,2,
\end{align*}
for almost all $\omega\in\Omega$, in the sense of trace. Here
$w^{(i)}_1(\cdot,\cdot,\cdot)$ denotes the solution of the heat
equation constructed in $\mathbf{O}_i$. Moreover,
$$\hat{\w}\in\mathrm{L}^2(\Omega;\mathrm{L}^2(0,T;H^2(\hat{\Theta}_0))\cap
C([0,T];H^1(\hat{\Theta}_0)))$$ with $\partial_t\hat{\w}\in
\mathrm{L}^2(\Omega;\mathrm{L}^2(0,T;\mathrm{L}^2(\hat{\Theta}_0)))\subset\mathrm{L}^2(\Omega;\mathrm{L}^2(0,T;H^{-1}(\hat{\Theta}_0))).$
\end{proposition}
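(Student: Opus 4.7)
The plan is to realize $\hat{\w}$ as the skew gradient $(\partial_y\hat{\psi},-\partial_x\hat{\psi})$ of a stream function $\hat{\psi}$ on $\hat{\Theta}_0$, which automatically enforces $\nabla\cdot\hat{\w}=0$, and then to obtain $\hat{\psi}$ as the solution of a biharmonic Dirichlet problem whose boundary data is dictated by the outlet fields $w_1^{(i)}$ already constructed. First, in each outlet $\mathbf{O}_i$ of width $d_i$, I would define the local stream function
$$\psi^{(i)}(x,y,t,\omega) := \int_0^y w_1^{(i)}(y',t,\omega)\d y',$$
so that $(\partial_y\psi^{(i)},-\partial_x\psi^{(i)})=(w_1^{(i)},0)$. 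The no-slip condition $w_1^{(i)}(0,t,\omega)=w_1^{(i)}(d_i,t,\omega)=0$ together with the flux identity $\int_0^{d_i}w_1^{(i)}\d y=\mathscr{F}(t,\omega)$ gives $\psi^{(i)}\equiv 0$ along one wall of $\mathbf{O}_i$ and $\psi^{(i)}\equiv\mathscr{F}(t,\omega)$ along the other, so the two $\psi^{(i)}$ are compatible with a function taking the constant value $0$ on $\partial\Theta_1$ and $\mathscr{F}(t,\omega)$ on $\partial\Theta_2$.

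Next, I would define boundary data on $\partial\hat{\Theta}_0$ by
$$g(\x,t,\omega):=\begin{cases}\psi^{(i)}(\x,t,\omega), & \x\in\partial\hat{\Theta}_0\cap\mathbf{O}_i,\\ 0, & \x\in\partial\hat{\Theta}_0\cap\partial\Theta_1,\\ \mathscr{F}(t,\omega), & \x\in\partial\hat{\Theta}_0\cap\partial\Theta_2,\end{cases}\qquad h(\x,t,\omega):=0 \text{ on } \partial\hat{\Theta}_0,$$
the second identity holding because $\partial_x\psi^{(i)}\equiv 0$ on an outlet cross-section and $\hat{\w}=0$ on the walls. The vanishing of $w_1^{(i)}$ at the wall endpoints makes $g$ continuous at the four junction points; after smoothly rounding $\partial\hat{\Theta}_0$ near each junction (permitted by the $C^\infty$ hypothesis on $\partial\hat{\Theta}_0$), the pair $(g,h)$ lies in $H^{5/2}(\partial\hat{\Theta}_0)\times H^{3/2}(\partial\hat{\Theta}_0)$ with measurable $(t,\omega)$-dependence.

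Then, pointwise in $(t,\omega)$, I would solve the biharmonic Dirichlet problem $\Delta^2\hat{\psi}=0$ in $\hat{\Theta}_0$ with $\hat{\psi}=g$ and $\partial_n\hat{\psi}=h$ on $\partial\hat{\Theta}_0$. Standard elliptic theory on a smooth bounded domain furnishes a unique $\hat{\psi}(\cdot,t,\omega)\in H^3(\hat{\Theta}_0)$ depending linearly and continuously on $(g,h)$, and setting $\hat{\w}:=(\partial_y\hat{\psi},-\partial_x\hat{\psi})$ yields a divergence-free field in $H^2(\hat{\Theta}_0)$ realising the prescribed trace. Regularity in $(t,\omega)$ propagates from the previous theorem: $w_1^{(i)}\in\mathrm{L}^2(\Omega;\mathrm{L}^2(0,T;H^{2+\e}(0,1))\cap C([0,T];H_0^1(0,1)))$ and $\partial_tw_1^{(i)}\in\mathrm{L}^2(\Omega;\mathrm{L}^2(0,T;H^\e(0,1)))$; $y$-antidifferentiation gains one Sobolev index, and composition with the continuous biharmonic solution operator carries these estimates to $\hat{\w}$, producing the claimed membership in $\mathrm{L}^2(\Omega;\mathrm{L}^2(0,T;H^2(\hat{\Theta}_0))\cap C([0,T];H^1(\hat{\Theta}_0)))$ together with $\partial_t\hat{\w}\in\mathrm{L}^2(\Omega;\mathrm{L}^2(0,T;\mathrm{L}^2(\hat{\Theta}_0)))$.

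The principal obstacle is the matching of boundary data at the four corner points where an outlet cross-section meets a channel wall: since the statement demands $\partial\hat{\Theta}_0\in C^\infty$, a literal rectangular neighbourhood of $\Theta_0$ is not admissible. I would address this by smoothing $\partial\hat{\Theta}_0$ near each junction and exploiting the fact that $w_1^{(i)}\in H_0^1(0,1)$ (so that $\psi^{(i)}$ approaches the wall value tangentially) together with $h\equiv 0$ on both sides of every junction; a standard cutoff construction then renders $(g,h)$ in the correct Sobolev class, after which the elliptic theory applies directly.
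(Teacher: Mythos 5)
Your construction is sound and, modulo one small correction, proves the proposition. It is worth noting that the paper does not actually prove Proposition \ref{prop1}: it simply invokes the classical lemma of Ladyzhenskaya on extending boundary data with zero net flux into a bounded domain as a divergence-free field, and then reads off the regularity from that of the heat-equation solution on the boundary. Your clamped-plate (biharmonic Dirichlet) problem for the stream function is a concrete, self-contained realization of exactly that extension in two dimensions: prescribing $\hat{\psi}$ and $\partial_{\n}\hat{\psi}$ on $\partial\hat{\Theta}_0$ prescribes the full boundary trace of $\nabla\hat{\psi}$, hence of $\hat{\w}=(\hat{\psi}_y,-\hat{\psi}_x)$, and the zero-net-flux compatibility required by Ladyzhenskaya's lemma is precisely the single-valuedness of the boundary values of $\hat{\psi}$ (equal to $0$ on $\partial\Theta_1$, to $\mathscr{F}(t,\omega)$ on $\partial\Theta_2$, interpolated by $\psi^{(i)}$ on the outlet arcs), which you verify. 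What your route buys is an explicit linear bounded solution operator $(g,h)\mapsto\hat{\psi}$ through which the $H^{2+\e}$ and $H^{\e}$ regularity of $w_1^{(i)}$ and $\partial_tw_1^{(i)}$, and the measurability in $(t,\omega)$, pass automatically to $\hat{\w}$ and $\partial_t\hat{\w}$; the paper's citation buys brevity at the cost of leaving the junction-matching and the stochastic measurability implicit.

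The one point to fix: $h=\partial_{\n}\hat{\psi}$ is not identically zero on $\partial\hat{\Theta}_0\cap\mathbf{O}_i$. The desired trace $\hat{\w}=(w_1^{(i)},0)$ forces $\nabla\hat{\psi}=(0,w_1^{(i)})$ there, hence $\partial_{\n}\hat{\psi}=n_2\,w_1^{(i)}$, where $\n=(n_1,n_2)$ is the outward normal; this vanishes only where the arc is a genuinely vertical cross-section ($n_2=0$). On the rounded portions near the junctions with the channel walls you must therefore take $h=n_2\,w_1^{(i)}$ rather than $h\equiv 0$, otherwise the resulting $\hat{\w}$ does not have the stated trace there. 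Since $w_1^{(i)}$ vanishes on the walls, this corrected data is still continuous across the junctions and lies in $H^{3/2}(\partial\hat{\Theta}_0)$ for a.e.\ $(t,\omega)$, so the elliptic step and the rest of your argument go through unchanged.
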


Proposition \ref{prop1} can be deduced from the well-known method of
extending the boundary data (with zero net flux) into the domain as
a divergence-free vector field (Ladyzhenskaya \cite{La}). Also we
note that the regularity of the boundary data corresponds to that
obtained in the solution of the heat equation and hence we have

$$\hat{\w}\big|_{\partial\hat{\Theta}_0}\in\mathrm{L}^2(\Omega;\mathrm{L}^2(0,T;H^{\frac{3}{2}+\e}(\partial\hat{\Theta}_0))\cap
C([0,T];H^{\frac{1}{2}}(\partial\hat{\Theta}_0)))$$ and
$$\partial_t\hat{\w}\big|_{\partial\hat{\Theta}_0}\in\mathrm{L}^2(\Omega;\mathrm{L}^2(0,T;H^{\e-\frac{1}{2}}(\partial\hat{\Theta}_0))),\;0\leq
\e<\frac{1}{2}.$$ Let us use $\hat{\w}(\x,t,\omega)$ in
$\hat{\Theta}_0$ and $w_1^{(i)}(y,t,\omega)$ in $\mathbf{O}_i$,
$i=1,2$ to construct the desired vector field $\w(\x,t,\omega)$ in
$\Theta$ (see Sritharan \cite{SS4} for more details about the
construction).

Let $\psi^{(i)}(y,t,\omega)$ be the stream function corresponding to
the vector field
$$(w_1^{(i)}(y,t,\omega),0)=(\partial_y\psi^{(i)}(y,t,\omega),\partial_x\psi^{(i)}(y,t,\omega))$$
in $\mathbf{O}_i$. Let us set $\psi^{(i)}\big|_{y=0}=0$ on the lower
wall of $\mathbf{O}_i$, hence on the upper wall of $\mathbf{O}_i$,
we obtain
\begin{align*}
\mathscr{F}(t,\omega)=\int_0^1w_1(y,t,\omega)\d
y=\int_0^1\frac{\partial}{\partial y}\psi^{(i)}(y,t,\omega)\d
y=\psi^{(i)}(y,t,\omega)\Big|_{0}^1=\psi^{(i)}(1,t,\omega).
\end{align*}
Also due to the flux condition, we have
$$\hat{\psi}\big|_{\partial\Theta_2\cap\partial\hat{\Theta}_0}=\mathscr{F}(t,\omega)\textrm{
and }
\hat{\psi}\big|_{\partial\Theta_1\cap\partial\hat{\Theta}_0}=0.$$

Let us construct a $C^{\infty}$ function
$\lambda(\cdot):\mathbb{R}\to(0,1)$ by mollifying a step function
such that
$$\lambda(\xi)=\left\{\begin{array}{cl}0&\textrm{ for }\xi\leq 0,
\\ 1&\textrm{ for }0<\e_0\leq\xi,\end{array}\right.$$ where $\e_0$
is a small number. Let us define
$$\psi(x,y,t,\omega)=(1-\lambda(x))\hat{\psi}(x,y,t,\omega)+\lambda(x)\psi^{(i)}(y,t,\omega)\textrm{
in } \hat{\Theta}_0\cap\mathbf{O}_i,i=1,2,$$ for each
$\omega\in\Omega$.

In this way we obtain a stream function which takes
$\hat{\psi}(x,y,t,\omega)$ in $\Theta_0$ and smoothly become
$\psi^{(i)}(y,t,\omega)$ in $\mathbf{O}_i$. On the lower wall
$\partial\Theta_1$, we have
$$\psi(\x,t,\omega)=\hat{\psi}(\x,t,\omega)=\psi^{(i)}(y,t,\omega)=0$$
and on the upper wall $\partial\Theta_2,$ we have
$$\psi(\x,t,\omega)=\hat{\psi}(\x,t,\omega)=\psi^{(i)}(y,t,\omega)=\mathscr{F}(t,\omega).$$
Hence we obtain the desired divergence free basic vector field as
$$\w(\x,t,\omega)=(\partial_y\psi,-\partial_x\psi)(\x,t,\omega).$$

A similar extension can be constructed for the scalar field
$P(\x,t,\omega)$ so that in $\Theta_0$,
$$P(\x,t,\omega)\in\mathrm{L}^2(\Omega;\mathrm{L}^2(0,T;H^1(\Theta_0)))$$
and becomes $-xf(t,\omega)$ smoothly in $\mathbf{O}_i$.

Let us now note some of the properties of the constructed basic
vector field  $\w(\x,t,\omega)$:
\begin{itemize}
\item [(i)] The flux condition, $\mathscr{F}(t,\omega)=\int_{\Gamma}\w(\x,t,\omega)\cdot\n\d S$, where $\Gamma$
is any cross section of the channel.
\item [(ii)] On the boundary, $\x\in\partial\Theta$, $\w(\x,t,\omega)=0$,
for all $t\in[0,T]$ and $\omega\in\Omega$.
\item [(iii)] In the bounded region $\Theta_0$, the basic vector field $\w$ satisfies $$\w\in
\mathrm{L}^2(\Omega;\mathrm{L}^2(0,T;H^2(\Theta_0))\cap
C([0,T];H^1(\Theta_0)))$$ with $\w_t\in
\mathrm{L}^2(\Omega;\mathrm{L}^2(0,T;H^{-1}(\Theta_0))).$ This
gives, in $\Theta_0$
\begin{align}\label{eqn1001}
\w_t-\nu\Delta\w+\w\cdot\nabla\w\in\mathrm{L}^2(\Omega;\mathrm{L}^2(0,T;H^{-1}(\Theta_0))).
\end{align}
By the regularity of $\w$ in $\Theta_0$, we have
\begin{align}\label{eqn1001a}\sup_{\x\in\Theta_0}|\w(\x,\cdot,\omega)|=
\beta_{10}(\cdot,\omega),\;\;
\|\nabla\w(\x,t,\omega)\|_{\mathrm{L}^2(0,T;\mathrm{L}^2(\Theta_0))}=
\beta_{20}(\omega),\end{align} where
$\beta_{10}(\cdot,\cdot)\in\mathrm{L}^2(\Omega;\mathrm{L}^2(0,T))$
and $\beta_{20}(\cdot)\in\mathrm{L}^2(\Omega)$.
\item [(iv)] In the infinite strips $\mathbf{O}_i$, (actually in
$\mathbf{O}_i\backslash\hat{\Theta}_0$) $\w(x,y,t,\omega)$ becomes
$w^{(i)}(y,t,\omega)$ and hence satisfies properties (i), (ii)
above.

Since, $w_1^{(i)}\in \mathrm{L}^2(\Omega;C^1([0,T];H^1(0,1))),$ we
have
\begin{align}\label{eqn1002}
\sup_{(\x,t)\in\mathbf{O}_i\times(0,T)}|\w(\x,t,\omega)|=\sup_{(y,t)\in(0,1)\times(0,T)}|w_1^{(i)}(y,t,\omega)|=
\beta_{1i}(\omega),
\end{align}
with $\beta_{1i}(\cdot)\in\mathrm{L}^2(\Omega).$

Also, since $w_1^{(i)}\in
\mathrm{L}^2(\Omega;\mathrm{L}^2(0,T;H^2(0,1)))$, we have
\begin{align}\label{eqn1003}
\sup_{\x\in\mathbf{O}_i}|\nabla\w(\x,\cdot,\omega)|=\sup_{y\in(0,1)}|\partial_y
w_1^{(i)}(y,\cdot,\omega)|= \beta_{2i}(\cdot,\omega),
\end{align}
with
$\beta_{2i}(\cdot,\cdot)\in\mathrm{L}^2(\Omega;\mathrm{L}^2(0,T))$.

In $\mathbf{O}_i$, $w_1^{(i)}$ is independent of the axial direction
$x$, we have the constructed vector field $\w(\cdot,\cdot,\cdot)$
(for almost all time and almost all $\omega\in\Omega$) has infinite
energy $\|\w\|_{\mathrm{L}^2(\Theta)}$ and infinite dissipation
$\|\nabla\w\|_{\mathrm{L}^2(\Theta)}$ in $\Theta$, since
$$\int_0^{\infty}\int_0^1|w_1^{(i)}(y,t,\omega)|^2\d y\d x=\infty\textrm{ and
}\int_0^{\infty}\int_0^1|\partial_yw_1^{(i)}(y,t,\omega)|^2\d y\d
x=\infty,$$ $\mathbb{P}$ - a. s.
\item [(v)] Also note that in $\mathbf{O}_i$ (actually in
$\mathbf{O}_i\backslash\hat{\Theta}_0$),
$$\w_t-\nu\Delta\w+\w\cdot\nabla\w+\nabla P=0,$$ for almost all
$\omega\in\Omega.$
\end{itemize}

\subsection{Connection between $\mathscr{F}$ and $\beta$.}
Now let us establish certain connection between the norm of
$\mathscr{F}$ and $\beta$ by using the properties of $\w$ in
$\Theta_0$ and in the two outlets $\mathbf{O}_1$ and $\mathbf{O}_2$.
\begin{proposition}\label{prop11}
In $\Theta=\mathbf{O}_1\cup\mathbf{O}_2\cup\Theta_0$, the regularity
properties of the basic vector field $\w$ implies that
$\beta_{10}(\cdot,\cdot)\in\mathrm{L}^2(\Omega;\mathrm{L}^2(0,T));\;\beta_{11}(\cdot),\beta_{12}(\cdot)\in\mathrm{L}^2(\Omega)$
and
$\beta_{20}(\cdot)\in\mathrm{L}^2(\Omega);\;\beta_{21}(\cdot,\cdot),\beta_{22}(\cdot,\cdot)\in\mathrm{L}^2(\Omega;\mathrm{L}^2(0,T)).$
Thus in $\Theta$, we have
\begin{align}\label{eqn1004}
\mathbb{E}\left[\int_0^T|\beta_{10}(t,\omega)|^2\d
t+\left(\beta^2_{11}(\omega)+\beta^2_{12}(\omega)\right)T\right]\leq
C(\pi,\nu,T)\|\mathscr{F}\|_{_{\mathrm{L}^2(\Omega;H^1(0,T))}}^2
\end{align}
 and
\begin{align}\label{eqn1005}
\mathbb{E}\left[\beta^2_{20}(\omega)T+\int_0^T|\beta_{21}(t,\omega)|^2\d
t+\int_0^T|\beta_{22}(t,\omega)|^2\d t\right]\leq
C(\pi,\nu,T)\|\mathscr{F}\|_{_{\mathrm{L}^2(\Omega;H^1(0,T))}}^2,
\end{align}
where $\beta$'s are defined in (\ref{eqn1001a}), (\ref{eqn1002}) and
(\ref{eqn1003}).
\end{proposition}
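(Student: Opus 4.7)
The plan is to bound each of the six $\beta$-quantities by a suitable norm of either $w_1^{(i)}$ (in the outlets $\mathbf{O}_i$) or $\hat{\w}$ (in $\Theta_0$), and then chain those norms through the regularity theorems above and Lemma~\ref{lemma100} until only $\|\partial_t \mathscr{F}\|_{\mathrm{L}^2(\Omega;\mathrm{L}^2(0,T))}$ remains. Since this last quantity is automatically dominated by $\|\mathscr{F}\|_{\mathrm{L}^2(\Omega;H^1(0,T))}$, the two claimed inequalities follow. The master reduction is supplied by Lemma~\ref{lemma100}: the contraction estimate (\ref{en21e}) and the series representation (\ref{en21k}) yield
\[
\|f\|_{\mathrm{L}^2(\Omega;\mathrm{L}^2(0,T))} \leq \frac{1}{1-\rho}\left\|\frac{\partial\mathscr{F}}{\partial t}\right\|_{\mathrm{L}^2(\Omega;\mathrm{L}^2(0,T))},
\]
with $\rho<1$, so every bound on $f$ becomes a bound on $\mathscr{F}$ at no extra cost.

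For the outlet quantities I would argue as follows. For $\beta_{1i}(\omega) = \sup_{(y,t)}|w_1^{(i)}|$, the Weierstrass M-test computation already performed in (\ref{1a1}) is pointwise in $\omega$, and after summing the series $\sum(2n+1)^{-2}$ it gives $\beta_{1i}(\omega) \leq C(\nu)\|f(\cdot,\omega)\|_{\mathrm{L}^2(0,T)}$; squaring, taking expectation, and multiplying by $T$ handles the $\beta_{1i}$ term in (\ref{eqn1004}). For $\beta_{2i}(t,\omega) = \sup_y|\partial_y w_1^{(i)}|$ I would invoke the one-dimensional Sobolev embedding $H^{2+\e}(0,1) \hookrightarrow C^1([0,1])$ to get $\beta_{2i}(t,\omega) \leq C\|w_1^{(i)}(\cdot,t,\omega)\|_{H^{2+\e}(0,1)}$, after which the regularity estimate (\ref{eqn15}) directly yields the required bound on $\mathbb{E}\int_0^T\beta_{2i}^2\,\d t$ in terms of $\|f\|_{\mathrm{L}^2(\Omega;\mathrm{L}^2(0,T))}^2$.

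For the interior quantities I would use Proposition~\ref{prop1}. In two dimensions $H^2(\hat{\Theta}_0) \hookrightarrow L^\infty(\hat{\Theta}_0)$, giving $\beta_{10}(t,\omega) \leq C\|\hat{\w}(\cdot,t,\omega)\|_{H^2(\hat{\Theta}_0)}$, while $\beta_{20}(\omega)^2 \leq \|\hat{\w}\|_{\mathrm{L}^2(0,T;H^1(\hat{\Theta}_0))}^2$ trivially. The Ladyzhenskaya-type divergence-free extension estimate that underlies Proposition~\ref{prop1} controls $\|\hat{\w}\|_{H^2(\hat{\Theta}_0)}$ linearly by $\sum_i\|w_1^{(i)}\|_{H^{3/2+\e}(\partial\hat{\Theta}_0\cap\mathbf{O}_i)}$, and the trace theorem dominates this in turn by $\|w_1^{(i)}\|_{H^{2+\e}(0,1)}$; applying (\ref{eqn15}) a second time closes the loop back to $\|f\|_{\mathrm{L}^2(\Omega;\mathrm{L}^2(0,T))}^2$, and hence to $\|\mathscr{F}\|_{\mathrm{L}^2(\Omega;H^1(0,T))}^2$.

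The main technical obstacle is the third step: Proposition~\ref{prop1} is stated qualitatively, but I need the extension estimate to be linear in the boundary trace and uniform in $\omega$. I would extract this from the standard Ladyzhenskaya construction used in the proof of Proposition~\ref{prop1}, where the stream-function/cutoff construction is explicit and the bound on $\hat{\w}$ follows from a chain of cutoff multiplication, Bogovskii-type corrections, and elliptic trace lifts, each of which is linear in the data. Once this linear dependence is isolated, the remaining arithmetic is the routine bookkeeping of combining the three outlet/interior contributions and collecting constants depending only on $(\pi,\nu,T)$.
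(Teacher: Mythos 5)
Your proposal is correct and follows essentially the same route as the paper: reduce every $\beta$-quantity to a norm of $w_1^{(i)}$ or of the extension $\hat{\w}$, control those via the fractional regularity estimate \eqref{eqn15} (equivalently \eqref{eqn16a}) and the explicit series bound \eqref{1a1}, and then pass from $\|f\|_{\mathrm{L}^2(\Omega;\mathrm{L}^2(0,T))}$ to $\|\partial_t\mathscr{F}\|_{\mathrm{L}^2(\Omega;\mathrm{L}^2(0,T))}$ by the contraction estimate of Lemma \ref{lemma100}. You are in fact somewhat more explicit than the paper about which Sobolev embeddings are needed for the sup-norms and about the linearity of the divergence-free extension on $\hat{\Theta}_0$, which the paper only asserts "can be easily shown"; these additions are consistent with, not divergent from, the paper's argument.
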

\begin{proof}
If we take $1+\e/2=\varsigma$ in the inequality \eqref{eqn15}, one
can see that for $\rho<1$
\begin{align}\label{eqn16}
&\mathbb{E}\left(\int_0^T\|\mathscr{A}^{\varsigma}w_1(y,t,\omega)\|^2_{\mathrm{L}^2(0,1)}\d
t\right)\nonumber\\&\leq
C_1(\pi,T,\nu)\|f(\cdot,\cdot)\|^2_{\mathrm{L}^2(\Omega;\mathrm{L}^2(0,T))}\sum_{n=0}^\infty(2n+1)^{4\varsigma-6}
\nonumber\\&\leq
C_1(\pi,T,\nu)\left\|\sum_{p=0}^{\infty}\left[\mathscr{K}^p\frac{\partial}{\partial
t}\mathscr{F}(t,\omega)\right]\right\|^2_{\mathrm{L}^2(\Omega;\mathrm{L}^2(0,T))}\sum_{n=0}^\infty(2n+1)^{4\varsigma-6}.
\end{align}
Now let us take the term
$\left\|\sum_{p=0}^{\infty}\left[\mathscr{K}^p\frac{\partial}{\partial
t}\mathscr{F}(t,\omega)\right]\right\|^2_{\mathrm{L}^2(\Omega;\mathrm{L}^2(0,T))}$
and use (\ref{en21a}), (\ref{en21i}) and Cauchy-Schwarz inequality
to get
\begin{align*}
&\left\|\sum_{p=0}^{\infty}\left[\mathscr{K}^p\frac{\partial}{\partial
t}\mathscr{F}(t,\omega)\right]\right\|^2_{\mathrm{L}^2(\Omega;\mathrm{L}^2(0,T))}\nonumber\\&
=
\mathbb{E}\left(\left\|\sum_{p=0}^{\infty}\left[\mathscr{K}^p\frac{\partial}{\partial
t}\mathscr{F}(t,\omega)\right]\right\|^2_{\mathrm{L}^2(0,T)}\right)\nonumber\\&
=
\mathbb{E}\left(\left<\sum_{p=0}^{\infty}\left[\mathscr{K}^p\frac{\partial}{\partial
t}\mathscr{F}(t,\omega)\right],\sum_{m=0}^{\infty}\left[\mathscr{K}^m\frac{\partial}{\partial
t}\mathscr{F}(t,\omega)\right]\right>_{\mathrm{L}^2(0,T)}\right)\nonumber\\&
\leq
\mathbb{E}\left(\sum_{p=0}^{\infty}\sum_{m=0}^{\infty}\left|\left<\left[\mathscr{K}^p\frac{\partial}{\partial
t}\mathscr{F}(t,\omega)\right],\left[\mathscr{K}^m\frac{\partial}{\partial
t}\mathscr{F}(t,\omega)\right]\right>_{\mathrm{L}^2(0,T)}\right|\right)\nonumber\\&
\leq
\mathbb{E}\left(\sum_{p=0}^{\infty}\sum_{m=0}^{\infty}\left\|\mathscr{K}^p\frac{\partial}{\partial
t}\mathscr{F}(t,\omega)\right\|_{\mathrm{L}^2(0,T)}\left\|\mathscr{K}^m\frac{\partial}{\partial
t}\mathscr{F}(t,\omega)\right\|_{\mathrm{L}^2(0,T)}\right)\nonumber\\&
<
\mathbb{E}\left(\sum_{p=0}^{\infty}\sum_{m=0}^{\infty}\rho^p\left\|\frac{\partial}{\partial
t}\mathscr{F}(t,\omega)\right\|_{\mathrm{L}^2(0,T)}\rho^m\left\|\frac{\partial}{\partial
t}\mathscr{F}(t,\omega)\right\|_{\mathrm{L}^2(0,T)}\right)\nonumber\\&
=\mathbb{E}\left(\sum_{p=0}^{\infty}\rho^p\sum_{m=0}^{\infty}\rho^m\left\|\frac{\partial}{\partial
t}\mathscr{F}(t,\omega)\right\|_{\mathrm{L}^2(0,T)}^2\right)=
\mathbb{E}\left(\left(\frac{1}{1-\rho}\right)^2\left\|\frac{\partial\mathscr{F}}{\partial
t}\right\|_{\mathrm{L}^2(0,T)}^2\right)\nonumber\\&
=\left(\frac{1}{1-\rho}\right)^2\left\|\frac{\partial\mathscr{F}}{\partial
t}\right\|_{\mathrm{L}^2(\Omega;\mathrm{L}^2(0,T))}^2
\end{align*}
Hence the inequality (\ref{eqn16}) becomes
\begin{align*}
&\mathbb{E}\left(\int_0^T\|\mathscr{A}^{\varsigma}w_1(y,t,\omega)\|^2_{\mathrm{L}^2(0,1)}\d
t\right) \nonumber\\&\leq
C_1(\pi,T,\nu)\left(\frac{1}{1-\rho}\right)^2\left\|\frac{\partial\mathscr{F}}{\partial
t}\right\|_{\mathrm{L}^2(\Omega;\mathrm{L}^2(0,T))}^2\sum_{n=0}^\infty(2n+1)^{4\varsigma-6}.
\end{align*}
The series $\sum_{n=0}^\infty(2n+1)^{4\varsigma-6}$ is convergent
for $0\leq \varsigma<\frac{5}{4}$. Hence for $0\leq
\varsigma<\frac{5}{4}$ and for $\rho<1$, we get
\begin{align}\label{eqn16a}\mathbb{E}\left(\int_0^T\|\mathscr{A}^{\varsigma}w_1(y,t,\omega)\|^2_{\mathrm{L}^2(0,1)}\d
t\right)&\leq
C_2(\pi,T,\nu)\sum_{n=0}^\infty(2n+1)^{4\varsigma-6}\left\|\frac{\partial\mathscr{F}}{\partial
t}\right\|_{\mathrm{L}^2(\Omega;\mathrm{L}^2(0,T))}^2\nonumber\\&\leq
C_3\|\mathscr{F}(t)\|^2_{_{\mathrm{L}^2(\Omega;H^1(0,T))}}.\end{align}
By using the estimates (\ref{eqn1002}) and (\ref{eqn1003}), by
taking $\varsigma=0,\frac{1}{2}$  in (\ref{eqn16a}), one can easily
see that
\begin{align*}&\mathbb{E}\left[\int_0^T|\beta_{21}(t,\omega)|^2\d
t+\int_0^T|\beta_{22}(t,\omega)|^2\d t\right]\\&=
\mathbb{E}\left[\int_0^T\sup_{y\in(0,1)}|\partial_yw_1^{(1)}(y,t,\omega)|^2\d
t+\int_0^T\sup_{y\in(0,1)}|\partial_yw_1^{(2)}(y,t,\omega)|^2\d
t\right]\\&\leq
C\|\mathscr{F}(t)\|^2_{_{\mathrm{L}^2(\Omega;H^1(0,T))}}\end{align*}and
\begin{align*}&\mathbb{E}\left[(\beta^2_{11}(\omega)+\beta^2_{12}(\omega))\right]T
\\&= \mathbb{E}\left[\int_0^T\sup_{(y,t)\in(0,1)\times(0,T)}|w_1^{(1)}(y,t,\omega)|^2\d t
+\int_0^T\sup_{(y,t)\in(0,1)\times(0,T)}|w_1^{(2)}(y,t,\omega)|^2\d
t\right]\\&\leq
C\|\mathscr{F}\|^2_{_{\mathrm{L}^2(\Omega;H^1(0,T))}}\end{align*} in
$\mathbf{O}_i$.

Since $\Theta_0$ is bounded, by using property (iii) of the basic
vector field and we extended the constructed flux
$\mathscr{F}(t,\omega)=\int_{\Gamma}\mathbf{w}\cdot\mathbf{n}\d S$
carried in $\mathbf{O}_i$ into the domain $\Theta_0$ in a smooth
manner, it can be easily shown that
$$\mathbb{E}\left[\beta^2_{20}(\omega)\right]T\leq C\|\mathscr{F}\|^2_{_{\mathrm{L}^2(\Omega;H^1(0,T))}} \textrm{ and }
\mathbb{E}\left[\int_0^T|\beta_{10}(t,\omega)|^2\d t\right] \leq
C\|\mathscr{F}\|^2_{_{\mathrm{L}^2(\Omega;H^1(0,T))}}.$$ Hence, we
have the relation (\ref{eqn1004}) and (\ref{eqn1005}) in $\Theta$.
\end{proof}

\begin{remark}\label{rem11}
From (\ref{eqn1003}), we have
$$\sup_{\x\in\mathbf{O}_i}|\nabla\w(\x,\cdot,\omega)|= \beta_{2i}(\cdot,\omega)\in
\mathrm{L}^2(\Omega;\mathrm{L}^2(0,T)).$$ For $i=1,2$, we can always
get an upper bound for $\beta_{2i}(t,\omega)$ for all $t\in[0,T]$
and almost all $\omega\in\Omega$. By using (\ref{eqn3}) and Young's
inequality for convolution of two functions, we obtain
\begin{align}\label{qeta}
&\sup_{0\leq t\leq
T}\sup_{\x\in\mathbf{O}_2}|\nabla\w(\x,t,\omega)|^2\nonumber\\&
=\sup_{0\leq t\leq T}\sup_{y\in(0,1)}\left|\frac{\partial}{\partial
y}\left(\frac{4}{\pi}\sum_{n=0}^{\infty}\left\{\int_0^tf(s,\omega)\frac{1}{2n+1}e^{-\nu(2n+1)^2\pi^2(t-s)}\d
s\right\}\right.\right.\nonumber\\&\quad\sin(2n+1)\pi
y\Big)\Big|^2\nonumber\\&\leq \sup_{0\leq t\leq
T}4\pi\sum_{n=0}^{\infty}\int_0^t|f(s,\omega)|^2e^{-2\nu(2n+1)^2\pi^2(t-s)}\d
s\nonumber\\&\leq 4\pi \sup_{0\leq t\leq
T}\sum_{n=0}^{\infty}\left(\int_0^t|f(s,\omega)|^2\d
s\right)\left(\int_0^te^{-2\nu(2n+1)^2\pi^2t}\d s\right)
\nonumber\\&\leq \frac{2}{\pi \nu}\sup_{0\leq t\leq
T}\sum_{n=0}^{\infty}\frac{1}{(2n+1)^2}\int_0^t|f(s,\omega)|^2\d
s\nonumber\\&\leq
\frac{\pi}{4\nu}\|f(t,\omega)\|^2_{\mathrm{L}^2(0,T)}=\eta_{22}^2(\omega),
\end{align}
where $\eta_{22}(\omega)\in\mathrm{L^2}(\Omega)$, since
$f\in\mathrm{L}^2(\Omega;C([0,T]))\subset\mathrm{L}^2(\Omega;\mathrm{L}^2(0,T))$.

Hence, we get $\sup_{t\in[0,T]}\beta_{2i}^2(t,\omega)\leq
\eta^2_{2i}(\omega)\textrm{ for }i=1,2$, $\mathbb{P}$-a. s.
\end{remark}

\section{The Linear and Multilinear Operators}\setcounter{equation}{0}
In this section we define the Stokes operator, the inertia operator
and the other operators relevant to our analysis. Most of the
results obtained in this section has been taken from \cite{SS4}
given here for completeness.
\subsection{The Stokes Operator}
Let us first define the Stokes operator for the two-dimensional
admissible channel domain and analyze its properties. Let us denote
by $a(\cdot,\cdot)$ the symmetric bilinear form
\begin{align}\label{eqn1010}
a(\u,\v)=\int_{\Theta}\nabla\u\cdot\nabla\v\d\x.
\end{align}
Let us now define the Stokes operator $A$ and its domain $D(A)$ in
the following way. Given $\u\in\mathbf{V}$, if there exists an
element $\mathbf{g}\in\mathbf{H}$ such that
$$a(\u,\v)=(\mathbf{g},\v)_{\mathbf{H}},\
\;\forall\;\v\in\mathbf{V},$$ then we say $\u\in D(A)$ and $A\u=g.$
By Poincar\'{e} inequality $\|\u\|_{\mathrm{L}^2(\Theta)}\leq
C\|\nabla\u\|_{\mathrm{L}^2(\Theta)}$ holds for the admissible
channel domains, we obtain the coerciveness ($\mathbf{V}$-elliptic)
property as,
\begin{align*}
a(\u,\u)=\|\nabla\u\|^2_{\mathrm{L}^2(\Theta)}\geq
C\|\u\|^2,\;\forall\; \u\in\mathbf{V}.
\end{align*}
Since the form $a(\cdot,\cdot)$ is symmetric, continuous, and
positive definite, we have the following standard results obtained
in \cite{Th}.
\begin{proposition}
There exists a self-adjoint, regularly accretive onto map
$\tilde{A}\in\mathscr{L}(D(\tilde{A});\mathbf{H})$ such that
$$a(\u,\v)=(\tilde{A}\u,\v)_{\mathbf{H}},\;\forall\;\u\in
D(A),\;\forall\;\v\in\mathbf{V},$$ with
$D(\tilde{A})\subset\mathbf{V}$ dense. Moreover, it is possible to
extend this operator as an isomorphic onto map
$\hat{A}\in\mathscr{L}(\mathbf{V},\mathbf{V}')$ such that
$$a(\u,\v)=(\hat{A}\u,\v)_{\mathbf{V}\times\mathbf{V}'},\;\forall\;\u,\v\in\mathbf{V}.$$
\end{proposition}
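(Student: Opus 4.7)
The plan is to recognize the statement as the standard Lax--Milgram / Friedrichs extension package for a symmetric, continuous, coercive bilinear form living on the Gelfand triple $\mathbf{V}\subset\mathbf{H}\equiv\mathbf{H}'\subset\mathbf{V}'$. First I would verify the hypotheses on $a$: the form $a(\u,\v)=\int_{\Theta}\nabla\u\cdot\nabla\v\,\d\x$ is visibly symmetric and continuous on $\mathbf{V}\times\mathbf{V}$ by Cauchy--Schwarz, while coerciveness $a(\u,\u)\geq C\|\u\|^{2}$ was already established just above the proposition using Poincar\'e's lemma for admissible channel domains. With these in hand I would construct $\hat{A}$ directly: for fixed $\u\in\mathbf{V}$, the functional $\v\mapsto a(\u,\v)$ belongs to $\mathbf{V}'$, so setting $(\hat{A}\u,\v)_{\mathbf{V}\times\mathbf{V}'}=a(\u,\v)$ gives $\hat{A}\in\mathscr{L}(\mathbf{V},\mathbf{V}')$; by Lax--Milgram $\hat{A}$ is an isomorphism onto $\mathbf{V}'$.

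Next I would recover $\tilde{A}$ as the part of $\hat{A}$ acting in $\mathbf{H}$. Define $D(\tilde{A})=\{\u\in\mathbf{V}:\hat{A}\u\in\mathbf{H}\}$ and $\tilde{A}\u=\hat{A}\u$. Under the Gelfand identification $\mathbf{H}\equiv\mathbf{H}'$, membership $\hat{A}\u\in\mathbf{H}$ is precisely the condition ``there exists $\mathbf{g}\in\mathbf{H}$ with $a(\u,\v)=(\mathbf{g},\v)_{\mathbf{H}}$ for all $\v\in\mathbf{V}$'' from the definition of the Stokes operator, and in that case $\tilde{A}\u=\mathbf{g}$. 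Surjectivity $\tilde{A}(D(\tilde{A}))=\mathbf{H}$ is then immediate from the isomorphism property of $\hat{A}$: any $\mathbf{g}\in\mathbf{H}\subset\mathbf{V}'$ admits the preimage $\u=\hat{A}^{-1}\mathbf{g}\in\mathbf{V}$, which lies in $D(\tilde{A})$ tautologically. Symmetry of $\tilde{A}$ is inherited from symmetry of $a$, and regular accretivity reduces to $(\tilde{A}\u,\u)_{\mathbf{H}}=a(\u,\u)\geq C\|\u\|^{2}\geq 0$.

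The delicate step, and the one I expect to be the main obstacle, is promoting $\tilde{A}$ from symmetric to genuinely self-adjoint while simultaneously establishing density of $D(\tilde{A})$ in $\mathbf{V}$. The natural route is the Friedrichs construction: since $a$ is a closed, densely defined, symmetric, semibounded sesquilinear form on $\mathbf{H}$ whose form domain is the Hilbert space $\mathbf{V}$, there exists a unique self-adjoint operator realizing $a$, and its operator domain is necessarily a core for the form, hence dense in $\mathbf{V}$ in the $\mathbf{V}$-norm. I would simply cite Showalter \cite{Th} (already invoked in the paragraph preceding the proposition) or any standard reference on sectorial forms for the equivalence between ``continuous coercive symmetric form on $\mathbf{V}$'' and ``regularly accretive self-adjoint operator on $\mathbf{H}$''. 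All the substantive operator-theoretic content is concentrated in this single invocation; the remainder of the proof is the bookkeeping laid out above.
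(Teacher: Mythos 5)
Your argument is correct and follows the standard Lax--Milgram/Friedrichs-extension route, which is precisely what the paper itself relies on: it offers no proof, simply citing the reference \cite{Th} (which, incidentally, is Tanabe's \emph{Equations of Evolution}, not Showalter). Nothing further is needed.
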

The operators $\tilde{A}$ and $\hat{A}$ are different Friedrich's
extensions of the classical Stokes operator. In the remainder of the
paper, both of these operators will be denoted by $A$. It follows
from a general theorem of Lions \cite{LJ} that
$\mathbf{V}=D(A^{1/2})$. Let the orthogonal Helmhotz-Hodge
projection
$P_{\mathbf{H}}:\mathrm{L}^2(\Theta,\mathbb{R}^2)\rightarrow
\mathrm{H}$ and $A\u=-P_{\mathbf{H}}\Delta \u\;\forall\;\u\in D(A).$

Let us now state a regularity theorem. This give us an explicit
representation of $D(A)$.
\begin{lemma}
Let $\Theta$ be an admissible channel domain. Then, for a given
$\mathbf{g}\in R(A)=\mathbf{H}$, the Stokes problem of finding
$(\mathbf{u},p):\Theta\rightarrow\mathbb{R}^2\times\mathbb{R}$ such
that
\begin{align*}
-\Delta\u+\nabla p&=\mathbf{g}\;\textrm{ in }\;\Theta\\
\nabla\cdot\u&=0\;\textrm{ in }\;\Theta\\
\u\big|_{\partial\Theta}&=0,\;\;\u\rightarrow0\;\textrm{ as
}\;|x|\rightarrow\infty\;\textrm{ in }\;\mathbf{O}_i \textrm{ and
}\\ \int_{\Gamma}\u\cdot\n\d S&=0,
\end{align*}
has a unique solution $\u$ that satisfies $\u\in
D(A)=H^2(\Theta)\cap\mathbf{V}.$ Moreover, if we define the graph
norm
$$\|\u\|_{D(A)}=\|\u\|_{\mathrm{L}^2(\Theta)}+\|A\u\|_{\mathrm{L}^2(\Theta)},\;\forall\;\u\in
D(A),$$ then there exists $C>0$ such that,
$C\|\u\|_{H^2(\Theta)}\leq \|\u\|_{D(A)}\leq \|\u\|_{H^2(\Theta)}.$
\end{lemma}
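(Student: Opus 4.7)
The existence of a weak solution is essentially a consequence of the Lax--Milgram lemma applied to the bilinear form $a(\cdot,\cdot)$ on $\mathbf{V}^{*}$. Indeed, the coerciveness estimate $a(\u,\u)\geq C\|\u\|^{2}$ on $\mathbf{V}$ established just above (which relies on the Poincar\'e inequality available due to the finite cross section of $\Theta$) together with the continuity $|a(\u,\v)|\leq \|\nabla\u\|_{\mathrm{L}^{2}(\Theta)}\|\nabla\v\|_{\mathrm{L}^{2}(\Theta)}$ yields a unique $\u\in\mathbf{V}^{*}$ with $a(\u,\v)=(\mathbf{g},\v)_{\mathbf{H}}$ for every $\v\in\mathbf{V}^{*}$. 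The associated pressure $p$ is recovered from the weak formulation by de Rham's theorem, so that $-\Delta\u+\nabla p=\mathbf{g}$ holds in the sense of distributions. The decay $\u\to 0$ in the outlets together with the zero-flux condition is inherited from membership in $\mathbf{V}^{*}\subset H^{1}_{0}(\Theta)$, since divergence-free $H^{1}_{0}$ fields on a strip with zero integral across every cross section must decay at infinity.

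The main work is the $H^{2}$ regularity. I would use a partition of unity $\{\chi_{0},\chi_{1},\chi_{2}\}$ subordinate to the decomposition $\Theta=\Theta_{0}\cup\mathbf{O}_{1}\cup\mathbf{O}_{2}$, with $\chi_{0}$ compactly supported in an enlargement $\hat{\Theta}_{0}\Subset\Theta$ and $\chi_{i}$ supported in $\mathbf{O}_{i}$ away from $\Theta_{0}$. For $\chi_{0}\u$ the regularity reduces to the Stokes problem on the bounded $C^{\infty}$ domain $\hat{\Theta}_{0}$ with homogeneous boundary data, for which the classical Cattabriga/Agmon--Douglis--Nirenberg estimate yields
$$\|\chi_{0}\u\|_{H^{2}(\hat{\Theta}_{0})}\leq C\left(\|\mathbf{g}\|_{\mathrm{L}^{2}(\hat{\Theta}_{0})}+\|\u\|_{H^{1}(\hat{\Theta}_{0})}\right).$$
For $\chi_{i}\u$, defined on the semi-infinite strip $\mathbf{O}_{i}$ with straight sides, I would exploit the cylindrical geometry: either apply the Fourier transform in the axial direction and invert the resulting one-parameter family of Stokes systems on the cross section $(0,d_{i})$, or equivalently use tangential difference quotients in the axial variable on a covering by overlapping bounded cylinders. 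Either argument produces the uniform bound
$$\|\chi_{i}\u\|_{H^{2}(\mathbf{O}_{i})}\leq C\left(\|\mathbf{g}\|_{\mathrm{L}^{2}(\mathbf{O}_{i})}+\|\u\|_{H^{1}(\mathbf{O}_{i})}\right),$$
with a constant independent of the length of the strip. Summing these local bounds and invoking the $\mathbf{V}$-estimate for $\u$ already obtained from Lax--Milgram gives $\u\in H^{2}(\Theta)\cap\mathbf{V}$, i.e.\ $\u\in D(A)$.

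The norm equivalence follows quickly. The inequality $\|\u\|_{D(A)}\leq\|\u\|_{H^{2}(\Theta)}$ is immediate from $A\u=-P_{\mathbf{H}}\Delta\u$ and the continuity of $P_{\mathbf{H}}$ on $\mathrm{L}^{2}(\Theta)$. Conversely, the uniform regularity estimate derived above, combined with Poincar\'e to absorb the intermediate $H^{1}$ norm, gives $\|\u\|_{H^{2}(\Theta)}\leq C(\|A\u\|_{\mathrm{L}^{2}(\Theta)}+\|\u\|_{\mathrm{L}^{2}(\Theta)})=C\|\u\|_{D(A)}$.

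The main obstacle I expect is producing a genuinely uniform regularity estimate in the semi-infinite strips $\mathbf{O}_{i}$, since the standard Cattabriga theory is stated on bounded domains and one must verify that the constants do not blow up as the strip is extended. This is where the cylindrical structure of the outlets is essential: the Fourier symbol of the Stokes operator with respect to the axial variable remains uniformly elliptic on the cross section, which allows the translation-invariant regularity estimate to pass through without loss.
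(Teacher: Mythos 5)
The paper does not actually prove this lemma — it simply cites Theorem 3, Section 4 of Sritharan \cite{SS4} — and your sketch (Lax--Milgram on $\mathbf{V}$ for the weak solution, de Rham's theorem for the pressure, then a partition of unity with Cattabriga/ADN estimates on the bounded core and translation-invariant estimates on the straight outlets) is precisely the standard argument used there, so the approach is correct and essentially the same. The only points to tighten in a full writeup are that the cutoffs destroy the divergence-free condition and introduce $p\,\nabla\chi$ terms, so you must first bound $p$ in $\mathrm{L}^{2}_{loc}$ (via the Ne\v{c}as estimate applied to $\nabla p=\mathbf{g}+\Delta\u\in H^{-1}$) and invoke the inhomogeneous-divergence form of the Cattabriga estimates, and that the decay of $\u$ in the outlets follows from finiteness of the $H^{2}$-norm over the infinite strips rather than from the zero-flux condition, which is automatic for $\mathrm{L}^{2}$ divergence-free fields with vanishing normal trace.
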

\begin{proof}
For proof see Theorem 3, section 4 of \cite{SS4}.
\end{proof}

\subsection{The Bilinear Operator (Inertia Term)}
Let us now define the trilinear form and the bilinear operator and
their properties. The trilinear form is given by,
\begin{align}\label{eqn1014}
b(\u,\v,\phi)=\sum_{i,j=1}^2\int_{\Theta}\u_i\frac{\partial\v_j}{\partial
x_i}\phi_j\d \x.
\end{align}
It is well known that trilinear form
$b(\cdot,\cdot,\cdot):\mathbf{V}\times\mathbf{V}\times\mathbf{V}\rightarrow\mathbb{R}$
is continuous and for all $\u,\v,\phi\in\mathbf{V}$ we have
\begin{align*}
b(\u,\v,\phi)=-b(\u,\phi,\v)\;\textrm{ and }\;b(\u,\v,\v)=0.
\end{align*}
The following lemma is standard.
\begin{lemma}\label{buu} \cite{SS4}
There exist continuous bilinear operators,
$B(\cdot,\cdot):\mathbf{V}\times\mathbf{V}\rightarrow\mathbf{V}'$
and
$B^*(\cdot,\cdot):\mathbf{V}\times\mathbf{V}\rightarrow\mathbf{V}'$
such that
\begin{align}\label{eqn1016}
b(\u,\v,\phi)=(B(\u,\v),\phi)_{\mathbf{V}'\times\mathbf{V}}=(B^*(\v,\phi),\u)_{\mathbf{V}'\times\mathbf{V}},\;\forall\;\u,\v,\phi\in\mathbf{V}.
\end{align}
Moreover,
\begin{align}\label{eqn1017}
\|B(\u,\v)\|_{\mathbf{V}'}&\leq
C\|\u\|^{1/2}_{\mathrm{L}^2(\Theta)}\|\nabla\u\|^{1/2}_{\mathrm{L}^2(\Theta)}\|\v\|^{1/2}_{\mathrm{L}^2(\Theta)}\|\nabla\v\|^{1/2}_{\mathrm{L}^2(\Theta)},
\\ \|B^*(\v,\phi)\|_{\mathbf{V}'}&\leq
C\|\nabla\v\|_{\mathrm{L}^2(\Theta)}\|\phi\|^{1/2}_{\mathrm{L}^(\Theta)}\|\nabla\phi\|_{\mathrm{L}^2(\Theta)}^{1/2}
\end{align}
and
\begin{align}\label{eqn1018}
B^*(\v,\phi)=-B^*(\phi,\v),\quad \forall\;\phi,\v\in\mathbf{V}.
\end{align}
\end{lemma}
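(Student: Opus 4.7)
The plan is to realize $B$ and $B^*$ as Riesz representatives of the linear functionals obtained by freezing two of the three arguments of $b$, and then to extract the announced $\mathbf{V}'$-bounds from a single H\"older--Ladyzhenskaya estimate on $b$. The key analytic input is the two-dimensional Ladyzhenskaya inequality
\begin{equation*}
\|\varphi\|_{\mathrm{L}^4(\Theta)}\leq C\|\varphi\|_{\mathrm{L}^2(\Theta)}^{1/2}\|\nabla\varphi\|_{\mathrm{L}^2(\Theta)}^{1/2},\qquad\forall\,\varphi\in H_0^1(\Theta),
\end{equation*}
which follows from the Gagliardo--Nirenberg--Sobolev embedding on $\mathbb{R}^2$ via extension by zero and hence is available on our admissible channel $\Theta$.

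Applying H\"older's inequality with exponents $(4,2,4)$ followed by Ladyzhenskaya gives
\begin{equation*}
|b(\u,\v,\phi)|\leq\|\u\|_{\mathrm{L}^4(\Theta)}\|\nabla\v\|_{\mathrm{L}^2(\Theta)}\|\phi\|_{\mathrm{L}^4(\Theta)}\leq C\|\u\|^{1/2}_{\mathrm{L}^2}\|\nabla\u\|^{1/2}_{\mathrm{L}^2}\|\nabla\v\|_{\mathrm{L}^2}\|\phi\|^{1/2}_{\mathrm{L}^2}\|\nabla\phi\|^{1/2}_{\mathrm{L}^2},
\end{equation*}
so $b$ extends to a continuous trilinear form on $\mathbf{V}\times\mathbf{V}\times\mathbf{V}$. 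For fixed $(\u,\v)\in\mathbf{V}\times\mathbf{V}$ the map $\phi\mapsto b(\u,\v,\phi)$ is a continuous linear functional on $\mathbf{V}$, hence by the Riesz representation theorem it is represented by a unique element $B(\u,\v)\in\mathbf{V}'$; the same argument applied to $\u\mapsto b(\u,\v,\phi)$ produces $B^*(\v,\phi)\in\mathbf{V}'$. Taking the supremum over $\|\u\|_{\mathbf{V}}\leq 1$ in the displayed estimate, combined with Poincar\'e's inequality to absorb $\|\u\|_{\mathrm{L}^2}^{1/2}\|\nabla\u\|_{\mathrm{L}^2}^{1/2}$ into $\|\u\|_{\mathbf{V}}$, yields directly the stated bound for $\|B^*(\v,\phi)\|_{\mathbf{V}'}$.

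To obtain the more symmetric bound on $B$ and the antisymmetry of $B^*$, I would first prove the cancellation identity $b(\u,\v,\phi)=-b(\u,\phi,\v)$ on all of $\mathbf{V}^3$. On $j(\Theta)$ this is a single integration by parts in which the boundary term vanishes by compact support and the ``extra'' term is killed by $\nabla\cdot\u=0$; the identity is then propagated to $\mathbf{V}$ by density using the continuity estimate just established. Interchanging $\v$ and $\phi$ in the H\"older--Ladyzhenskaya bound now produces $|b(\u,\v,\phi)|\leq C\|\u\|^{1/2}_{\mathrm{L}^2}\|\nabla\u\|^{1/2}_{\mathrm{L}^2}\|\v\|^{1/2}_{\mathrm{L}^2}\|\nabla\v\|^{1/2}_{\mathrm{L}^2}\|\nabla\phi\|_{\mathrm{L}^2}$, and taking the supremum over $\|\phi\|_{\mathbf{V}}\leq 1$ gives \eqref{eqn1017}. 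Finally, \eqref{eqn1018} is an immediate consequence of the cancellation identity: for every $\u\in\mathbf{V}$,
\begin{equation*}
(B^*(\v,\phi),\u)_{\mathbf{V}'\times\mathbf{V}}=b(\u,\v,\phi)=-b(\u,\phi,\v)=-(B^*(\phi,\v),\u)_{\mathbf{V}'\times\mathbf{V}}.
\end{equation*}
The only delicate point is the density argument extending the integration-by-parts identity from $j(\Theta)$ to $\mathbf{V}$ on the unbounded domain $\Theta$; this is the principal obstacle, but it is standard once the continuity of $b$ on $\mathbf{V}\times\mathbf{V}\times\mathbf{V}$ is in place.
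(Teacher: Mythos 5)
Your proof is correct and follows the standard route: H\"older with exponents $(4,2,4)$ combined with the two-dimensional Ladyzhenskaya inequality (Lemma \ref{LL} of the paper) gives continuity of $b$ on $\mathbf{V}^3$, the antisymmetry $b(\u,\v,\phi)=-b(\u,\phi,\v)$ is obtained on $j(\Theta)$ by integration by parts and propagated by density, and the operators $B$, $B^*$ are the resulting bounded linear functionals, with the two stated $\mathbf{V}'$-bounds read off by taking suprema and absorbing the lower-order factors via Poincar\'e. The paper gives no proof of this lemma, deferring entirely to Sritharan \cite{SS4}, and your argument is the standard one that reference supplies; the only cosmetic point is that identifying a bounded linear functional with an element of the dual $\mathbf{V}'$ needs no appeal to the Riesz representation theorem (which would instead produce a representative in $\mathbf{V}$), but this does not affect the validity of the argument.
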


Now we will provide a general setting for estimating the terms
$b(\w,\cdot,\cdot)$ and $b(\cdot,\w,\cdot)$, where $\w$ is the basic
vector field constructed in section $3$.
\begin{lemma}\label{bwv}
The trilinear form
$b(\w,\cdot,\cdot):\mathbf{V}\times\mathbf{V}\rightarrow\mathbb{R}$
is continuous and satisfies,
\begin{align}\label{eqn1019}
(i)\;\;
b(\w,\u,\v)&=-b(\w,\v,\u),\;\forall\;\u,\v\in\mathbf{V}\;\textrm{
and }\\
(ii)\;\; b(\w,\u,\u)&=0,\;\forall\;\u\in\mathbf{V}.
\end{align}
\end{lemma}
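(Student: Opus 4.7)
The plan is to establish the three assertions in the stated order: continuity of $b(\w,\cdot,\cdot)$ on $\mathbf{V}\times\mathbf{V}$, the antisymmetry identity (i), and then (ii) as an immediate corollary of (i). Throughout the argument, the statements are to be understood for almost every $t\in[0,T]$ and almost every $\omega\in\Omega$, since $\w$ is a space--time--random vector field.

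For the continuity, I would exploit the $\mathrm{L}^\infty$ bound on the constructed basic vector field $\w$. From properties (iii) and (iv) of $\w$ in Section 3, combined with the bounds (\ref{eqn1001a}) and (\ref{eqn1002}), one has $\|\w(\cdot,t,\omega)\|_{\mathrm{L}^\infty(\Theta)}\leq\beta_{10}(t,\omega)+\beta_{11}(\omega)+\beta_{12}(\omega)<\infty$ almost surely. Hölder's inequality then gives
$$|b(\w,\u,\v)|\leq\|\w\|_{\mathrm{L}^\infty(\Theta)}\|\nabla\u\|_{\mathrm{L}^2(\Theta)}\|\v\|_{\mathrm{L}^2(\Theta)},$$
and invoking the Poincaré inequality valid on admissible channel domains, $\|\v\|_{\mathrm{L}^2(\Theta)}\leq C\|\nabla\v\|_{\mathrm{L}^2(\Theta)}$, yields the continuity bound $|b(\w,\u,\v)|\leq C\|\w\|_{\mathrm{L}^\infty(\Theta)}\|\u\|\,\|\v\|$ on $\mathbf{V}\times\mathbf{V}$.

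For (i), the key ingredients are $\nabla\cdot\w=0$ in $\Theta$ in the sense of distributions and the vanishing of $\w$ on $\partial\Theta$. I would first verify the identity on the dense subspace $j(\Theta)\times j(\Theta)$: for smooth compactly supported divergence-free fields $\u,\v$, integration by parts gives
$$b(\w,\u,\v)=\sum_{i,j=1}^{2}\int_{\Theta}w_i\,\partial_i u_j\,v_j\,\d\x=-\sum_{i,j=1}^{2}\int_{\Theta}\partial_i(w_i v_j)\,u_j\,\d\x=-b(\w,\v,\u),$$
where no boundary contribution appears because $\u,\v$ are compactly supported inside $\Theta$, and the term involving $\partial_i w_i$ drops out by incompressibility of $\w$. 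By the definition of $\mathbf{V}$ as the $H^1(\Theta)$-closure of $j(\Theta)$, combined with the joint continuity on $\mathbf{V}\times\mathbf{V}$ established in the previous step, the identity extends to arbitrary $\u,\v\in\mathbf{V}$. Finally, (ii) is immediate: taking $\v=\u$ in (i) gives $b(\w,\u,\u)=-b(\w,\u,\u)$, hence $b(\w,\u,\u)=0$.

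The main subtlety is that $\w$ does not belong to $\mathrm{L}^2(\Theta)$, so classical estimates of the type $\|B(\w,\v)\|_{\mathbf{V}'}\leq C\|\w\|_{\mathrm{L}^2}^{1/2}\|\nabla\w\|_{\mathrm{L}^2}^{1/2}\cdots$ used in Lemma \ref{buu} are unavailable. The remedy is precisely to replace the $\mathrm{L}^2$-based anisotropic interpolation inequality by the $\mathrm{L}^\infty$-bound on $\w$; this is what makes the continuity estimate go through in spite of the infinite energy. Once continuity is in hand, the antisymmetry extends by density exactly as in the classical bounded-domain case, since the compactly supported approximations see only the bounded-region portion of $\w$ where all classical manipulations are valid.
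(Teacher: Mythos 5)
Your proof is correct and follows the standard route: the paper itself gives no argument here, simply citing Lemma 4, Section 4 of Sritharan \cite{SS4}, where the same ingredients are used --- the pointwise bound $\sup_{\Theta}|\w|\leq\beta_{10}+\beta_{11}+\beta_{12}$ (finite for a.e.\ $t$ and a.e.\ $\omega$ by the construction of the basic vector field), H\"older and Poincar\'e for the continuity estimate $|b(\w,\u,\v)|\leq C\|\w\|_{\mathrm{L}^{\infty}(\Theta)}\|\u\|\,\|\v\|$, and integration by parts against the divergence-free $\w$ on the dense subspace $j(\Theta)$ followed by a density extension, with (ii) an immediate consequence of (i). The only cosmetic quibble is your closing remark: the extension to all of $\mathbf{V}\times\mathbf{V}$ is justified by the global continuity bound you already established together with the density of $j(\Theta)$ in $\mathbf{V}$, not by the approximants ``seeing only the bounded-region portion of $\w$'' (their supports need not stay in a fixed compact set); this does not affect the validity of the argument.
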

\begin{proof}
For proof see Lemma 4, section 4 of \cite{SS4}.
\end{proof}
\begin{lemma}\label{bvw1}
The form
$b(\cdot,\w,\cdot):\mathbf{V}\times\mathbf{V}\rightarrow\mathbb{R}$
is continuous.
\end{lemma}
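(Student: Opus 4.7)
The plan is to exploit the piecewise structure of $\w$: it is essentially unidirectional and $x$-independent in the outlets $\mathbf{O}_1,\mathbf{O}_2$, while living in a bounded region $\Theta_0$ where standard Sobolev embeddings apply. For fixed $(t,\omega)$, I would split
\begin{equation*}
b(\u,\w,\v)=\int_{\Theta_0}u_i\partial_i w_j\,v_j\,\d\x+\sum_{k=1}^{2}\int_{\mathbf{O}_k}u_i\partial_i w_j\,v_j\,\d\x
\end{equation*}
and estimate each of the three pieces separately.

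For the outlet pieces, I would use that in $\mathbf{O}_k$ the constructed field has the form $\w(\x,t,\omega)=(w_1^{(k)}(y,t,\omega),0)$, so the only nonzero entry of $\nabla\w$ is $\partial_y w_1^{(k)}$. By property (iv) of the basic vector field (see (\ref{eqn1003})) this entry is uniformly bounded in $\x$ by $\beta_{2k}(t,\omega)$. A direct application of Cauchy--Schwarz therefore gives
\begin{equation*}
\Bigl|\int_{\mathbf{O}_k}u_i\partial_iw_j\,v_j\,\d\x\Bigr|\leq \beta_{2k}(t,\omega)\,\|u_2\|_{\mathrm{L}^2(\mathbf{O}_k)}\|v_1\|_{\mathrm{L}^2(\mathbf{O}_k)},
\end{equation*}
and the Poincar\'e inequality available on admissible channel domains upgrades this to a bound in terms of $\|\u\|\|\v\|$.

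For the $\Theta_0$ piece, which is the more delicate one, I cannot use an $\mathrm{L}^\infty$ bound on $\nabla\w$ because only an $\mathrm{L}^2$ spatial estimate is at hand there (namely $\|\nabla\w\|_{\mathrm{L}^2(\Theta_0)}=\beta_{20}(\omega)$ after integrating in time). Instead I would apply H\"older's inequality with exponents $(4,2,4)$:
\begin{equation*}
\Bigl|\int_{\Theta_0}u_i\partial_iw_j\,v_j\,\d\x\Bigr|\leq \|\u\|_{\mathrm{L}^4(\Theta_0)}\|\nabla\w(\cdot,t,\omega)\|_{\mathrm{L}^2(\Theta_0)}\|\v\|_{\mathrm{L}^4(\Theta_0)},
\end{equation*}
and then invoke the two-dimensional Sobolev embedding $H^1(\Theta_0)\hookrightarrow \mathrm{L}^4(\Theta_0)$, available since $\Theta_0$ is bounded with smooth boundary. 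This yields a bound of the form $C\,\|\nabla\w(\cdot,t,\omega)\|_{\mathrm{L}^2(\Theta_0)}\,\|\u\|\,\|\v\|$.

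Summing the three contributions produces, for almost every $(t,\omega)$, an estimate
\begin{equation*}
|b(\u,\w(\cdot,t,\omega),\v)|\leq C\bigl(\beta_{10}(t,\omega)+\beta_{20}(\omega)+\beta_{21}(t,\omega)+\beta_{22}(t,\omega)\bigr)\|\u\|\,\|\v\|,
\end{equation*}
which is the asserted continuity of $b(\cdot,\w,\cdot)$ on $\mathbf{V}\times\mathbf{V}$. The main obstacle is not any single step but the need to recognise that $\w$ genuinely has two distinct regularity profiles: an $\mathrm{L}^\infty$-in-space derivative in the outlets (coming from the heat equation construction of section~3) and only an $\mathrm{L}^2$-in-space derivative on $\Theta_0$, so that the estimate must be tailored to each region.
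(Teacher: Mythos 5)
Your proof is correct and takes essentially the same route as the paper's (the paper defers to Lemma 5 of \cite{SS4}, but your splitting into $\Theta_0$ plus the outlets, with the uniform bound $\beta_{2k}(t,\omega)$ on $\nabla\w$ in $\mathbf{O}_k$ and the H\"older--Sobolev (or Ladyzhenskaya) estimate on the bounded piece $\Theta_0$, is precisely the estimate $|b(\u,\w,\v)|\leq \beta_{20}|\u|^{1/2}\|\u\|^{1/2}|\v|^{1/2}\|\v\|^{1/2}+(\beta_{21}+\beta_{22})|\u||\v|$ that the paper itself states and uses in Sections 6 and 7). The only cosmetic point is that $\beta_{10}$, which bounds $\w$ rather than $\nabla\w$, plays no role in your argument and need not appear in the final constant.
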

\begin{proof}
For proof see Lemma 5, section 4 of \cite{SS4}.
\end{proof}
\begin{lemma}
There exists a continuous linear operator
$B_1(\w)\in\mathscr{L}(\mathbf{V};\mathbf{V}')$ such that
\begin{align*}
(i)\;\;
b(\w,\u,\v)=(B_1(\w)\u,\v)_{\mathbf{V}'\times\mathbf{V}},\;\forall\;\u,\v\in\mathbf{V}.
\end{align*}
Moreover,
\begin{align*}
(ii)\;\;(B_1(\w)\u,\v)_{\mathbf{V}'\times\mathbf{V}}=-(B_1(\w)\v,\u)_{\mathbf{V}'\times\mathbf{V}},\;\forall\;\u,\v\in\mathbf{V}.
\end{align*}
\end{lemma}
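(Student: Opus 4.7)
The plan is to construct $B_1(\w)$ as the natural linearization of the trilinear form $b(\w,\cdot,\cdot)$ in its second argument, and then to identify the antisymmetry as a direct consequence of Lemma \ref{bwv}(i).

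First, I would fix $\u\in\mathbf{V}$ and examine the linear functional $\Lambda_{\u}:\v\mapsto b(\w,\u,\v)$ on $\mathbf{V}$. By Lemma \ref{bwv}, the trilinear form $b(\w,\cdot,\cdot)$ is continuous on $\mathbf{V}\times\mathbf{V}$, so there exists a constant $M>0$ (depending on $\w$) such that $|b(\w,\u,\v)|\leq M\|\u\|\|\v\|$ for all $\u,\v\in\mathbf{V}$. Hence $\Lambda_{\u}\in\mathbf{V}'$ with $\|\Lambda_{\u}\|_{\mathbf{V}'}\leq M\|\u\|$. I then define $B_1(\w):\mathbf{V}\to\mathbf{V}'$ by $B_1(\w)\u=\Lambda_{\u}$, so that by construction
\begin{align*}
(B_1(\w)\u,\v)_{\mathbf{V}'\times\mathbf{V}}=b(\w,\u,\v),\quad\forall\,\u,\v\in\mathbf{V}.
\end{align*}
The linearity of $B_1(\w)$ in $\u$ is immediate from the linearity of $b(\w,\cdot,\cdot)$ in its second argument, and the uniform bound above gives $\|B_1(\w)\|_{\mathscr{L}(\mathbf{V};\mathbf{V}')}\leq M$, proving continuity.

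Part (ii) is then immediate: for any $\u,\v\in\mathbf{V}$, using the representation just established together with Lemma \ref{bwv}(i),
\begin{align*}
(B_1(\w)\u,\v)_{\mathbf{V}'\times\mathbf{V}}=b(\w,\u,\v)=-b(\w,\v,\u)=-(B_1(\w)\v,\u)_{\mathbf{V}'\times\mathbf{V}}.
\end{align*}
Taking $\v=\u$ recovers Lemma \ref{bwv}(ii) as a consistency check.

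There is essentially no obstacle here: the lemma is a routine packaging step, turning a continuous bilinear form into an operator via the canonical isomorphism between continuous bilinear forms on $\mathbf{V}\times\mathbf{V}$ and bounded linear maps $\mathbf{V}\to\mathbf{V}'$. The only point requiring care is to avoid confusing this with a Riesz-type identification inside $\mathbf{H}$; since $\mathbf{V}\hookrightarrow\mathbf{H}\equiv\mathbf{H}'\hookrightarrow\mathbf{V}'$, one must keep the duality pairing strictly between $\mathbf{V}'$ and $\mathbf{V}$ rather than passing through the inner product on $\mathbf{H}$, which would require additional regularity of $B_1(\w)\u$ that is not asserted.
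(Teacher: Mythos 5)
Your proof is correct and is exactly the intended argument: the paper itself only cites Lemma 6 of \cite{SS4} for this statement, and the standard packaging you carry out — using the continuity of $b(\w,\cdot,\cdot)$ from Lemma \ref{bwv} to define $B_1(\w)\u$ as the bounded functional $\v\mapsto b(\w,\u,\v)$, then deducing (ii) from the antisymmetry in Lemma \ref{bwv}(i) — is precisely what that reference does. Your closing remark about keeping the pairing in $\mathbf{V}'\times\mathbf{V}$ rather than identifying through $\mathbf{H}$ is a sensible caution and consistent with the paper's framework.
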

\begin{proof}
For proof see Lemma 6, section 4 of \cite{SS4}.
\end{proof}
\begin{lemma}\label{bvw}
There exists a continuous linear operators
$B_2(\w)\in\mathscr{L}(\mathbf{V};\mathbf{V}')$ and
$B_2(\w)^*\in\mathscr{L}(\mathbf{V}';\mathbf{V})$ such that
\begin{align*}
(i)\;\;(B_2(\w)\u,\v)_{\mathbf{V}'\times\mathbf{V}}=(\u,B_2(\w)^*\v)_{\mathbf{V}'\times\mathbf{V}}
=b(\u,\w,\v),\;\forall\;\u,\v\in\mathbf{V}.
\end{align*}
Moreover,
\begin{align*}
(ii)\;\;\|B_2(\w)\|_{\mathscr{L}(\mathbf{V};\mathbf{V'})}=\|B_2(\w)^*\|_{\mathscr{L}(\mathbf{V'};\mathbf{V})}\leq
\beta_{20}(\omega)+\sum_{i=1}^2\beta_{2i}(t,\omega).
\end{align*}
\end{lemma}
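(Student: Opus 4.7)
The plan is to define $B_2(\w)$ as the operator associated to the continuous bilinear form $(\u,\v)\mapsto b(\u,\w,\v)$ on $\mathbf{V}\times\mathbf{V}$, whose continuity was already established in Lemma \ref{bvw1}. For fixed $\u\in\mathbf{V}$, the map $\v\mapsto b(\u,\w,\v)$ is linear and, once boundedness is verified, represents an element $B_2(\w)\u\in\mathbf{V}'$ by
\[
(B_2(\w)\u,\v)_{\mathbf{V}'\times\mathbf{V}}=b(\u,\w,\v).
\]
Linearity of $\u\mapsto B_2(\w)\u$ is inherited from linearity of $b$ in the first slot, so (i) reduces to the boundedness estimate that simultaneously yields (ii).

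The core estimate would use the decomposition $\Theta=\Theta_0\cup\mathbf{O}_1\cup\mathbf{O}_2$ and the pointwise controls on $\nabla\w$ collected in Section 3. I would write
\[
|b(\u,\w,\v)|\le\int_{\Theta_0}|\u||\nabla\w||\v|\d\x+\sum_{i=1}^{2}\int_{\mathbf{O}_i}|\u||\nabla\w||\v|\d\x,
\]
and pull out $\sup_{\x\in\Theta_0}|\nabla\w|\le\beta_{20}(\omega)$ from (\ref{eqn1001a}) on the bounded piece, and $\sup_{\x\in\mathbf{O}_i}|\nabla\w|\le\beta_{2i}(t,\omega)$ from (\ref{eqn1003}) on each outlet. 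Applying Cauchy--Schwarz to the remaining $|\u||\v|$ integrals and recombining yields
\[
|b(\u,\w,\v)|\le\Bigl(\beta_{20}(\omega)+\sum_{i=1}^{2}\beta_{2i}(t,\omega)\Bigr)\|\u\|_{\mathrm{L}^2(\Theta)}\|\v\|_{\mathrm{L}^2(\Theta)},
\]
and the Poincar\'e lemma for admissible channel domains (equivalence of the $H^1$ norm with the Dirichlet norm on $\mathbf{V}$) then produces the claimed bound on $\|B_2(\w)\|_{\mathscr{L}(\mathbf{V};\mathbf{V}')}$.

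For the transpose $B_2(\w)^*$, standard bilinear-form duality supplies the associated operator characterised by
\[
(\u,B_2(\w)^*\v)_{\mathbf{V}'\times\mathbf{V}}=b(\u,\w,\v),
\]
and the equality $\|B_2(\w)^*\|=\|B_2(\w)\|$ is the usual equality between the norm of a bounded bilinear form and the norms of its two associated linear operators. Consequently, the bound proved for $B_2(\w)$ transfers verbatim to $B_2(\w)^*$, giving (ii) for both.

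The only genuine subtlety I anticipate is bookkeeping: the stated estimate carries no multiplicative constant in front of $\beta_{20}+\beta_{21}+\beta_{22}$, which suggests working consistently with the Dirichlet norm on $\mathbf{V}$ so that the Poincar\'e constant is absorbed into the choice of equivalent norm rather than visibly into the estimate. All other ingredients are immediate consequences of Lemma \ref{bvw1} together with the regularity and pointwise bounds on $\w$ already recorded in (\ref{eqn1001a}) and (\ref{eqn1003}), so no fresh analytic input is required.
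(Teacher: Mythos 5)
The paper itself does not spell this proof out (it defers to Lemma~7, section~4 of \cite{SS4}), so your proposal is being measured against the argument that the surrounding text actually relies on, and there is one genuine gap in it: your treatment of the $\Theta_0$ piece misreads what $\beta_{20}$ is. By (\ref{eqn1001a}), $\beta_{20}(\omega)=\|\nabla\w\|_{\mathrm{L}^2(0,T;\mathrm{L}^2(\Theta_0))}$ is an $\mathrm{L}^2$-type norm of $\nabla\w$ on the bounded region, \emph{not} $\sup_{\x\in\Theta_0}|\nabla\w|$ (the pointwise sup in (\ref{eqn1001a}) is of $|\w|$ itself, giving $\beta_{10}$). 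Moreover no such sup is available: on $\Theta_0$ one only has $\w(t)\in H^2(\Theta_0)$ for a.e.\ $t$, so $\nabla\w(t)\in H^1(\Theta_0)$, which in two dimensions does not embed into $\mathrm{L}^\infty(\Theta_0)$. Consequently the step ``pull out $\sup_{\Theta_0}|\nabla\w|\le\beta_{20}$ and Cauchy--Schwarz the rest'' fails, and the intermediate inequality $|b(\u,\w,\v)|\le(\beta_{20}+\beta_{21}+\beta_{22})\|\u\|_{\mathrm{L}^2}\|\v\|_{\mathrm{L}^2}$ is not justified (and is not the estimate the paper uses). The correct handling of the bounded piece is H\"older with exponents $(4,2,4)$, $\int_{\Theta_0}|\u||\nabla\w||\v|\,\d\x\le\|\u\|_{\mathrm{L}^4}\|\nabla\w\|_{\mathrm{L}^2(\Theta_0)}\|\v\|_{\mathrm{L}^4}$, followed by the Ladyzhenskaya interpolation of Lemma~\ref{LL}; this is precisely why the bound quoted and used later in the paper (in Theorem~\ref{energy1} and Lemma~\ref{mon}) has the form $\beta_{20}|\u|^{1/2}\|\u\|^{1/2}|\v|^{1/2}\|\v\|^{1/2}+(\beta_{21}+\beta_{22})|\u||\v|$ rather than a product of $\mathrm{L}^2$ norms. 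After Poincar\'e, this still yields the operator-norm bound of part~(ii) (up to the constants you already flag), so the conclusion is reachable, but not by the route you describe.

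The rest of your argument is sound and matches the intended one: on the outlets (\ref{eqn1003}) really is a spatial sup of $|\nabla\w|$, so pulling out $\beta_{2i}(t,\omega)$ and applying Cauchy--Schwarz there is fine; defining $B_2(\w)$ from the continuous bilinear form of Lemma~\ref{bvw1} and obtaining $B_2(\w)^*$ with equal operator norm by standard duality is exactly what is needed. Only the $\Theta_0$ estimate needs to be replaced as indicated.
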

\begin{proof}
For proof see Lemma 7, section 4 of \cite{SS4}.
\end{proof}
By taking the Helmhotz-Hodge projection $P_{\mathbf{H}}$, we get
$$B(\u,\v)=P_{\mathbf{H}}(\u\cdot\nabla\v),
B_1(\w)\v=P_{\mathbf{H}}(\w\cdot\nabla\v),
B_2(\w)\v=P_{\mathbf{H}}(\v\cdot\nabla\w), \;\forall\; \u,\v\in
\mathbf{V}.$$

\begin{remark}\label{rem121}
For the notational convenience from now onwards we use the symbols
$\overline{\beta}_{10}(t)=\mathbb{E}[\beta_{10}(t,\omega)],$
$\overline{\beta}_{1j}=\mathbb{E}[\beta_{1j}(\omega)]$ for $j=1,2$,
$\overline{\beta}^2_{20}=\mathbb{E}[\beta^2_{20}(\omega)]$ and
$\overline{\beta}_{2j}(t)=\mathbb{E}[\beta_{2j}(t,\omega)]$ for
$j=1,2$.
\end{remark}

\section{Construction of the Perturbed Vector Field}\setcounter{equation}{0}
Let us now consider the system (\ref{1})-(\ref{5}) and introduce the
following change of variables:
$$\mathbf{u}(\mathbf{x},t,\omega)=\mathbf{v}(\mathbf{x},t,\omega)+\mathbf{w}(\mathbf{x},t,\omega)\textrm{ and }
p(\x,t,\omega)=q(\x,t,\omega)+P(\x,t,\omega),$$ where
$\mathbf{w}(\cdot,\cdot,\cdot)$ is the basic vector field. The
regularity properties of this basic vector field imply the following
properties on $\beta_{1j}$ and $\beta_{2j},$ for $j=0,1,2$:
$$\beta_{11}(\omega),\beta_{12}(\omega),\beta_{20}(\omega)\in\mathrm{L}^2(\Omega)\textrm{
and }
\beta_{10}(t,\omega),\beta_{21}(t,\omega),\beta_{22}(t,\omega)\in\mathrm{L}^2(\Omega;\mathrm{L}^2(0,T)).$$
Our problem is to find
$$(\mathbf{v},q):\Theta\times
[0,T]\times\Omega\rightarrow\mathbb{R}^2\times \mathbb{R}$$ such
that
\begin{align}\label{6}
\v_t+\v\cdot\nabla \v+\v\cdot\nabla\w +\w\cdot\nabla \v &=-\nabla q
+\nu \Delta \v +\mathbf{f}_{\w}+\dot{\mathscr{G}}(\x,t)\nonumber\\
&\quad\textrm{ in }\Theta\times (0,T)\times\Omega,\\ \nabla\cdot
\v&= 0\textrm{ in } \Theta \times (0,T)\times\Omega,\\
\v\big|_{\partial \Theta\times[0,T]\times\Omega}&=
0,\\\v&\rightarrow 0\textrm{ as
} |\mathbf{x}|\rightarrow \infty\textrm{ in }\mathbf{O}_i, i=1,2,\\
\v(\x,0,\omega)&=0,(\x,\omega)\in \Theta\times\Omega\quad \textrm{ and }\\
\int_{\Gamma}\v\cdot\n\d S&=0,
\end{align}
where
\begin{align}\label{7}
\mathbf{f}_{\w}=\nu\Delta\w-\w\cdot\nabla\w-\w_{t}-\nabla P.
\end{align}
From the construction of the basic vector field it is clear that the
supp$\{\mathbf{f}_{\w}\}\Subset\Theta$. Also one can notice that
$\mathbf{f}_{\w}\equiv 0$ for $\mathbf{O}_i\backslash
\hat{\Theta}_0$. Moreover, form the estimate
$$\w_t-\nu\Delta\w+\w\cdot\nabla\w\in
\mathrm{L}^2\left(\Omega;\mathrm{L}^2(0,T;H^{-1}(\Theta_0))\right),$$
we have $\mathbf{f}_{\w}\in
\mathrm{L}^2\left(\Omega;\mathrm{L}^2(0,T;\mathbf{V}')\right).$
A-priori estimates for the solution can be obtained by assuming
smoothness of $\v$ and sufficient decay at infinity.

Let us now take the Helmhotz-Hodge projection on the equation
(\ref{6}) and using the fact that $(\nabla q,\v)=0$, $\forall\;
\v\in \mathbf{H}$, one can reduce the equation (\ref{6}) to
\begin{align}\label{eqn111}
\d\v&=(F(\v(t))+P_{\mathbf{H}}\mathbf{f}_{\w}(t))\d t+g(t)\d W(t),\\
\textrm{where }\;\;F(\v(t))&=-\nu
A\v-B(\v,\v)-B_1(\w)\v-B_2(\w)\v\nonumber\\ A\v&=-P_{\mathbf{H}}\Delta\v\nonumber\\
B(\u,\v)&=P_{\mathbf{H}}(\u\cdot\nabla\v)\nonumber\\
B_1(\w)\v&=P_{\mathbf{H}}(\w\cdot\nabla\v)\nonumber\\
B_2(\w)\v&=P_{\mathbf{H}}(\v\cdot\nabla\w).\nonumber
\end{align}
See Appendix of Mikulevicius and Rozovskii \cite{MR} for more
details about the properties of the projection operator.

Let the noise process be represented as a series $\d
\mathscr{G}_k=\sum_k g_k(\x,t)\d W_k(t,\omega)$, where
$g=(g_1,g_2,\cdots)$ is and $\ell^2$- valued function and $W_k$ are
mutually independent standard one dimensional Brownian motions. The
stochastic term $g\d W$ is thus an $\mathbf{H}$ - valued Wiener
process with a trace-class covariance operator denoted by
$g^*g=g^*g(t)$ given by
\begin{align*}
\left(g^*g(t)u,v\right)&=\sum_k
\left(g_k(t),u\right)\left(g_k(t),v\right),\nonumber\\
\textrm{Tr}(g^*g(t))&=\sum_k \left|g_k(t)\right|^2<\infty.
\end{align*}
This means that the mapping
\begin{equation*}
v\rightarrow \int_0^T\left(g(t)\d W(t),v\right):=\sum_k\int_0^T
\left(g_k(t),v\right)\d W_k(t)
\end{equation*}
is a continuous linear functional on $\mathbf{H}$ with probability
$1$ and the noise is the formal time-derivative of the process
$P_{\mathbf{H}}\mathscr{G}(t)=\int_0^t g(t)\d W(t)$. A
multiplicative noise of the form $g(\u(\x,t))\d W(t)$, where
$g(\u(\x,t))$ is a continuous operator from $\mathbf{V}$ into
$\mathrm{L}^2(0,T;\ell^2(\mathbf{H}))$, can also be considered as
the random forcing. Here $\dot{\mathscr{G}}(\cdot)$ is formally
written and it is the time-derivative of $\int_0^{\cdot}g(s)\d
W(s)$. We have $\int_0^{\cdot}g(s)\d
W(s)\in\mathrm{L}^2(\Omega;C([0,T];\mathbf{H}))$ implies its
time-derivative $\dot{\mathscr{G}}(\cdot)$ satisfies
$\dot{\mathscr{G}}(\cdot)\in\mathrm{L}^2(\Omega;\mathrm{W}^{-1,\infty}(0,T;\mathbf{H})),$
since $\partial_t$ is linear continuous from $C([0,T];\mathbf{H})$
into $W^{-1,\infty}(0,T;\mathbf{H})$.

\section{A-Priori Estimates for the Perturbed Vector
Field}\setcounter{equation}{0} Let
$\mathbf{H}_n=span\{e_1,e_2,\cdots,e_n\},$ where $\{e_j\}$ is any
fixed orthonormal basis in $\mathbf{H}$ with $e_j\in D(A)$. Let
$P_n$ denote the orthogonal projection of $\mathbf{H}$ into
$\mathbf{H}_n$. Define $\v^n=P_n\v$. Let us define $\v^n$ as the
solution of the following stochastic differential equation in the
variational form such that for each $v\in\mathbf{H}_n$,
\begin{align}\label{main}
\d(\v^n(t),v)&=(F(\v^n(t)),v)\d
t+(P_{\mathbf{H}}\mathbf{f}_{\w}(t),v)\d
t+\sum_k(g_k(t),v)\d W_k(t),\\
\v^n(0)&=P_n\v(0)\;=\;0,\nonumber
\end{align}
where $F(\v)=-\nu A\v-B(\v,\v)-B_1(\w)\v-B_2(\w)\v.$

It can be shown that for all $n\geq 1$, there exists an adapted
process $\v^n\in C([0,T];\H_n)$ a. s. such that $\v^n$ satisfies
(\ref{main}). For proof see corollary 2.1 of Albeverio,
Brze\'{z}niak, Wu \cite{ABW}, page 128 of Brze\'{z}niak, Hausenblas,
Zhu \cite{BHZ}, Lemma 3.1 of Capi\'{n}ski and Peszat \cite{CaP},
Proposition 3.2 of Manna, Menaldi and Sritharan \cite{MMS}, Theorem
3.1.1 of Pr\'{e}v\^{o}t and R\"{o}ckner \cite{PrR}.

\begin{theorem}\label{energy1}
Let $\Theta$ be an admissible channel domain. Under the above
mathematical setting, let $\v^n(t)$ be an adapted process in
$C([0,T];\H_n)$ which solves the stochastic ODE (\ref{main}). Then
we have the following a-priori estimates: For $\delta>0$
\begin{align}\label{35}
&\mathbb{E}\left|\v^n(\cdot,t)\right|^2e^{-\delta t}+\nu
\mathbb{E}\left(\int_0^t\|\v^n(\cdot,s)\|^2e^{-\delta s}\d s\right)
\nonumber\\&\qquad\quad \leq
C_1\left(T,\nu,\|\mathscr{F}\|_{_{\mathrm{L}^2(\Omega;H^1(0,T))}},\delta,\int_0^T\Tr(g^*g)(t)e^{-\delta
t}\d t\right)
\end{align}
and
\begin{align}\label{36}
&\mathbb{E}\left[\sup_{0\leq t\leq
T}\left|\v^n(\cdot,t)\right|^2e^{-\delta t}\right]+2\nu
\mathbb{E}\left(\int_0^T\|\v^n(\cdot,t)\|^2e^{-\delta t}\d s\right)
\nonumber\\&\quad\quad\leq
C_2\left(T,\nu,\|\mathscr{F}\|_{_{\mathrm{L}^2(\Omega;H^1(0,T))}},\delta,\int_0^T\Tr(g^*g)(t)e^{-\delta
t}\d t\right).
\end{align}
\end{theorem}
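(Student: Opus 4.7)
The plan is to apply It\^o's formula to the process $e^{-\delta t}|\v^n(t)|^2$. Since $\v^n$ lives in the finite-dimensional space $\H_n$ and satisfies \eqref{main}, this is justified, and gives
\begin{align*}
e^{-\delta t}|\v^n(t)|^2
&= \int_0^t e^{-\delta s}\bigl[-\delta|\v^n|^2 + 2(F(\v^n),\v^n) + 2(P_{\mathbf H}\mathbf f_{\w},\v^n) + \Tr(g^*g)\bigr]\d s \\
&\qquad + 2\sum_k\int_0^t e^{-\delta s}(g_k(s),\v^n)\d W_k(s).
\end{align*}
The four bilinear/trilinear contributions in $(F(\v^n),\v^n)$ are then treated as follows. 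The terms $(B(\v^n,\v^n),\v^n)=b(\v^n,\v^n,\v^n)=0$ and $(B_1(\w)\v^n,\v^n)=b(\w,\v^n,\v^n)=0$ vanish by the antisymmetry properties established in Lemma \ref{buu} and Lemma \ref{bwv}. The Stokes term yields $-\nu\|\v^n\|^2$ after using $(A\v^n,\v^n)=\|\nabla\v^n\|^2$. The only surviving nonlinearity in $\w$ is $(B_2(\w)\v^n,\v^n)=b(\v^n,\w,\v^n)$, which by Lemma \ref{bvw} is bounded by $(\beta_{20}(\omega)+\beta_{21}(t,\omega)+\beta_{22}(t,\omega))\|\v^n\|\,|\v^n|$; Young's inequality with weight $\nu/2$ absorbs the $\|\v^n\|^2$ part into the dissipation, leaving a term of the form $C\nu^{-1}(\beta_{20}^2+\beta_{21}^2+\beta_{22}^2)|\v^n|^2$.

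Next I would handle the forcing $(\mathbf f_{\w},\v^n)$. Since $\mathbf f_{\w}\in\mathrm L^2(\Omega;\mathrm L^2(0,T;\mathbf V'))$ with support in $\Theta_0$, the duality pairing and Young's inequality give another absorbable bound $(\nu/4)\|\v^n\|^2 + C\nu^{-1}\|\mathbf f_{\w}\|_{\mathbf V'}^2$. Taking expectations and choosing $\delta$ large enough so that $\delta \geq C\nu^{-1}\overline{\eta}^2$ dominates the random coefficient multiplying $|\v^n|^2$ (using Remark \ref{rem11} together with Proposition \ref{prop11} to control the $\beta$-type quantities by $\|\mathscr F\|_{\mathrm L^2(\Omega;H^1(0,T))}^2$), the stochastic integral is a mean-zero martingale and one obtains \eqref{35}, with the constant $C_1$ explicitly expressible in terms of $T$, $\nu$, $\delta$, $\|\mathscr F\|_{\mathrm L^2(\Omega;H^1(0,T))}$ and $\int_0^T\Tr(g^*g)(t)e^{-\delta t}\d t$. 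Actually, for the term $\delta|\v^n|^2$ to dominate the random coefficients, one either chooses $\delta$ pathwise depending on $\eta$ (permissible since Remark \ref{rem11} shows $\eta_{2i}^2\in L^1(\Omega)$) or handles the resulting product $e^{-\delta s}\eta^2|\v^n|^2$ by the stochastic Gronwall lemma.

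For the supremum estimate \eqref{36}, I would return to the It\^o identity before taking expectations and pass to the supremum over $[0,T]$ on both sides. The deterministic part is controlled as above; for the martingale part, the Burkholder--Davis--Gundy inequality gives
\begin{align*}
\mathbb E\Bigl[\sup_{0\le t\le T}\Bigl|\sum_k\!\int_0^t\! e^{-\delta s}(g_k,\v^n)\d W_k\Bigr|\Bigr]
\le C\,\mathbb E\Bigl[\Bigl(\int_0^T e^{-2\delta s}\Tr(g^*g)|\v^n|^2\d s\Bigr)^{1/2}\Bigr],
\end{align*}
and then $ab\le \tfrac12 a^2+\tfrac12 b^2$ lets me absorb $\tfrac12\mathbb E\sup_{t\le T}e^{-\delta t}|\v^n(t)|^2$ into the left-hand side, leaving only $\mathbb E\int_0^T\Tr(g^*g)(t)e^{-\delta t}\d t$ on the right.

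The main obstacle is the $B_2(\w)$ term: unlike the classical Navier--Stokes case, $b(\v^n,\w,\v^n)$ does not vanish and its coefficient $\beta_{20}(\omega)+\beta_{21}(t,\omega)+\beta_{22}(t,\omega)$ is a random process, not a deterministic constant. This is exactly why the exponential weight $e^{-\delta t}$ appears in the statement: it is needed so that, after absorbing the diffusive part via Young's inequality, the remaining lower-order term can be controlled by choosing $\delta$ large relative to $\nu^{-1}\sup_t(\beta_{20}^2+\beta_{21}^2+\beta_{22}^2)$, and the quantitative link between $\sup_t\beta_{2i}^2$ and the flux norm $\|\mathscr F\|_{\mathrm L^2(\Omega;H^1(0,T))}$ provided by Proposition \ref{prop11} and Remark \ref{rem11} is what allows the final constants $C_1,C_2$ to depend on the flux in the stated way.
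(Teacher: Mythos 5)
Your overall skeleton (It\^{o}'s formula for $e^{-\delta t}|\v^n(t)|^2$, the vanishing of $b(\v^n,\v^n,\v^n)$ and $b(\w,\v^n,\v^n)$, Young's inequality to absorb $\frac{\nu}{2}\|\v^n\|^2$ out of $b(\v^n,\w,\v^n)$, and Burkholder--Davis--Gundy for the supremum bound) matches the paper. The gap is in how you dispose of the surviving lower-order term
$\int_0^{t}\bigl(\frac{\beta_{20}^2(\omega)}{2\nu}+\beta_{21}(s,\omega)+\beta_{22}(s,\omega)\bigr)|\v^n(s)|^2e^{-\delta s}\d s$.
You propose to dominate its coefficient by taking $\delta$ large, possibly pathwise. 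This cannot work as stated: the theorem asserts the bound for \emph{every} fixed $\delta>0$, and Remark \ref{rem11} only gives $\eta_{2i}\in\mathrm{L}^2(\Omega)$, so $\sup_{t}(\beta_{20}^2+\beta_{21}^2+\beta_{22}^2)$ is an unbounded random variable which no deterministic $\delta$ dominates; a pathwise $\delta(\omega)$ would make the weight $e^{-\delta t}$ random and is incompatible with the deterministic constant $C_1(\dots,\delta,\dots)$ in the statement. Your reading of the role of $e^{-\delta t}$ is also off: in the paper the weight is spent entirely on the forcing term, via $2|(P_{\mathbf{H}}\mathbf{f}_{\w},\v^n)|\le\delta|\v^n|^2+\delta^{-1}|\mathbf{f}_{\w}|^2$ with the $\delta|\v^n|^2$ cancelling against the $-\delta e^{-\delta t}|\v^n|^2\d t$ produced by differentiating the weight; it plays no role in taming the $\beta$'s.

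The paper's actual mechanism for the random coefficient is the standing assumption that the random flux and the external forcing are mutually independent: the basic field $\w$, hence the $\beta$'s, is independent of the $\sigma$-algebra $\mathcal{G}$ generated by $\v^n$. Conditioning on $\mathcal{G}$ inside the expectation replaces $\frac{\beta_{20}^2(\omega)}{2\nu}+\beta_{21}(s,\omega)+\beta_{22}(s,\omega)$ by its deterministic mean $\frac{\overline{\beta}_{20}^2}{2\nu}+\overline{\beta}_{21}(s)+\overline{\beta}_{22}(s)$, after which the ordinary Gronwall lemma (with a deterministic coefficient controlled through Proposition \ref{prop11} by $\|\mathscr{F}\|_{\mathrm{L}^2(\Omega;H^1(0,T))}$) closes the estimate. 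Your alternative escape route, the ``stochastic Gronwall lemma,'' might be made to work but is left entirely unsubstantiated; without either that or the independence/conditioning step, the expectation of the product of the two correlated, unbounded random quantities $\beta(\cdot,\omega)$ and $|\v^n(\cdot,\omega)|^2$ cannot be split. You also omit the localization by the stopping time $\tau_N$ that the paper uses to guarantee the stochastic integral is a genuine zero-mean martingale before the a priori bound is available; this is minor in a sketch but should be stated.
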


\begin{proof}
Let us consider the function $e^{-\delta t}|x|^2$ and apply
It\^{o}'s lemma (see Theorem 1, page 155 Gy\"{o}ngy and Krylov
\cite{GyKr}, Metivier \cite{Me}) to the process $\v^n(t)$, for any
$\delta>0$
\begin{align}\label{37}
&\d\left[e^{-\delta t}|\v^n|^2\right]=-\delta e^{-\delta
t}|\v^n|^2\d
t-2\left[b(\v^n,\v^n,\v^n)+b(\v^n,\w,\v^n)\right.\nonumber\\&\qquad\qquad\qquad\left.+b(\w,\v^n,\v^n)+\nu(\Delta
\v^n,\v^n)\right]e^{-\delta t}\d
t+\left[(P_{\mathbf{H}}\mathbf{f}_{\w},\v^n)\right]e^{-\delta t}\d
t\nonumber\\&\qquad\qquad\qquad+2\sum_k(g_k(t),\v^n)e^{-\delta t}\d
W_k(t)+e^{-\delta t}\mathrm{Tr}(g^*g)(t)\d t.
\end{align}
By applying the properties of the trilinear form,
$(\Delta\v^n,\v^n)=-\|\v^n\|^2$ and
$2|(P_{\mathbf{H}}\mathbf{f}_{\w},\v^n)|\leq\delta
|\v^n|^2+\frac{1}{\delta}|\mathbf{f}_{\w}|^2$ in (\ref{37}), we get
\begin{align}\label{39}
&\d\left[[e^{-\delta t}|\v^n|^2\right]+2\nu\|\v^n\|^2e^{-\delta t}\d
t\leq -2b(\v^n,\w,\v^n)e^{-\delta t}\d
t+\frac{1}{\delta}|\mathbf{f}_{\w}|^2e^{-\delta t}\d
t\nonumber\\&\qquad\qquad\qquad\qquad+e^{-\delta
t}\mathrm{Tr}(g^*g)(t)\d t+2\sum_k(g_k(t),\v^n)e^{-\delta t}\d
W_k(t).
\end{align}
Let $\mathcal{G}$ be the $\sigma$-algebra generated by $\v^n$ in the
probability space $(\Omega;\mathcal{F},\mathcal{F}_t,\mathbb{P})$.
Since we have assumed that the external random forcing and the
random flux are mutually independent processes,  the constructed
vector field $\w$ is independent of $\mathcal{G}$.

Let us define the stopping time $\tau_N$ by
$$\tau_N=\inf\left\{t:|\v^n(t)|^2+\int_0^t\|\v^n(s)\|^2\d
s>N\right\}.$$  Note that $\tau_N$ is adapted to $\mathcal{G}$ and
$\w$ is independent of $\mathcal{G}$.

Now let us integrate the inequality (\ref{39}) in $t$ from $0$ to
$\t$ to get
\begin{align}\label{40a}
&|\v^n(\t)|^2e^{-\delta t}+2\nu\int_0^{\t}\|\v^n(s)\|^2e^{-\delta
s}\d s\nonumber\\&\leq
2\int_0^{\t}\left|\left(B_2(\w)\v^n,\v^n\right)(s)\right|e^{-\delta
s}\d s+\frac{1}{\delta}\int_0^{t}|\mathbf{f}_{\w}|^2e^{-\delta s}\d
s\nonumber\\&\quad+\int_0^{t}\mathrm{Tr}(g^*g)(s)e^{-\delta s}\d
s+2\sum_k\int_0^{\t}(g_k(s),\v^n)e^{-\delta s}\d W_k(s).
\end{align}
Also for the term $\left(B_2(\w)\v^n,\v^n\right)=b(\v^n,\w,\v^n)$,
we have the estimate
$$|b(\u,\w,\v)|\leq
\beta_{20}(\omega)|\u|^{1/2}\|\u\|^{1/2}|\v|^{1/2}\|\v\|^{1/2}+\left(\beta_{21}(t,\omega)+\beta_{22}(t,\omega)\right)|\u||\v|.$$
By using the above estimate and using Young's inequality, we have
for all $\v^n\in\mathbf{V}$
\begin{align*}
|b(\v^n,\w,\v^n)|&\leq
\beta_{20}(\omega)|\v^n|\|\v^n\|+\left(\beta_{21}(t,\omega)+\beta_{22}(t,\omega)\right)|\v^n|^2\nonumber\\
&\leq
\frac{\nu}{2}\|\v^n\|^2+\left(\frac{\beta^2_{20}(\omega)}{2\nu}+\beta_{21}(t,\omega)+\beta_{22}(t,\omega)\right)|\v^n|^2.
\end{align*}
Hence (\ref{40a}) becomes
\begin{align}\label{40b}
&|\v^n(\t)|^2e^{-\delta t}+\nu\int_0^{\t}\|\v^n(s)\|^2e^{-\delta
s}\d s\nonumber\\&\leq
2\int_0^{\t}\left(\frac{\beta^2_{20}(\omega)}{2\nu}+\beta_{21}(s,\omega)+\beta_{22}(s,\omega)\right)|\v^n|^2e^{-\delta
s}\d s+\frac{1}{\delta}\int_0^{t}|\mathbf{f}_{\w}|^2e^{-\delta s}\d
s\nonumber\\&\quad+\int_0^{t}\mathrm{Tr}(g^*g)(s)e^{-\delta s}\d
s+2\sum_k\int_0^{\t}(g_k(s),\v^n)e^{-\delta s}\d W_k(s).
\end{align}
Let us take conditional expectation on both sides of (\ref{40b})
with respect to the $\sigma$-algebra $\mathcal{G}$ to get
\begin{align}\label{40c}
&\mathbb{E}\left(\left.|\v^n(\t)|^2e^{-\delta
t}\right|\mathcal{G}\right)+\nu\mathbb{E}\left(\left.\int_0^{\t}\|\v^n(s)\|^2e^{-\delta
s}\d s\right|\mathcal{G}\right)\nonumber\\&\leq
2\mathbb{E}\left(\left.\int_0^{\t}\left(\frac{\beta^2_{20}(\omega)}{2\nu}+\beta_{21}(s,\omega)+\beta_{22}(s,\omega)\right)|\v^n|^2e^{-\delta
s}\d
s\right|\mathcal{G}\right)\nonumber\\&\quad+\frac{1}{\delta}\mathbb{E}\left(\left.\int_0^{t}|\mathbf{f}_{\w}|^2e^{-\delta
s}\d
s\right|\mathcal{G}\right)+\int_0^{t}\mathrm{Tr}(g^*g)(s)e^{-\delta
s}\d
s\nonumber\\&\quad+2\mathbb{E}\left(\left.\sum_k\int_0^{\t}(g_k(s),\v^n)e^{-\delta
s}\d W_k(s)\right|\mathcal{G}\right).
\end{align}
In (\ref{40c}), let us use the $\mathcal{G}$-measurability of $\v^n$
and independence of $\w$ with respect to $\mathcal{G}$ to get
\begin{align}\label{40d}
&|\v^n(\t)|^2e^{-\delta t}+\nu\int_0^{\t}\|\v^n(s)\|^2e^{-\delta
s}\d s\nonumber\\&\leq
2\mathbb{E}\left(\left.\int_0^{\t}\left(\frac{\beta^2_{20}(\omega)}{2\nu}+\beta_{21}(s,\omega)+\beta_{22}(s,\omega)\right)|\v^n|^2e^{-\delta
s}\d
s\right|\mathcal{G}\right)\nonumber\\&\quad+\frac{1}{\delta}\mathbb{E}\left(\int_0^{t}|\mathbf{f}_{\w}|^2e^{-\delta
s}\d s\right)+\int_0^{t}\mathrm{Tr}(g^*g)(s)e^{-\delta s}\d
s\nonumber\\&\quad+2\sum_k\int_0^{\t}(g_k(s),\v^n)e^{-\delta s}\d
W_k(s).
\end{align}
Let us take expectation on both sides of the inequality (\ref{40d})
to obtain
\begin{align}\label{40e}
&\mathbb{E}\left(|\v^n(\t)|^2e^{-\delta
t}\right)+\nu\mathbb{E}\left(\int_0^{\t}\|\v^n(s)\|^2e^{-\delta s}\d
s\right)\nonumber\\&\leq
2\mathbb{E}\left\{\mathbb{E}\left(\left.\int_0^{\t}\left(\frac{\beta^2_{20}(\omega)}{2\nu}+\beta_{21}(s,\omega)+\beta_{22}(s,\omega)\right)|\v^n|^2e^{-\delta
s}\d
s\right|\mathcal{G}\right)\right\}\nonumber\\&\quad+\frac{1}{\delta}\mathbb{E}\left(\int_0^{t}|\mathbf{f}_{\w}|^2e^{-\delta
s}\d s\right)+\int_0^{t}\mathrm{Tr}(g^*g)(s)e^{-\delta s}\d
s\nonumber\\&\quad+2\mathbb{E}\left(\sum_k\int_0^{\t}(g_k(s),\v^n)e^{-\delta
s}\d W_k(s)\right).
\end{align}
Note that the last term in the inequality (\ref{40e}) is a
martingale with zero expectation, one obtains
\begin{align}\label{40}
&\mathbb{E}\left[|\v^n(\t)|^2e^{-\delta
t}\right]+\nu\mathbb{E}\left(\int_0^{\t}\|\v^n(s)\|^2e^{-\delta s}\d
s\right)\nonumber\\&\leq
2\mathbb{E}\left\{\mathbb{E}\left(\left.\int_0^{\t}\left(\frac{\beta^2_{20}(\omega)}{2\nu}+\beta_{21}(s,\omega)+\beta_{22}(s,\omega)\right)|\v^n|^2e^{-\delta
s}\d
s\right|\mathcal{G}\right)\right\}\nonumber\\&\quad+\frac{1}{\delta}\mathbb{E}\left(\int_0^{t}|\mathbf{f}_{\w}|^2e^{-\delta
s}\d s\right)+\int_0^{t}\mathrm{Tr}(g^*g)(s)e^{-\delta s}\d s.
\end{align}
Now let us consider
$$\mathbb{E}\left\{\mathbb{E}\left.\left(\int_0^{\t}\left[\frac{\beta^2_{20}(\omega)}{2\nu}+\beta_{21}(s,\omega)+\beta_{22}(s,\omega)\right]|\v^n|^2e^{-\delta
s}\d s\right|{\mathcal{G}}\right)\right\}$$ and use the properties
of conditional expectation, independence of  $\w$ with the
$\sigma$-algebra $\mathcal{G}$, generated by $\v^n$ and H\"{o}lder's
inequality to get
\begin{align}\label{qe1}
&\mathbb{E}\left\{\mathbb{E}\left.\left(\int_0^{\t}\left[\frac{\beta^2_{20}(\omega)}{2\nu}+\beta_{21}(s,\omega)+\beta_{22}(s,\omega)\right]|\v^n|^2e^{-\delta
s}\d s\right|{\mathcal{G}}\right)\right\}\nonumber\\& =
\mathbb{E}\left.\left\{\mathbb{E}\left(\int_0^{T}\left(\chi_{[0,\t]}|\v^n(s)|^2e^{-\delta
s}\right)
\left[\frac{\beta^2_{20}(\omega)}{2\nu}+\beta_{21}(s,\omega)+\beta_{22}(s,\omega)\right]\d
s\right|{\mathcal{G}}\right)\right\}\nonumber\\& =
\mathbb{E}\left\{\int_0^{T}\left(\chi_{[0,\t]}|\v^n(s)|^2e^{-\delta
s}\right)
\mathbb{E}\left(\left.\left[\frac{\beta^2_{20}(\omega)}{2\nu}+\beta_{21}(s,\omega)+\beta_{22}(s,\omega)\right]\right|{\mathcal{G}}\right)\d
s\right\}\nonumber\\&
=\mathbb{E}\left\{\int_0^{T}\left(\chi_{[0,\t]}|\v^n(s)|^2e^{-\delta
s}\right)
\mathbb{E}\left(\left[\frac{\beta^2_{20}(\omega)}{2\nu}+\beta_{21}(s,\omega)+\beta_{22}(s,\omega)\right]\right)\d
s\right\}\nonumber\\&=
\mathbb{E}\left(\int_0^{T}\left(\chi_{[0,\t]}|\v^n(s)|^2e^{-\delta
s}\right)
\left[\frac{\overline{\beta}^2_{20}}{2\nu}+\overline{\beta}_{21}(s)+\overline{\beta}_{22}(s)\right]\d
s\right)\nonumber\\& \leq \sup_{0\leq t\leq
T}\left[\frac{\overline{\beta}^2_{20}}{2\nu}+\overline{\beta}_{21}(t)+\overline{\beta}_{22}(t)\right]
\mathbb{E}\left(\int_0^{T}\left(\chi_{[0,\t]}|\v^n(s)|^2e^{-\delta
s}\right) \d s\right)\nonumber\\& = \sup_{0\leq t\leq
T}\left[\frac{\overline{\beta}^2_{20}}{2\nu}+\overline{\beta}_{21}(t)+\overline{\beta}_{22}(t)\right]
\mathbb{E}\left(\int_0^{\t}\left(|\v^n(s)|^2e^{-\delta s}\right) \d
s\right).
\end{align}
Note here that $\sup_{0\leq t\leq
T}\beta_{2j}^2(t,\omega)=\eta_{2j}^2(\omega),$ for $j=1,2$ is
bounded from (\ref{qeta}) and
$\eta_{2j}(\cdot)\in\mathrm{L}^2(\Omega)$. Hence, from inequality
(\ref{40}), we get
\begin{align}\label{41}
&\mathbb{E}\left[|\v^n(\t)|^2e^{-\delta
t}\right]+\nu\mathbb{E}\left(\int_0^{\t}\|\v^n(s)\|^2e^{-\delta s}\d
s\right) \nonumber\\&\leq
\frac{1}{\delta}\mathbb{E}\left(\int_0^{t}|\mathbf{f}_{\w}|^2e^{-\delta
s}\d s\right)+\int_0^{t}\mathrm{Tr}(g^*g)(s)e^{-\delta s}\d
s\nonumber\\&\quad +\sup_{0\leq s\leq
T}\left[\frac{\overline{\beta}^2_{20}}{\nu}+2\overline{\beta}_{21}(s)+2\overline{\beta}_{22}(s)\right]
\mathbb{E}\left(\int_0^{\t}\left(|\v^n(s)|^2e^{-\delta s}\right) \d
s\right).
\end{align}
In particular, we have
\begin{align*}
&\mathbb{E}\left[|\v^n(\t)|^2e^{-\delta t}\right]\leq
\frac{1}{\delta}\mathbb{E}\left(\int_0^{t}|\mathbf{f}_{\w}|^2e^{-\delta
s}\d s\right)+\int_0^{t}\mathrm{Tr}(g^*g)(s)e^{-\delta s}\d
s\\&\qquad\quad\quad+\sup_{0\leq s\leq
T}\left[\frac{\overline{\beta}^2_{20}}{\nu}+2\overline{\beta}_{21}(s)+2\overline{\beta}_{22}(s)\right]
\mathbb{E}\left(\int_0^{\t}\left(|\v^n(s)|^2e^{-\delta s}\right) \d
s\right).
\end{align*}
An application of Gronwall's lemma yields
\begin{align}\label{43}
\mathbb{E}\left[|\v^n(\t)|^2e^{-\delta t}\right]&\leq
\left(\frac{1}{\delta}\mathbb{E}\left(\int_0^{t}|\mathbf{f}_{\w}|^2e^{-\delta
s}\d s\right)+\int_0^{t}\mathrm{Tr}(g^*g)(s)e^{-\delta s}\d
s\right)\nonumber\\&\quad\exp\left(\sup_{0\leq t\leq
T}\left[\frac{\overline{\beta}^2_{20}}{\nu}+2\overline{\beta}_{21}(t)+2\overline{\beta}_{22}(t)\right]T\right).
\end{align}
By applying (\ref{43}) in (\ref{41}), we get
\begin{align}\label{44}
&\mathbb{E}\left[|\v^n(\t)|^2e^{-\delta
t}\right]+\nu\mathbb{E}\left(\int_0^{\t}\|\v^n(s)\|^2e^{-\delta s}\d
s\right)\nonumber\\&\leq
\left(\frac{1}{\delta}\mathbb{E}\left(\int_0^{t}|\mathbf{f}_{\w}|^2e^{-\delta
s}\d s\right)+\int_0^{t}\mathrm{Tr}(g^*g)(s)e^{-\delta s}\d
s\right)\nonumber\\&\quad\left[1+\exp\left(\sup_{0\leq t\leq
T}\left[\frac{\overline{\beta}^2_{20}}{\nu}+2\overline{\beta}_{21}(t)+2\overline{\beta}_{22}(t)\right]T
\right)\right.\nonumber\\&\quad\left.\left(\sup_{0\leq t\leq
T}\left[\frac{\overline{\beta}^2_{20}}{\nu}+2\overline{\beta}_{21}(t)+2\overline{\beta}_{22}(t)\right]T\right)\right].
\end{align}
Now let us take $N\rightarrow\infty$ so that $\t\rightarrow t$ and
by using the estimates (\ref{eqn1004}) and (\ref{eqn1005}) to obtain
\begin{align*}
&\mathbb{E}\left[|\v^n(t)|^2e^{-\delta
t}\right]+\nu\mathbb{E}\left(\int_0^t\|\v^n(s)\|^2e^{-\delta s}\d
s\right)\\&\qquad\qquad\qquad\leq
C_1\left(T,\nu,\|\mathscr{F}\|_{_{\mathrm{L}^2(\Omega;H^1(0,T))}},\delta,\int_0^{t}\mathrm{Tr}(g^*g)(s)e^{-\delta
s}\d s\right).
\end{align*}
In the estimate (\ref{40d}), taking supremum up to $\T$ before
taking the expectation, we obtain
\begin{align}\label{46}
&\mathbb{E}\left[\sup_{0\leq t\leq \T}|\v^n(t)|^2e^{-\delta
t}\right]+\nu \mathbb{E}\left(\int_0^{\T}\|\v^n(t)\|^2e^{-\delta
t}\d t\right)\nonumber\\&\leq
\frac{1}{\delta}\mathbb{E}\left(\int_0^{T}|\mathbf{f}_{\w}|^2e^{-\delta
t}\d t\right)+\int_0^{T}\Tr(g^*g)(t)e^{-\delta t}\d
t\nonumber\\&\quad+2\mathbb{E}\left[\sup_{0\leq t\leq
\T}\int_0^{t}\sum_k(g_k(s),\v^n(s))e^{-\delta s}\d
W_k(s)\right]\nonumber\\&\quad +2\mathbb{E}\left\{\sup_{0\leq t\leq
\T}
\mathbb{E}\left(\left.\int_0^{t}\left(\frac{\beta^2_{20}(\omega)}{2\nu}+\beta_{21}(s,\omega)+\beta_{22}(s,\omega)\right)|\v^n|^2e^{-\delta
s}\d s\right|\mathcal{G}\right)\right\}.
\end{align}
Now by applying Burkholder-Davis-Gundy inequality and Young's
inequality to the term $2\mathbb{E}\left[\sup_{0\leq t\leq
\T}\int_0^{t}\sum_k(g_k(s),\v^n(s))e^{-\delta s}\d W_k(s)\right],$
one gets
\begin{align}\label{47}
&2\mathbb{E}\left[\sup_{0\leq t\leq
\T}\left|\int_0^t\sum_k(g_k(s),\v^n(s))e^{-\delta s}\d
W_k(s)\right|\right]\nonumber\\&\leq
2\sqrt{2}\mathbb{E}\left(\int_0^{\T}\Tr(g^*g)(t)|\v^n(t)|^2e^{-2\delta
t}\d t\right)^{1/2}\nonumber\\&\leq 2\sqrt{2}
\mathbb{E}\left[\left(\sup_{0\leq t\leq \T}|\v^n(t)|^2e^{-\delta
t}\right)^{1/2}\left(\int_0^{\T}\Tr(g^*g)(t)e^{-\delta t}\d
t\right)^{1/2}\right]\nonumber\\&\leq
\frac{1}{2}\mathbb{E}\left(\sup_{0\leq t\leq
\T}|\v^n(t)|^2e^{-\delta t}\right)+4\int_0^{T}\Tr(g^*g)(t)e^{-\delta
t}\d t.
\end{align}
From (\ref{qe1}), it can be easily seen that
\begin{align}\label{qe2}
&\mathbb{E}\left\{\left.\sup_{0\leq t\leq
\T}\mathbb{E}\left(\int_0^{t}\left[\frac{\beta^2_{20}(\omega)}{2\nu}+\beta_{21}(s,\omega)+\beta_{22}(s,\omega)\right]|\v^n|^2e^{-\delta
s}\d s\right|{\mathcal{G}}\right)\right\}\nonumber\\&\leq
\sup_{0\leq t\leq
T}\left[\frac{\overline{\beta}^2_{20}}{2\nu}+\overline{\beta}_{21}(t)+\overline{\beta}_{22}(t)\right]
\mathbb{E}\left(\int_0^{\T}\sup_{0\leq s\leq t}|\v^n(s)|^2e^{-\delta
s} \d s\right).
\end{align} Hence (\ref{46})
becomes
\begin{align}\label{48}
&\mathbb{E}\left[\sup_{0\leq t\leq \T}|\v^n(t)|^2e^{-\delta
t}\right]+2\nu\mathbb{E}\left(\int_0^T\|\v^n(t)\|^2e^{-\delta t}\d
t\right)\nonumber\\&\leq
\frac{2}{\delta}\mathbb{E}\left(\int_0^T|\mathbf{f}_{\w}|^2e^{-\delta
t}\d t\right)+10\int_0^{T}\Tr(g^*g)(t)e^{-\delta t}\d
t\nonumber\\&\quad+2\sup_{0\leq t\leq
T}\left[\frac{\overline{\beta}^2_{20}}{2\nu}+\overline{\beta}_{21}(t)+\overline{\beta}_{22}(t)\right]
\mathbb{E}\left(\int_0^{\T}\sup_{0\leq s\leq t}|\v^n(s)|^2e^{-\delta
s} \d s\right).
\end{align}
In particular, we have
\begin{align*}
&\mathbb{E}\left[\sup_{0\leq t\leq \T}|\v^n(t)|^2e^{-\delta
t}\right]\nonumber\\&\leq
\frac{2}{\delta}\mathbb{E}\left(\int_0^{T}|\mathbf{f}_{\w}|^2e^{-\delta
t}\d t\right)+10\int_0^{T}\Tr(g^*g)(t)e^{-\delta t}\d
t\nonumber\\&\quad+\sup_{0\leq t\leq
T}\left[\frac{\overline{\beta}^2_{20}}{\nu}+2\overline{\beta}_{21}(t)+2\overline{\beta}_{22}(t)\right]
\mathbb{E}\left(\int_0^{\T}\left(\sup_{0\leq s\leq
t}|\v^n(s)|^2e^{-\delta s} \right)\d s\right).
\end{align*}
Applying Gronwall's inequality, one gets
\begin{align}\label{50}
&\mathbb{E}\left[\sup_{0\leq t\leq \T}|\v^n(t)|^2e^{-\delta
t}\right]\nonumber\\&\leq
\left(\frac{2}{\delta}\mathbb{E}\left(\int_0^{T}|\mathbf{f}_{\w}|^2e^{-\delta
t}\d t\right)+10\int_0^{T}\Tr(g^*g)(t)e^{-\delta t}\d
t\right)\nonumber\\&\quad\exp\left(\sup_{0\leq t\leq
T}\left[\frac{\overline{\beta}^2_{20}}{\nu}+2\overline{\beta}_{21}(t)+2\overline{\beta}_{22}(t)\right]T\right).
\end{align}
Hence by applying (\ref{50}) in (\ref{48}), we deduce that
\begin{align}\label{51}
&\mathbb{E}\left[\sup_{0\leq t\leq \T}|\v^n(t)|^2e^{-\delta
t}\right]+2\nu\int_0^{\T}\mathbb{E}\|\v^n(t)\|^2e^{-\delta t}\d
t\nonumber\\&\leq
\left(\frac{2}{\delta}\mathbb{E}\left(\int_0^{T}|\mathbf{f}_{\w}|^2e^{-\delta
t}\d t\right)+10\int_0^{T}\Tr(g^*g)(t)e^{-\delta t}\d
t\right)\nonumber\\&\quad\left[1+2\exp\left(\sup_{0\leq t\leq
T}\left[\frac{\overline{\beta}^2_{20}}{\nu}+2\overline{\beta}_{21}(t)+2\overline{\beta}_{22}(t)\right]T
\right)\right.\nonumber\\&\quad\left.\left(\sup_{0\leq t\leq
T}\left[\frac{\overline{\beta}^2_{20}}{\nu}+2\overline{\beta}_{21}(t)+2\overline{\beta}_{22}(t)\right]T\right)\right].
\end{align}
Let us now take $N\rightarrow\infty$ so that $\T\rightarrow T$ and
by using (\ref{eqn1004}) and (\ref{eqn1005}) to get
\begin{align*}
&\mathbb{E}\left[\sup_{0\leq t\leq T}|\v|^2e^{-\delta
t}\right]+2\nu\mathbb{E}\left(\int_0^T\|\v\|^2e^{-\delta t}\d
t\right) \nonumber\\&\leq
C_2\left(\delta,T,\nu,\|\mathscr{F}\|_{_{\mathrm{L}^2(\Omega;H^1(0,T))}},\int_0^{T}\Tr(g^*g)(t)e^{-\delta
t}\d t\right).
\end{align*}
Hence we get the required result.
\end{proof}

\begin{theorem}\label{energy3}
Let $\Theta$ be an admissible channel domain. Under the above
mathematical setting, let $\v^n(t)$ be an adapted process in
$C([0,T];\H_n)$ which solves the stochastic ODE (\ref{main}). Then
we have the following a-priori estimates:
\begin{align}\label{8}
&\mathbb{E}\left|\v^n(\cdot,t)\right|^2+\nu
\mathbb{E}\left(\int_0^t\|\v^n(\cdot,s)\|^2\d
s\right)\nonumber\\&\qquad\qquad \leq
C_3\left(T,\nu,\|\mathscr{F}\|_{_{\mathrm{L}^2(\Omega;H^1(0,T))}},\int_0^T
\mathrm{Tr}(g^*g)(t)\d t\right)
\end{align}
and
\begin{align}\label{9}
&\mathbb{E}\left(\sup_{0\leq t\leq
T}\left|\v^n(\cdot,t)\right|^2+2\nu \int_0^T\|\v^n(\cdot,t)\|^2\d
s\right)\nonumber\\&\qquad\qquad\leq
C_4\left(T,\nu,\|\mathscr{F}\|_{_{\mathrm{L}^2(\Omega;H^1(0,T))}},\int_0^T
\mathrm{Tr}(g^*g)(t)\d t\right).
\end{align}
\end{theorem}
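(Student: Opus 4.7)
The plan is to extract Theorem \ref{energy3} directly from the weighted estimates already proved in Theorem \ref{energy1}. Fix $\delta=1$. Since $e^{-\delta T}\leq e^{-\delta t}\leq 1$ for $t\in[0,T]$, we have the pointwise inequalities
\[
|\v^n(t)|^2 \leq e^{\delta T}\,|\v^n(t)|^2 e^{-\delta t}, \qquad \|\v^n(s)\|^2 \leq e^{\delta T}\,\|\v^n(s)\|^2 e^{-\delta s},
\]
valid for every $s,t\in[0,T]$. Integrating the second in $s\in[0,t]$, taking expectations, and then applying the estimate (\ref{35}) of Theorem \ref{energy1} produces
\[
\mathbb{E}|\v^n(\cdot,t)|^2 + \nu\,\mathbb{E}\!\int_0^t \|\v^n(\cdot,s)\|^2\,\d s \;\leq\; e^{T}\,C_1\bigl(T,\nu,\|\mathscr{F}\|_{_{\mathrm{L}^2(\Omega;H^1(0,T))}},\,1,\,\textstyle\int_0^T \Tr(g^*g)(s)\,e^{-s}\,\d s\bigr).
\]

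To bring the right-hand side into the form required in (\ref{8}), I would note that $\int_0^T \Tr(g^*g)(s)\,e^{-s}\,\d s \leq \int_0^T \Tr(g^*g)(s)\,\d s$ and that $C_1$ is monotone in its last argument (it enters only as the magnitude of the stochastic forcing). This gives (\ref{8}) with $C_3 := e^{T}C_1$. The supremum estimate (\ref{9}) is obtained by an identical argument applied to (\ref{36}): taking the supremum of the pointwise inequality $|\v^n(t)|^2 \leq e^{\delta T}|\v^n(t)|^2 e^{-\delta t}$ yields $\sup_{0\leq t\leq T}|\v^n(t)|^2 \leq e^{\delta T}\sup_{0\leq t\leq T}\bigl(|\v^n(t)|^2 e^{-\delta t}\bigr)$, and combined with the corresponding time-integral bound, multiplying (\ref{36}) through by $e^{T}$ produces (\ref{9}) with $C_4 := e^{T} C_2$.

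I do not expect any substantive obstacle. All genuinely delicate steps, namely the application of It\^o's formula to $e^{-\delta t}|\v^n|^2$, the cancellation $b(\w,\v^n,\v^n)=0$, the conditional-expectation manipulation exploiting independence of $\w$ from the $\sigma$-algebra $\mathcal{G}$ generated by $\v^n$, the Burkholder-Davis-Gundy control of the stochastic integral, and the Gronwall closure, have already been carried out in the proof of Theorem \ref{energy1}. A fully equivalent alternative would be to redo the It\^o argument directly on $|\v^n|^2$ and apply Young's inequality in the form $2|(P_{\mathbf{H}}\mathbf{f}_{\w},\v^n)| \leq |\v^n|^2 + |\mathbf{f}_{\w}|^2$, absorbing the extra $|\v^n|^2$ into the Gronwall step; however, reduction to the already-established weighted estimates is the cleaner and shorter route.
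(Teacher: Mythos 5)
Your proposal is correct, but it takes a genuinely different route from the paper. The paper's proof of Theorem \ref{energy3} re-runs the It\^{o} computation from scratch on the unweighted functional $|x|^2$, the one substantive change being the estimate on the forcing term: instead of $2|(P_{\mathbf{H}}\mathbf{f}_{\w},\v^n)|\leq\delta|\v^n|^2+\frac{1}{\delta}|\mathbf{f}_{\w}|^2$ (which in Theorem \ref{energy1} is absorbed by the $-\delta e^{-\delta t}|\v^n|^2$ term produced by the weight), it uses the duality bound $2|(P_{\mathbf{H}}\mathbf{f}_{\w},\v^n)|\leq\|\v^n\|^2+\|\mathbf{f}_{\w}\|_{\mathbf{V}'}^2$ and absorbs the $\|\v^n\|^2$ into the viscous dissipation $2\nu\|\v^n\|^2$. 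You instead treat Theorem \ref{energy3} as a corollary of Theorem \ref{energy1}: fix $\delta=1$, use $e^{-\delta t}\geq e^{-\delta T}$ on $[0,T]$, and multiply (\ref{35}) and (\ref{36}) through by $e^{T}$, noting monotonicity of the constants in the trace term. This is logically valid given the statements (\ref{35})--(\ref{36}) as proved, and it is shorter. What the paper's route buys is that it is self-contained and that its $\mathbf{V}'$-duality estimate is the one actually adapted to the established regularity $\mathbf{f}_{\w}\in\mathrm{L}^2(\Omega;\mathrm{L}^2(0,T;\mathbf{V}'))$; your reduction silently inherits from Theorem \ref{energy1} the use of the $\mathbf{H}$-norm $|\mathbf{f}_{\w}|$, which presupposes more regularity of $\mathbf{f}_{\w}$ than the construction guarantees. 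Your closing remark about the alternative of redoing It\^{o} directly and using Young's inequality is essentially what the paper does, except that you pair the forcing with $|\v^n|^2$ and Gronwall where the paper pairs it with $\|\v^n\|^2$ and the dissipation.
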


\begin{proof}
Apply It\^{o}'s lemma (see Theorem 1, page 155 Gy\"{o}ngy and Krylov
\cite{GyKr}, Metivier \cite{Me})  to the function $|x|^2$ and to the
process $\v^n(t)$. Proceeding similarly as in Theorem \ref{energy1}
and using the inequality
$$2|(P_{\mathbf{H}}\mathbf{f}_{\w},\v^n)|\leq
\|\v^n\|^2+\|\mathbf{f}_{\w}\|_{\V'}^2,$$ we get the required
result.
\end{proof}

\section{Existence and Uniqueness of Strong Solutions for the Perturbed Vector Field}\setcounter{equation}{0}
Monotonicity arguments were first used by Krylov and Rozovskii
\cite{KR} to prove the existence and uniqueness of the strong
solutions for a wide class of stochastic evolution equations (under
certain assumptions on the drift and diffusion coefficients), which
in fact is the refinement of the previous results by Pardoux \cite{
Pa1, Pa2} (also see  Metivier \cite{Me}) and also the generalization
of the results by Bensoussan and Temam \cite{Be}. Menaldi and
Sritharan \cite{MS} further developed this theory for the case when
the sum of the linear and nonlinear operators are locally monotone.

In this section, we will prove the local monotonicity of the sum of
the Stokes operator and the inertia term of the perturbed vector
field, and following Menaldi and Sritharan \cite{MS} we use a
generalization of Minty-Browder technique to prove the existence and
uniqueness result that avoids compactness method and hence
applicable for unbounded domains directly.

We will start this section with a lemma known as the
Gagliardo-Nirenberg inequality (section 1.2, Theorem 2.1 in
DiBenedetto \cite{ED}).

\begin{lemma}\label{GN}
Let $\varphi\in W^{1,p}(\mathbb{R}^n),$ where $n$ is the dimension
of the space. For every fixed number $p,s\geq 1,$ there exists a
constant $C$ depending only upon $n,p$ and $s$ such that
$$\|\varphi\|_{\mathrm{L}^q(\mathbb{R}^n)}\leq C\|\nabla
\varphi\|^{\alpha}_{\mathrm{L}^p(\mathbb{R}^n)}\|\varphi\|_{\mathrm{L}^s(\mathbb{R}^n)}^{1-\alpha},$$
where $\alpha\in[0,1],$ $p,q\geq 1$ and $s$ satisfies the following
relation:
$$\alpha=\left(\frac{1}{s}-\frac{1}{q}\right)\left(\frac{1}{n}-\frac{1}{p}+\frac{1}{s}\right)^{-1}.$$
\end{lemma}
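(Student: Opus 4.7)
The plan is to reduce the Gagliardo--Nirenberg inequality to two classical ingredients: an endpoint Sobolev inequality and a H\"older interpolation. First, I would use a scaling argument to confirm (and re-derive) the exponent relation. Substituting $\varphi_\lambda(x) = \varphi(\lambda x)$ into the desired inequality and comparing the $\lambda$-powers on both sides forces
$$-\frac{n}{q} = \alpha\left(1-\frac{n}{p}\right) + (1-\alpha)\left(-\frac{n}{s}\right),$$
which, solved for $\alpha$, yields exactly the stated relation. This homogeneity calculation also identifies the admissible range of $q$ (between $s$ and the Sobolev exponent $p^\ast = np/(n-p)$) in which an $\alpha \in [0,1]$ exists.

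Next I would establish the endpoint Sobolev inequality $\|\varphi\|_{L^{p^\ast}(\mathbb{R}^n)} \leq C(n,p)\,\|\nabla\varphi\|_{L^p(\mathbb{R}^n)}$ for $1 \leq p < n$. Start with $p = 1$ on test functions $\varphi \in C_c^\infty(\mathbb{R}^n)$: for each coordinate direction $i$ write $|\varphi(x)| \leq \int_{-\infty}^{x_i}|\partial_i\varphi|\,dt$, multiply the $n$ resulting inequalities, raise the product to the power $1/(n-1)$, and integrate iteratively using the generalized H\"older inequality. This delivers
$$\|\varphi\|_{L^{n/(n-1)}(\mathbb{R}^n)} \leq \prod_{i=1}^n \|\partial_i\varphi\|_{L^1(\mathbb{R}^n)}^{1/n} \leq \|\nabla\varphi\|_{L^1(\mathbb{R}^n)}.$$
For $1 < p < n$, apply this $p = 1$ case to $|\varphi|^\gamma$ with $\gamma = p(n-1)/(n-p)$ and use H\"older's inequality to absorb the factor $\||\varphi|^{\gamma-1}\|_{L^{p'}}$; density of $C_c^\infty$ in $W^{1,p}(\mathbb{R}^n)$ then extends the bound to the full Sobolev space.

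Finally I would interpolate: for $s \leq q \leq p^\ast$, choose $\theta \in [0,1]$ so that $1/q = \theta/p^\ast + (1-\theta)/s$ and apply H\"older's inequality to obtain $\|\varphi\|_{L^q} \leq \|\varphi\|_{L^{p^\ast}}^\theta \|\varphi\|_{L^s}^{1-\theta}$. Combining with the endpoint Sobolev bound gives the claimed inequality with $\alpha = \theta$; substituting $1/p^\ast = 1/p - 1/n$ into the defining relation for $\theta$ recovers exactly $\alpha = (1/s - 1/q)(1/n - 1/p + 1/s)^{-1}$. The main technical obstacle is the endpoint Sobolev inequality at $p = 1$, where the multi-dimensional H\"older chain must be carried out carefully; the supercritical cases $p \geq n$ require separate Morrey-type arguments but are not needed for the two-dimensional application in this paper.
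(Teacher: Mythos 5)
The paper offers no proof of this lemma at all --- it is quoted verbatim from DiBenedetto (Section 1.2, Theorem 2.1), and the only case actually used is recorded separately as Lemma \ref{LL} with a pointer to Ladyzhenskaya. So your proposal is being measured against a citation, not a proof; on its own terms it is the standard textbook argument (scaling to identify $\alpha$, the $p=1$ Loomis--Whitney/iterated H\"older estimate, bootstrapping to $1<p<n$ via $|\varphi|^{\gamma}$ with $\gamma=p(n-1)/(n-p)$, then $\mathrm{L}^q$ interpolation between $\mathrm{L}^{s}$ and $\mathrm{L}^{p^{\ast}}$), and each of those steps is carried out correctly for $1\le p<n$.

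The genuine gap is in your closing sentence. You defer the cases $p\ge n$ as ``not needed for the two-dimensional application in this paper,'' but the application is \emph{exactly} the borderline case $p=n$: Lemma \ref{LL} is the Ladyzhenskaya inequality $\|\varphi\|^4_{\mathrm{L}^4(\Theta)}\le 2\|\varphi\|^2_{\mathrm{L}^2(\Theta)}\|\nabla\varphi\|^2_{\mathrm{L}^2(\Theta)}$, i.e.\ $n=p=s=2$, $q=4$, $\alpha=1/2$. There $p^{\ast}=np/(n-p)$ is undefined (formally infinite), the endpoint embedding $W^{1,2}(\mathbb{R}^2)\hookrightarrow \mathrm{L}^{\infty}$ is false, and your chain ``endpoint Sobolev, then interpolate between $\mathrm{L}^{s}$ and $\mathrm{L}^{p^{\ast}}$'' collapses; Morrey-type arguments do not rescue it either, since those address $p>n$. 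The one case the paper needs is therefore the one case your argument does not reach. The repair is short and stays within your own toolkit: apply the $p=1$ estimate $\|\psi\|_{\mathrm{L}^{n/(n-1)}}\le\prod_i\|\partial_i\psi\|^{1/n}_{\mathrm{L}^1}$ to $\psi=|\varphi|^{\gamma}$ and, instead of absorbing the factor $\||\varphi|^{\gamma-1}\|_{\mathrm{L}^{p'}}$ into the left-hand side, keep it as the lower-order norm. With $n=2$, $\gamma=2$ this reads $\|\varphi\|^2_{\mathrm{L}^4}\le 2\|\varphi\|_{\mathrm{L}^2}\|\nabla\varphi\|_{\mathrm{L}^2}$, which is precisely Lemma \ref{LL}; more generally, iterating over $\gamma$ and interpolating in $s$ recovers the full $p=n$ range with $\alpha<1$. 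You should either add this case or restrict the claimed scope of your proof accordingly.
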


The following lemma is a special case of Lemma \ref{GN} in two
dimension which will be useful in our context.

\begin{lemma}\label{LL}
For $\varphi\in C_0^{\infty}(\Theta)$, where $\Theta$ is the
admissible channel domain, we have the following estimate:
$$\|\varphi\|^4_{\mathrm{L}^4(\Theta)}\leq
2\|\varphi\|^2_{\mathrm{L}^2(\Theta)}\|\nabla
\varphi\|^2_{\mathrm{L}^2(\Theta)}.$$
\end{lemma}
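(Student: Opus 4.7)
\medskip

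\noindent\textbf{Proof proposal.} My plan is to establish this well-known Ladyzhenskaya-type inequality directly from the one-dimensional fundamental theorem of calculus, rather than invoking the general Gagliardo-Nirenberg estimate of Lemma \ref{GN}. Since $\varphi \in C_0^{\infty}(\Theta)$, I extend $\varphi$ by zero to all of $\R^2$; the extension remains smooth and compactly supported, and all relevant norms are preserved. Writing $\x = (x_1, x_2)$ and treating $\varphi$ as an $\R^2$-valued field, the key identity is
\begin{equation*}
|\varphi(x_1,x_2)|^2 = \int_{-\infty}^{x_1} \partial_{\xi_1} |\varphi(\xi_1, x_2)|^2 \, \d \xi_1 = 2\int_{-\infty}^{x_1} \varphi \cdot \partial_{\xi_1}\varphi \, \d \xi_1,
\end{equation*}
which, together with the analogous identity in the $x_2$-direction and the Cauchy--Schwarz inequality applied pointwise, yields
\begin{equation*}
|\varphi(x_1,x_2)|^2 \leq 2 \int_{\R} |\varphi||\partial_{x_1}\varphi| \, \d \xi_1, \qquad |\varphi(x_1,x_2)|^2 \leq 2 \int_{\R} |\varphi||\partial_{x_2}\varphi| \, \d \xi_2.
\end{equation*}

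Multiplying these two bounds gives a pointwise control of $|\varphi|^4(x_1,x_2)$ by a product of two factors, one depending only on $x_2$ and the other only on $x_1$. I would then integrate over $\R^2$ and invoke Tonelli's theorem to separate the double integral into a product of two planar integrals. Applying Cauchy--Schwarz in $\mathrm{L}^2(\R^2)$ to each of these factors produces the bound
\begin{equation*}
\int_{\R^2} |\varphi|^4 \, \d \x \leq 4 \,\|\varphi\|_{\mathrm{L}^2}^{2}\,\|\partial_{x_1}\varphi\|_{\mathrm{L}^2}\,\|\partial_{x_2}\varphi\|_{\mathrm{L}^2}.
\end{equation*}
A final application of the elementary inequality $2ab \leq a^2 + b^2$ to $\|\partial_{x_1}\varphi\|_{\mathrm{L}^2}\|\partial_{x_2}\varphi\|_{\mathrm{L}^2}$ turns the right-hand side into $2\|\varphi\|_{\mathrm{L}^2}^{2}\|\nabla \varphi\|_{\mathrm{L}^2}^{2}$, and since the extension was by zero, the left-hand side equals $\|\varphi\|_{\mathrm{L}^4(\Theta)}^{4}$.

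There is no serious obstacle: the argument is essentially a tensorization of the one-dimensional Sobolev embedding $H^1 \hookrightarrow \mathrm{L}^{\infty}$ on each line. The only mild care is in handling that $\varphi$ is vector-valued, which is absorbed by the identity $\partial_i |\varphi|^2 = 2 \varphi \cdot \partial_i \varphi$ together with $|\varphi \cdot \partial_i \varphi| \leq |\varphi||\partial_i \varphi|$, and in verifying that the constant $2$ (rather than $4$) is obtained precisely from the AM--GM step at the end.
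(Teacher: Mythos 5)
Your proof is correct, and it is essentially the paper's approach: the paper does not write out an argument but simply refers to Ladyzhenskaya (Chapter 1, Lemma 1), whose proof is exactly the tensorized fundamental-theorem-of-calculus argument you give, including the final AM--GM step that converts the constant $4\|\partial_{x_1}\varphi\|_{\mathrm{L}^2}\|\partial_{x_2}\varphi\|_{\mathrm{L}^2}$ into $2\|\nabla\varphi\|_{\mathrm{L}^2}^2$. Your handling of the vector-valued case and of the zero extension off $\Theta$ is also fine, so the proposal supplies in full the classical proof that the paper merely cites.
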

The general proof in $\mathbb{R}^2$ of this lemma can be obtained
from Ladyzhenskaya \cite{La} (Chapter 1, Lemma 1).

\begin{remark}\label{LL1}
The above lemma suggests that $\V\cap \H\subset
\mathrm{L}^4(\Theta)$ and
\begin{align}\label{L1}
\mathrm{L}^2(0,T;\V)\cap \mathrm{L}^{\infty}(0,T;\H)\subset
\mathrm{L}^4((0,T)\times\Theta).\end{align}
\end{remark}

\begin{definition}
Let $X$ be a Banach space and let $X^{'}$ be its topological dual.
An operator $F:D\rightarrow X^{'},D\subset X$ is said to be monotone
if $$\left(F(x)-F(y),x-y\right)_{X^{'}\times X}\leq 0\textrm{ for
all } x,y\in D.$$
\end{definition}
$F$ is said to be $\Lambda$-monotone if $F+\Lambda I$ is monotone,
where $\Lambda\in\mathbb{R}$ and $I$ is the identity operator.

Next we prove the local monotonicity property.

\begin{lemma}[Local Monotonicity of $F$]\label{mon}
 Let us denote by $\mathbb{B}_{\varrho}$
the closed $\mathrm{L}^4$-ball in $\mathbf{V}$:
$$\mathbb{B}_{\varrho}=\left\{\x\in \mathbf{V}:\|\x\|_{\mathrm{L}^4}\leq
\varrho\right\}.$$  Define the nonlinear operator $F$ on
$\mathbf{V}$ by $F(\v)=-\nu A\v-B(\v,\v)-B_2(\w)\v-B_1(\w)\v$. Then
the operator $F+\Lambda I$, for $\Lambda\in\mathbb{R}$ and
sufficiently large $|\Lambda|$, is monotone in
$\mathbb{B}_{\varrho}$, i.e., for any $\v\in \mathbf{V}$ and $\x\in
\mathbb{B}_{\varrho},$
\begin{align}\label{31}
\left(F(\v)-F(\x),\z\right)+\Lambda|\z|^2\leq 0,
\end{align}
where $\z=\v-\x$ and
$\Lambda=\frac{\nu}{2C}-\frac{\beta^2_{20}(\omega)}{\nu}-|\eta_{21}(\omega)|-|\eta_{22}(\omega)|-\frac{27}{4}\frac{{\varrho}^4}{\nu^3}.$
$\beta_{20}$ is defined in (\ref{eqn1001a}) and $\eta_{2i}$'s are
defined in Remark \ref{rem11}.
\end{lemma}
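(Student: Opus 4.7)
The plan is to expand $(F(\v)-F(\x),\z)$ into four pieces by linearity and show that after using the structural identities for $b$, Lemma~\ref{bwv}, the Stokes coercivity, Ladyzhenskaya's $L^4$ inequality (Lemma~\ref{LL}), and the $b(\cdot,\w,\cdot)$ estimate already recorded in the proof of Theorem~\ref{energy1}, everything is absorbed into a $-\frac{\nu}{2}\|\z\|^2$ plus a $|\z|^2$ term with exactly the constant prescribed by $\Lambda$.

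First I would write $\v=\x+\z$ and expand
\begin{align*}
(F(\v)-F(\x),\z)=-\nu\|\z\|^2-b(\z,\x,\z)-b(\z,\w,\z)-b(\w,\z,\z),
\end{align*}
using $(A\z,\z)=\|\z\|^2$ and the trilinear expansion of $B(\v,\v)-B(\x,\x)$ together with $b(\z,\z,\z)=0$ and $b(\x,\z,\z)=0$. By Lemma~\ref{bwv}(ii) the last term $b(\w,\z,\z)$ vanishes. For the remaining cubic term, using the divergence-free antisymmetry $b(\z,\x,\z)=-b(\z,\z,\x)$, Hölder, and Ladyzhenskaya's inequality from Lemma~\ref{LL},
\begin{align*}
|b(\z,\z,\x)|\le \|\z\|_{L^4}\|\nabla\z\|_{L^2}\|\x\|_{L^4}\le 2^{1/4}\varrho\,|\z|^{1/2}\|\z\|^{3/2},
\end{align*}
since $\x\in\mathbb{B}_\varrho$. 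Young's inequality with exponents $(4,4/3)$, tuned so that the $\|\z\|^2$-coefficient equals $\frac{\nu}{4}$, yields $|b(\z,\z,\x)|\le \frac{\nu}{4}\|\z\|^2+\frac{27}{4}\frac{\varrho^4}{\nu^3}|\z|^2$.

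Next I would estimate $b(\z,\w,\z)$ by the inequality already used at the start of Theorem~\ref{energy1},
\begin{align*}
|b(\z,\w,\z)|\le \beta_{20}(\omega)|\z|\|\z\|+(\beta_{21}(t,\omega)+\beta_{22}(t,\omega))|\z|^2,
\end{align*}
and split $\beta_{20}|\z|\|\z\|\le\frac{\nu}{4}\|\z\|^2+\frac{\beta_{20}^2(\omega)}{\nu}|\z|^2$. The time-dependent $\beta_{2i}$'s are dominated uniformly in $t$ by $\eta_{2i}(\omega)$ via Remark~\ref{rem11}. Combining both bounds,
\begin{align*}
(F(\v)-F(\x),\z)\le -\frac{\nu}{2}\|\z\|^2+\Bigl(\tfrac{\beta_{20}^2(\omega)}{\nu}+|\eta_{21}(\omega)|+|\eta_{22}(\omega)|+\tfrac{27}{4}\tfrac{\varrho^4}{\nu^3}\Bigr)|\z|^2.
\end{align*}

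Finally, applying the Poincaré inequality $|\z|^2\le C\|\z\|^2$ valid on $\mathbf{V}$ for admissible channel domains, the surviving $-\frac{\nu}{2}\|\z\|^2$ dominates $-\frac{\nu}{2C}|\z|^2$, and adding $\Lambda|\z|^2$ with $\Lambda=\frac{\nu}{2C}-\frac{\beta_{20}^2(\omega)}{\nu}-|\eta_{21}(\omega)|-|\eta_{22}(\omega)|-\frac{27}{4}\frac{\varrho^4}{\nu^3}$ produces the inequality \eqref{31}. The main obstacle — and the only place where the ball constraint $\|\x\|_{L^4}\le\varrho$ is used — is the cubic term $b(\z,\z,\x)$: it is exactly here that one must trade the $\varrho$-dependent factor against dissipation through Young's inequality, which is what forces the quartic-in-$\varrho$ constant and, consequently, the local (rather than global) nature of the monotonicity.
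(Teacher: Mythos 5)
Your proof is correct and follows essentially the same route as the paper: the same decomposition $(F(\v)-F(\x),\z)=-\nu\|\z\|^2+b(\z,\z,\x)-b(\z,\w,\z)$ (with $b(\w,\z,\z)=0$), the same Ladyzhenskaya--Young treatment of the cubic term using $\|\x\|_{\mathrm{L}^4}\le\varrho$, the same $\beta_{20},\eta_{2i}$ bound on $b(\z,\w,\z)$, and the same Poincar\'e step at the end. The only discrepancy is cosmetic: having retained the factor $2^{1/4}$ from Lemma \ref{LL}, your Young step actually produces $\tfrac{27}{2}\tfrac{\varrho^4}{\nu^3}$ rather than $\tfrac{27}{4}\tfrac{\varrho^4}{\nu^3}$, but the paper itself drops that factor and the precise constant in $\Lambda$ is immaterial to the conclusion.
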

\begin{proof}
We have $F(\v)=-\nu A\v-B(\v,\v)-B_2(\w)\v-B_1(\w)\v$ and
$F(\x)=-\nu A\x-B(\x,\x)-B_2(\w)\x-B_1(\w)\x$.

By a simple application of the property of the trilinear form, we
have
\begin{align*}
\left(B_2(\w)\v-B_2(\w)\x,\z\right)&=\left(B_2(\w)\v,\z\right)-\left(B_2(\w)\x,\z\right)\\
&=b(\v,\w,\z)-b(\x,\w,\z)\\
&=b(\v-\x,\w,\v-\x)=b(\z,\w,\z)\\
&=\left(B_2(\w)\z,\z\right).
\end{align*}
Similarly,
$$\left(B_1(\w)\v-B_1(\w)\x,\z\right)=\left(B_1(\w)\z,\z\right).$$
Now one can get
\begin{align}\label{32}
\left(F(\v)-F(\x),\z\right)=\left(\nu\Delta
\z,\z\right)-\left(B(\v,\v)-B(\x,\x),\z\right)-\left(B_2(\w)\z,\z\right)-\left(B_1(\w)\z,\z\right).\nonumber\\
\end{align}
Also we have $\left(\nu\Delta\z,\z\right)=-\nu\|\z\|^2$ and
\begin{align*}
\left(B(\v,\z),\x\right)&=b(\v,\z,\x)=-b(\v,\x,\z)=-b(\v,\x,\z)-b(\v,\z,\z)\\&=-b(\v,\x+\z,\z)=-b(\v,\v,\z)\\&=-\left(B(\v),\z\right).
\end{align*}
Similarly, one can prove that
$$\left(B(\v),\z\right)=-\left(B(\v,\z),\x\right)\textrm{ and
}\left(B(\x),\z\right)=-\left(B(\x,\z),\v\right).$$ This gives
\begin{align*}
\left(B(\v)-B(\x),\z\right)&=-\left(B(\v,\z),\x\right)+\left(B(\x,\z),\v\right)
=b(\x,\z,\v)-b(\v,\z,\x)\\
&=b(\x,\z,\v)-b(\x,\z,\x)+b(\x,\z,\x)-b(\v,\z,\x)\\
&=b(\x,\z,\v-\x)-b(\v-\x,\z,\x)=b(\x,\z,\z)-b(\z,\z,\x)\\&=-b(\z,\z,\x)=-\left(B(\z),\x\right).
\end{align*}
Using H\"{o}lder's inequality and Sobolev embedding theorem, one
gets
\begin{align*}
\left|\left(B(\v)-B(\x),\z\right)\right|&=\left|-\left(B(\z),\x\right)\right|=\left|b(\z,\z,\x)\right|\\
&\leq\|\z\|_{\mathrm{L}^4(\Theta)}\|\z\|_{\mathbf{V}}\|\x\|_{\mathrm{L}^4(\Theta)}\\
&\leq\|\z\|^{3/2}|\z|^{1/2}\|\x\|_{\mathrm{L}^4(\Theta)}\\
&\leq\frac{\nu}{4}\|\z\|^2+\frac{27}{4\nu^3}|\z|^2\|\x\|^4_{\mathrm{L}^4(\Theta)}.
\end{align*}
Also, we have $\left(B_1(\w)\z,\z\right)=b(\w,\z,\z)=0$. By using
the inequality $$\left|b(\u,\w,\v)\right|\leq
\beta_{20}|\u|^{1/2}\|\u\|^{1/2}|\v|^{1/2}\|\v\|^{1/2} +\sum_{k=1}^2
\beta_{2k}(t)|\u||\v|,$$  Young's inequality and Remark \ref{rem11},
we get
\begin{align*}
\left|\left(B_2(\w)\z,\z\right)\right|&=\left|b(\z,\w,\z)\right|\leq\beta_{20}(\omega)|\z|\|\z\|+\sum_{k=1}^2\beta_{2k}(t,\omega)|\z|^2\\
 &\leq\frac{\nu}{4}\|\z\|^2+\left(\frac{\beta^2_{20}(\omega)}{\nu}+|\eta_{21}(\omega)|+|\eta_{22}(\omega)|\right)|\z|^2.
\end{align*}
Applying all these estimates in (\ref{32}), one can deduce that
\begin{align*}
\left(F(\v)-F(\x),\z\right)\leq -\frac{\nu}{2}\|\z\|^2+\frac{27
{\varrho}^4}{4\nu^3}|\z|^2+\left(\frac{\beta^2_{20}(\omega)}{\nu}+|\eta_{21}(\omega)|+|\eta_{22}(\omega)|\right)|\z|^2.
\end{align*}
Since $\Theta$ is an admissible channel domain and Poincar\'{e}
inequality still holds, we get $|\z|\leq C\|\z\|$ and hence we have
\begin{align}\label{34}
\left(F(\v)-F(\x),\z\right)+\left(\frac{\nu}{2C}-\frac{\beta^2_{20}(\omega)}{\nu}-|\eta_{21}(\omega)|-|\eta_{22}(\omega)|-\frac{27
{\varrho}^4}{4\nu^3}\right)|\z|^2\leq 0.
\end{align}
\end{proof}

\begin{definition}[Strong Solution]
A strong solution $\mathbf{v}$ is defined on a given probability
space $(\Omega, \mathcal{F},\mathcal{F}_t,\mathbb{P})$ as a
$\mathrm{L}^2(\Omega;\mathrm{L}^2(0,T,\mathbf{V})\cap
C(0,T;\mathbf{H}))$ valued adapted process which satisfies the
stochastic channel flow model
\begin{align}\label{60}
\d\v+\left[B(\v,\v)+B_2(\w)\v+B_1(\w)\v\right]\d t&=\left[-\nu
A\v+P_{\mathbf{H}}\mathbf{f}_{\w}\right]\d t+g(t)\d W(t)\nonumber\\
\v(0)&=0,
\end{align}
in the weak sense and also the energy inequalities
\begin{align}\label{9a} &\mathbb{E}\left(\sup_{0\leq t\leq
T}\left|\v(\cdot,t)\right|^2+2\nu \int_0^T\|\v(\cdot,t)\|^2\d
s\right)\nonumber\\&\qquad\qquad\leq
C\left(T,\nu,\|\mathscr{F}\|_{_{\mathrm{L}^2(\Omega;H^1(0,T))}},\int_0^T
\mathrm{Tr}(g^*g)(t)\d t\right).
\end{align}
and
\begin{align}\label{36a}
&\mathbb{E}\left[\sup_{0\leq t\leq
T}\left|\v(\cdot,t)\right|^2e^{-\delta t}\right]+2\nu
\int_0^T\mathbb{E}\|\v(\cdot,t)\|^2e^{-\delta t}\d s
\nonumber\\&\quad\quad\leq
C\left(T,\nu,\|\mathscr{F}\|_{_{\mathrm{L}^2(\Omega;H^1(0,T))}},\delta,\int_0^T\Tr(g^*g)(t)e^{-\delta
t}\d t\right).
\end{align}
\end{definition}
\begin{definition}
Let $X$ be a Banach space and let $X'$ be its topological dual. An
operator $F:D\to X', D\subset X$ is said to be hemicontinuous at
$x\in D$, if $y\in X$, $t_n>0, n=1,2,\cdots$, $t_n\to 0$ and
$x+t_ny\in D$ imply $F(x+t_ny)\to F(x)$ weakly.
\end{definition}
\begin{theorem}\label{mainth}
Let $\mathbf{f}_{\w}\in \mathrm{L}^2(0,T;\mathbf{V}').$ Then there
exists a unique adapted process $\v(\x,t,\omega)$ with the
regularity
$$\v\in \mathrm{L}^2\left(\Omega;C(0,T;\mathbf{H})\cap
\mathrm{L}^2(0,T;\mathbf{V})\right)$$ satisfying the stochastic
model (\ref{60}) and the a-priori bounds in (\ref{9a}) and
(\ref{36a}).
\end{theorem}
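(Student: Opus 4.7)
My plan is to construct the strong solution as the weak limit of the Galerkin approximations $\v^n$ from (\ref{main}) and identify the nonlinear limit by the local monotonicity version of the Minty--Browder method developed in Menaldi and Sritharan \cite{MS}. Because $\V\hookrightarrow\H$ is not compact on the unbounded domain $\Theta$, no strong convergence of $\v^n$ in $\H$ is available, so local monotonicity (Lemma \ref{mon}) must replace compactness. The a-priori bounds of Theorems \ref{energy1} and \ref{energy3} give $\{\v^n\}$ uniformly bounded in $\mathrm{L}^2(\Omega;\mathrm{L}^\infty(0,T;\H))\cap\mathrm{L}^2(\Omega;\mathrm{L}^2(0,T;\V))$; combined with Lemmas \ref{buu}--\ref{bvw} and the Gagliardo--Nirenberg embedding of Lemma \ref{LL}, the sequence $\{F(\v^n)\}$ is uniformly bounded in $\mathrm{L}^2(\Omega;\mathrm{L}^2(0,T;\V'))$. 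Extracting a subsequence, I obtain a limit $\v$ (weak-$*$ in $\mathrm{L}^2(\Omega;\mathrm{L}^\infty(0,T;\H))$ and weak in $\mathrm{L}^2(\Omega;\mathrm{L}^2(0,T;\V))$) together with a weak limit $F_0$ of $F(\v^n)$ in $\mathrm{L}^2(\Omega;\mathrm{L}^2(0,T;\V'))$, and passing to the limit in the Galerkin equation yields, in the variational sense,
\begin{equation*}
\d\v = (F_0+P_{\H}\mathbf{f}_\w)\,\d t + g(t)\,\d W(t),\qquad \v(0)=0.
\end{equation*}
All remaining work is concentrated in the identification $F_0=F(\v)$.

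For that step I fix an adapted test process $\x$ in $\mathrm{L}^2(\Omega;\mathrm{L}^2(0,T;\V))\cap\mathrm{L}^4(\Omega\times(0,T)\times\Theta)$ and introduce the random integrating factor
\begin{equation*}
r(t,\omega)=\int_0^t\Bigl[\tfrac{\nu}{C}+\tfrac{2\beta_{20}^2(\omega)}{\nu}+2|\eta_{21}(\omega)|+2|\eta_{22}(\omega)|+\tfrac{27}{2\nu^3}\|\x(s,\omega)\|_{\mathrm{L}^4(\Theta)}^4\Bigr]\,\d s,
\end{equation*}
tuned so that $-r'/2$ dominates the constant $\Lambda$ of Lemma \ref{mon} pointwise in $t$ (with $\varrho^4$ replaced by $\|\x(t)\|_{\mathrm{L}^4}^4$). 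Applying It\^o's formula to $e^{-r(t)}|\v^n(t)-\x(t)|^2$, taking expectation, splitting $F(\v^n)=F(\x)+(F(\v^n)-F(\x))$ in the resulting drift, and using the local monotonicity bound (\ref{34}) to sign the quadratic piece, I take $\liminf$ in $n$; the weak convergences above together with weak lower semicontinuity of the norm produce
\begin{equation*}
\mathbb{E}\int_0^T e^{-r(t)}\bigl(F_0(t)-F(\x(t)),\,\v(t)-\x(t)\bigr)\,\d t\le 0.
\end{equation*}
Choosing $\x=\v-\lambda y$ with $\lambda>0$ small and $y$ an arbitrary bounded adapted process in $\V$, dividing by $\lambda$, and letting $\lambda\downarrow 0$, the continuity estimates of Lemmas \ref{buu}--\ref{bvw} yield hemicontinuity of $F$ on $\V$ and hence $F_0=F(\v)$ by Minty's lemma.

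Pathwise continuity $\v\in C([0,T];\H)$ a.s.\ and the energy estimates (\ref{9a}), (\ref{36a}) then follow from It\^o's formula applied to $|\v|^2$ and $e^{-\delta t}|\v|^2$ in the limit equation together with weak lower semicontinuity in the analogous Galerkin bounds (\ref{9}) and (\ref{36}). Uniqueness is a second application of the same monotonicity machinery: for two solutions $\v_1,\v_2$ with the same data, It\^o's formula applied to $e^{-r(t)}|\v_1(t)-\v_2(t)|^2$ with $\x=\v_2$ in the definition of $r$ (which is a.s.\ finite thanks to Remark \ref{LL1}) combined with Lemma \ref{mon} forces $\mathbb{E}[e^{-r(t)}|\v_1(t)-\v_2(t)|^2]\le 0$ for all $t$ and hence $\v_1\equiv \v_2$. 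The principal obstacle throughout is the Minty--Browder step: the absence of compactness on $\Theta$ forces the test processes to remain in an $\mathrm{L}^4$-ball, which is precisely why Lemma \ref{LL} and the embedding (\ref{L1}) are indispensable for both existence and uniqueness.
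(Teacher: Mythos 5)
Your proposal is correct and follows essentially the same route as the paper: Galerkin approximation, the a-priori bounds of Theorems \ref{energy1} and \ref{energy3} plus Banach--Alaoglu to extract weak-$*$/weak limits $\v$ and $F_0$, the local Minty--Browder argument with the exponential weight $e^{-r(t)}$ calibrated against the constant $\Lambda$ of Lemma \ref{mon}, hemicontinuity to conclude $F_0=F(\v)$, and monotonicity again for pathwise uniqueness. The only cosmetic difference is that you apply It\^o's formula directly to $e^{-r(t)}|\v^n(t)-\x(t)|^2$, whereas the paper applies it to $e^{-r(t)}|\v^n(t)|^2$, invokes weak lower semicontinuity of $\mathbb{E}[e^{-r(T)}|\v^n(T)|^2]$ separately, and first restricts the test process to the finite-dimensional spaces $\mathbf{H}_m$ before passing to general $\y$ by density --- a technically cleaner bookkeeping of the same computation.
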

\begin{proof}
\textbf{Part I: Existence of Strong Solution.}

We prove the existence of Strong solutions of the stochastic channel
flow model (\ref{60}) in the following four steps.

\noindent\textit{\textbf{Step (1)} Finite-dimensional (Galerkin)
approximation of the stochastic channel flow model (\ref{60}):}

Let $\{e_1, e_2,\cdots,e_n,\cdots\}$ be a complete orthonormal
system in $\H$ belonging to $\V$. Denote by $\H_n$ the
$n$-dimensional subspace of $\H$ generated with $\{e_1,
e_2,\cdots,e_n\}$. Let $\v^n$ be the solution of the following
stochastic differential equation in $\mathbf{H}_n$:
\begin{align}\label{main0}
\d(\v^n(t),v)&=(F(\v^n(t)),v)\d
t+(P_{\mathbf{H}}\mathbf{f}_{\w}(t),v)\d t+\sum_k(g_k(t),v)\d
W_k(t),\nonumber\\ \v^n(0)&=P_n\v(0)=0,
\end{align}
in $(0,T)$ for any $v\in\mathbf{H}_n$, where $F(\v)=-\nu
A\v-B(\v,\v)-B_1(\w)\v-B_2(\w)\v.$ Denoting
$\tilde{F}(\v)=F(\v)+P_{\mathbf{H}}\mathbf{f}_{\w}$, we have
$\{\v^n\}$ satisfies the stochastic It\^{o} differential equation
\begin{align}\label{61a}
\d \v^n(t)=\tilde{F}(\v^n(t))\d t+g(t)\d W(t),\;\; \v^n(0)=0
\end{align}
and the corresponding energy equality
\begin{align}\label{61ab}
\d |\v^n(t)|^2=2(\tilde{F}(\v^n(t)),\v^n(t))\d t+2\left(g(t)\d
W(t),\v^n(t)\right)+\textrm{Tr}(g^*g)(t)\d t.
\end{align}

\noindent\textit{\textbf{Step (2)}  Weak convergent sequences:}

Using the a-priori estimates in Theorem \ref{energy1} and Theorem
\ref{energy3}, it follows from the Banach-Alaoglu theorem that along
a subsequence, the Galerkin approximations $\{\v^n\}$ have the
following limits:
\begin{align}\label{61}
&\v^n\rightarrow\v\textrm{ weak star in }
\mathrm{L}^2(\Omega;\mathrm{L}^\infty (0,T;\mathbf{H})\cap
\mathrm{L}^2(0,T;\mathbf{V})) \nonumber\\& F(\v^n)\rightarrow
F_0\textrm{ weakly in }
\mathrm{L}^2(\Omega;\mathrm{L}^2(0,T;\mathbf{V}')).
\end{align}
Now the assertion in the second statement holds since $F(\v^n)$ is
bounded in the space
$\mathrm{L}^2(\Omega;\mathrm{L}^2(0,T;\mathbf{V}')).$ Note that
$\tilde{F}(\v^n(t))\to\tilde{F}_0(t)=F_0(t)+P_{\mathbf{H}}\mathbf{f}_{\w}(t)$
weakly in $\mathrm{L}^2(\Omega;\mathrm{L}^2(0,T;\mathbf{V}'))$ since
$\mathbf{f}_{\w}\in\mathrm{L}^2(\Omega;\mathrm{L}^2(0,T;\mathbf{V}'))$.

Moreover, $\v^n(t)\in\mathrm{L}^2(\Omega; C([0,T];\H_n))$ and the
Galerkin approximations $\v^n$ converge to $\v$ weakly star in the
Banach space $\mathrm{L}^2(\Omega; C([0,T];\H))$ implies that
$t\mapsto \v(t)$ is a continuous function from $[0,T]$ into $\H$
with probability $1$ (see page 46-47, Proposition 3.3 of Menaldi and
Sritharan \cite{MS}). Hence we have
$\v(t)\in\mathrm{L}^2(\Omega;C(0,T;\mathbf{H})\cap
\mathrm{L}^2(0,T;\mathbf{V}))$.

Now let us apply It\^{o}'s lemma to the function $e^{-r(t)}|x|^2$
and to the process $\v^n(t)$ to obtain
\begin{align}\label{64}
&\d\left[e^{-r(t)}|\v^n(t)|^2\right]=-e^{-r(t)}\left[\left(\dot{r}(t)\v^n(t),\v^n(t)\right)
+2\left(\tilde{F}(\v^n(t)),\v^n(t)\right)\right]\d
t\nonumber\\&\qquad\qquad\qquad\quad+2e^{-r(t)}\left(g(t)\d
W(t),\v^n(t)\right)+e^{-r(t)}\mathrm{Tr}(g^*g)(t)\d t,
\end{align}
where $\dot{r}(t)$ denotes the derivative of $r(t)$. Let us
integrate the above equality (\ref{64}) from 0 to $T$ and taking the
expectation to get
\begin{align*}
&\mathbb{E}\left[e^{-r(T)}|\v^n(T)|^2\right]=\mathbb{E}\left[\int_0^Te^{-r(t)}\left(2\tilde{F}(\v^n(t))-\dot{r}(t)\v^n(t),\v^n(t)\right)\d
t\right]\nonumber\\&+2\mathbb{E}\left[\int_0^Te^{-r(t)}(g(t)\d
W(t),\v^n(t))\right]+\mathbb{E}\left[\int_0^Te^{-r(t)}\mathrm{Tr}(g^*g)(t)\d
t\right].
\end{align*}
But $2\int_0^Te^{-r(t)}(g(t)\d W(t),\v^n(t))$ is a martingale having
expectation zero. Then from the above equation, we have
\begin{align}\label{65}
\mathbb{E}\left[e^{-r(T)}|\v^n(T)|^2\right]&=\mathbb{E}\left[\int_0^Te^{-r(t)}\left(2\tilde{F}(\v^n(t))-\dot{r}(t)\v^n(t),\v^n(t)\right)\d
t\right.\nonumber\\&\left.\quad+\int_0^Te^{-r(t)}\mathrm{Tr}(g^*g)(t)\d
t\right].
\end{align}
From (\ref{61}) and (\ref{61a}), we also note that $\v$ has the
It\^{o} differential
\begin{align}\label{62}
\d \v(t)=\tilde{F}_0(t)\d t+g(t)\d W(t)\textrm{ weakly in
}\mathrm{L}^2(\Omega;\mathrm{L}^2(0,T;\mathbf{V}')),
\end{align}
with $\v(0)=0$, where
$\tilde{F}_0(t)=F_0(t)+P_{\mathbf{H}}\mathbf{f}_{\w}(t)$. Also $\v$
satisfies the energy equality
\begin{align}\label{62ab}
\d|\v(t)|^2=2(\tilde{F}_0(t),\v(t))\d t+2\left(g(t)\d
W(t),\v(t)\right)+\textrm{Tr}(g^*g)(t)\d t.
\end{align}
By applying It\^{o}'s lemma to the function $e^{-r(t)}|x|^2$ and to
the process $\v(\x,t,\omega)$ from (\ref{62}), we can show that
$\v(\x,t,\omega)$ satisfies
\begin{align}\label{65a}
\mathbb{E}\left[e^{-r(T)}|\v(T)|^2\right]&=\mathbb{E}\left[\int_0^Te^{-r(t)}\left(2\tilde{F}_0(t)-\dot{r}(t)\v(t),\v(t)\right)\d
t\right.\nonumber\\&\left.\quad+\int_0^Te^{-r(t)}\mathrm{Tr}(g^*g)(t)\d
t\right].
\end{align}

\noindent\textit{\textbf{Step (3)}  Lower semicontinuity property:}

Since $\v^n\to \v$ weak star in
$\mathrm{L}^2(\Omega;\mathrm{L}^\infty (0,T;\mathbf{H})\cap
\mathrm{L}^2(0,T;\mathbf{V}))$, by the lower semicontinuity property
of $\mathrm{L}^2$-norm, we have
\begin{align}\label{65ab}
\liminf_{n}\mathbb{E}\left[e^{-r(T)}|\v^n(T)|^2\right]\geq\mathbb{E}\left[e^{-r(T)}|\v(T)|^2\right].
\end{align}
Let us take $\mathlarger{\liminf_{n}}$ on the equation (\ref{65})
and by using (\ref{65ab}) and (\ref{65a}) to obtain
\begin{align}\label{66}
&\liminf_{n}\mathbb{E}\left[\int_0^Te^{-r(t)}\left(2\tilde{F}(\v^n(t))-\dot{r}(t)\v^n(t),\v^n(t)\right)\d
t\right]\nonumber\\&=\liminf_{n}\mathbb{E}\left[e^{-r(T)}|\v^n(T)|^2-\int_0^Te^{-r(t)}\mathrm{Tr}(g^*g)(t)\d
t\right]\nonumber\\
&\geq\mathbb{E}\left[e^{-r(T)}|\v(T)|^2-\int_0^Te^{-r(t)}\mathrm{Tr}(g^*g)(t)\d t\right]\nonumber\\
&=\mathbb{E}\left[\int_0^Te^{-r(t)}\left(2\tilde{F}_0(t)-\dot{r}(t)\v(t),\v(t)\right)\d
t\right].
\end{align}
Hence, we get
\begin{align}\label{67}
&\liminf_{n}\mathbb{E}\left[\int_0^Te^{-r(t)}\left(2\tilde{F}(\v^n(t))-\dot{r}(t)\v^n(t),\v^n(t)\right)\d
t\right]\nonumber\\&\geq
\mathbb{E}\left[\int_0^Te^{-r(t)}\left(2\tilde{F}_0(t)-\dot{r}(t)\v(t),\v(t)\right)\d
t\right].
\end{align}

\noindent\textit{\textbf{Step (4)}  Local Minty-Browder technique:}

For $\y(\omega,\x,t)\in\mathrm{L}^2(\Omega;\mathrm{L}^{\infty}
(0,T;\mathbf{H}_m))$ with $m\leq n$, let us set
\begin{align}\label{63}
r(t)=r(t,\omega)=2C(\omega)t+\frac{27}{2\nu
^3}\int_0^t\|\y(s)\|_{\mathrm{L}^4}^4\d s,
\end{align}
where
$C(\omega)=\frac{\beta_{20}^2(\omega)}{\nu}+|\eta_{21}(\omega)|+|\eta_{22}(\omega)|-\frac{\nu}{2C}$
as an adapted, continuous (and bounded in $\omega$) real-valued
process in $[0,T]$. Note that from Lemma \ref{LL} and Remark
\ref{LL1}, $r(t,\omega)$ is well defined.

Since
$\y(t)=\y(\omega,\x,t)\in\mathrm{L}^2(\Omega;\mathrm{L}^{\infty}
(0,T;\mathbf{H}_m)$ belongs to the closed $\mathrm{L}^4$-ball in
$\V$, from the Local Monotonicity Lemma \ref{mon}, by setting
$\z(t)=\v^n(t)-\y(t)$ and
$\tilde{F}(\v^n)=F(\v^n)+P_{\mathbf{H}}\mathbf{f}_{\w}$ with
$\mathbf{f}_{\w}\in\mathrm{L}^2(0,T;\V')$, we have
\begin{align}\label{68}
&\mathbb{E}\left[\int_0^Te^{-r(t)}\left(2\tilde{F}(\v^n(t))-2\tilde{F}(\y(t)),\v^n(t)-\y(t)\right)\d
t\right]\nonumber\\&-\mathbb{E}\left[\int_0^Te^{-r(t)}\dot{r}(t)|\v^n(t)-\y(t)|^2\d
t\right]\leq 0.
\end{align}
On rearranging the terms in the inequality (\ref{68}), we get
\begin{align}\label{68a}
&\mathbb{E}\left[\int_0^Te^{-r(t)}\left(2\tilde{F}(\v^n(t))-\dot{r}(t)\v^n(t),\v^n(t)-\y(t)\right)\d
t\right]\nonumber\\&\qquad\leq
\mathbb{E}\left[\int_0^Te^{-r(t)}\left(2\tilde{F}(\y(t))-\dot{r}(t)\y(t),\v^n(t)-\y(t)\right)\d
t\right].
\end{align}
Taking the limit as $n\rightarrow\infty$ in (\ref{68a}), one obtains
\begin{align}\label{68b}
&\liminf_n\mathbb{E}\left[\int_0^Te^{-r(t)}\left(2\tilde{F}(\v^n(t))-\dot{r}(t)\v^n(t),\v^n(t)-\y(t)\right)\d
t\right]\nonumber\\&\qquad\leq
\mathbb{E}\left[\int_0^Te^{-r(t)}\left(2\tilde{F}(\y(t))-\dot{r}(t)\y(t),\v(t)-\y(t)\right)\d
t\right].
\end{align}
By using (\ref{67}) and (\ref{68b}), we get
\begin{align}\label{67ab}
&
\mathbb{E}\left[\int_0^Te^{-r(t)}\left(2\tilde{F}_0(t)-\dot{r}(t)\v(t),\v(t)-\y(t)\right)\d
t\right]\nonumber\\&\leq
\liminf_{n}\mathbb{E}\left[\int_0^Te^{-r(t)}\left(2\tilde{F}(\v^n(t))-\dot{r}(t)\v^n(t),\v^n(t)-\y(t)\right)\d
t\right]\nonumber\\&\leq
\mathbb{E}\left[\int_0^Te^{-r(t)}\left(2\tilde{F}(\y(t))-\dot{r}(t)\y(t),\v(t)-\y(t)\right)\d
t\right].
\end{align} From the above inequality, we get  \begin{align}\label{67abc}
&
\mathbb{E}\left[\int_0^Te^{-r(t)}\left(2\tilde{F}_0(t)-\dot{r}(t)\v(t),\v(t)-\y(t)\right)\d
t\right]\nonumber\\&\leq
\mathbb{E}\left[\int_0^Te^{-r(t)}\left(2\tilde{F}(\y(t))-\dot{r}(t)\y(t),\v(t)-\y(t)\right)\d
t\right].
\end{align} On rearranging the terms in the inequality (\ref{67abc}), we obtain
\begin{align}\label{eq007}
&\mathbb{E}\left[\int_0^Te^{-r(t)}\left(2\tilde{F}_0(t)-2\tilde{F}(\y(t)),\v(t)-\y(t)\right)\d
t\right]\nonumber\\&-\mathbb{E}\left[\int_0^Te^{-r(t)}\dot{r}(t)|\v(t)-\y(t)|^2\d
t\right]\leq 0.
\end{align}
This estimate holds for any
$\y\in\mathrm{L}^2(\Omega;\mathrm{L}^{\infty}(0,T;\mathbf{H}_m))$
for any $m\in\mathbb{N}$.  It is clear by a density argument that
the above inequality remains the same for any $\y\in
\mathrm{L}^2(\Omega;C(0,T;\mathbf{H})\cap
 \mathrm{L}^2(0,T;\mathbf{V}))$.  Indeed, for
any $\y\in\mathrm{L}^2(\Omega;C(0,T;\mathbf{H})\cap
 \mathrm{L}^2(0,T;\mathbf{V})),$ there exits a
strongly convergent sequence $\y_m\in
\mathrm{L}^2(\Omega;C(0,T;\mathbf{H})\cap
\mathrm{L}^2(0,T;\mathbf{V}))$ that satisfies inequality
(\ref{eq007}).

Let us now take for any $\lambda>0$, $\y(t)=\v(t)-\lambda \z(t)$ in
(\ref{eq007}). Note that
$\v\in\mathrm{L}^2(\Omega;C(0,T;\mathbf{H})\cap
\mathrm{L}^2(0,T;\mathbf{V}))$ satisfies the It\^{o} differential
equation given by (\ref{62}) and $\z$ is an adapted process in
$\mathrm{L}^2(\Omega;C(0,T;\mathbf{H})\cap
\mathrm{L}^2(0,T;\mathbf{V})).$ Then from (\ref{eq007}), we have
\begin{align*}
\lambda
\mathbb{E}\left[\int_0^Te^{-r(t)}\left(2\tilde{F}_0(t)-2\tilde{F}(\v(t)-\lambda
\z(t)),\z(t)\right)\d t-\lambda
\int_0^Te^{-r(t)}\dot{r}(t)|\z(t)|^2\d t\right]\leq 0.
\end{align*}
Dividing by $\lambda$ on both sides of the above inequality and
letting $\lambda$ go to $0$ and using the hemicontinuity property of
$F$, one obtains
\begin{align*}
\mathbb{E}\left[\int_0^Te^{-r(t)}\left(\tilde{F}_0(t)-\tilde{F}(\v(t)),\z(t)\right)\d
t\right]\leq 0.
\end{align*}
(For more details see Fernando, Sritharan and Xu \cite{FSX}.) Since
$\z$ is arbitrary, we conclude that
$\tilde{F}_0(t)=\tilde{F}(\v(t))$. Thus the existence of the strong
solutions of the stochastic system (\ref{60}) has been proved.\\

\noindent \textbf{Part II: Pathwise Uniqueness of Strong Solution.} \\
Let $\v_1\in \mathrm{L}^2(\Omega;C(0,T;\mathbf{H})\cap
\mathrm{L}^2(0,T;\mathbf{V}))$ be another solution of the equation
(\ref{60}). Then $\vartheta=\v-\v_1$ solves the stochastic
differential equation in
$\mathrm{L}^2(\Omega;\mathrm{L}^2(0,T,\mathbf{V}'))$, given by
\begin{align}\label{69}
\d \vartheta=\left({F}(\v(t))-{F}(\v_1(t))\right)\d t,\;\;
\vartheta(0)=0.
\end{align}
On multiplying (\ref{69}) by $e^{-r(t)}\vartheta(t)$, we obtain
\begin{align*}
\d
\left[e^{-r(t)}|\vartheta(t)|^2\right]=-e^{-r(t)}\dot{r}(t)|\vartheta(t)|^2\d
t+2e^{-r(t)}\left(F(\v(t))-F(\v_1(t)),\vartheta(t)\right)\d t.
\end{align*}
Now let us use the local monotonicity condition (\ref{31}). Remark
\ref{LL1} gives $\v_1$ in the $\mathrm{L}^4$-ball of $\V$, then by
taking $\z=\v-\v_1$ in (\ref{34}) and by choosing $r(t)$ as in
(\ref{63}), one gets
\begin{align*}
\d\left[e^{-r(t)}|\vartheta(t)|^2\right]\leq 0\Rightarrow
e^{-r(t)}|\vartheta(t)|^2\leq |\vartheta(0)|^2=0.
\end{align*}
Hence the uniqueness of $\v(\x,t,\omega)$ satisfying the system of
equations (\ref{6}) - (\ref{7}) has been proved.
\end{proof}

\section{Characterization of the Perturbation Pressure}
In this section, we characterize the perturbation pressure,
$q(\cdot,\cdot,\cdot)=p(\cdot,\cdot,\cdot)-P(\cdot,\cdot,\cdot)$ by
using a generalization of de Rham's theorem (\cite{SJ}) to processes
(see \cite{LRS} for more details). We have constructed the basic
vector field in such a way that the random basic pressure field
$P(\x,t,\omega)$ tends to infinity in each of the outlets
$\mathbf{O}_i$ as $|x|\to\infty$.

\begin{theorem}\label{pt1}
[A Generalization of de Rham's theorem to Processes] Let $D$ be a
bounded, connected and Lipschitz open subset of $\mathbb{R}^2$.
Given $r_0\in[1,\infty]$, $r_1\in[1,\infty]$ and $s_1\in\mathbb{Z},$
let
\begin{equation}\label{eqpt1}
\chi\in
\mathrm{L}^{r_0}\left(\Omega;\mathrm{W}^{s_1,r_1}(0,T;H^{-1}(D))\right)
\end{equation}
be such that, for all $v\in\mathscr{D}(D),$ $\nabla\cdot v=0$,
$\mathbb{P}$ - a. s.,
\begin{equation}\label{eqpt2}
\left<\chi,v\right>_{\mathscr{D}'(D)\times\mathscr{D}(D)}=0\textrm{
in } \mathscr{D}'(0,T),
\end{equation}
where $\mathscr{D}'(D)$ denotes the dual space of $\mathscr{D}(D)$.
Then, there exists a unique
\begin{equation}\label{eqpt3}
q\in\mathrm{L}^{r_0}\left(\Omega;
\mathrm{W}^{s_1,r_1}(0,T;\mathrm{L}^2(D))\right)
\end{equation}
such that $\mathbb{P}$ - a. s.,
\begin{align}\label{eqpt4}
\nabla q&=\chi\textrm{ in } \mathscr{D}'((0,T)\times D),\\
\int_Dq\d \mathbf{x}&=0\textrm{ in }\mathscr{D}'(0,T).
\end{align}
\end{theorem}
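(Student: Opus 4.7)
The plan is to reduce the claim to the classical (deterministic) de Rham/Ne\v{c}as theorem by exhibiting a bounded linear right inverse of the gradient on the appropriate annihilator subspace and then applying it parameterwise in $(t,\omega)$. First I would invoke the classical result: since $D$ is bounded, connected and Lipschitz, Ne\v{c}as' inequality implies that the gradient
\[
\nabla:\mathrm{L}^2_0(D):=\left\{q\in\mathrm{L}^2(D):\int_D q\,\d\x=0\right\}\longrightarrow \mathrm{H}^{-1}(D)
\]
is a continuous injection with closed range equal to the annihilator
\[
V_0^{\perp}:=\left\{\chi\in \mathrm{H}^{-1}(D):\left<\chi,v\right>_{\mathscr{D}'(D)\times\mathscr{D}(D)}=0\ \forall\, v\in\mathscr{D}(D),\ \nabla\cdot v=0\right\}.
\]
By the bounded inverse theorem there exists a bounded linear operator $\mathscr{R}:V_0^{\perp}\to\mathrm{L}^2_0(D)$ satisfying $\nabla\mathscr{R}\chi=\chi$ and $\int_D\mathscr{R}\chi\,\d\x=0$ for every $\chi\in V_0^{\perp}$.

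Next, for each divergence-free $v\in\mathscr{D}(D)$, the scalar map $t\mapsto\left<\chi(\cdot,t,\omega),v\right>$ lies in $\mathrm{W}^{s_1,r_1}(0,T)\subset\mathscr{D}'(0,T)$ for $\mathbb{P}$--a.e.\ $\omega$ by \eqref{eqpt1}, and is the zero distribution by \eqref{eqpt2}; hence it vanishes almost everywhere in $t$. Picking a countable family $\{v_k\}$ dense in the divergence-free subspace of $H^1_0(D)$ collapses the $v$-indexed null sets into a single joint null set, outside of which $\chi(\cdot,t,\omega)\in V_0^{\perp}$ for a.e.\ $t\in(0,T)$. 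Define then
\[
q(\cdot,t,\omega):=\mathscr{R}\!\left[\chi(\cdot,t,\omega)\right].
\]
Since $\mathscr{R}$ is a bounded linear operator between separable Hilbert spaces, it lifts to a bounded map at the Bochner--Sobolev level, so that
\[
\|q\|_{\mathrm{L}^{r_0}(\Omega;\mathrm{W}^{s_1,r_1}(0,T;\mathrm{L}^2(D)))}\leq \|\mathscr{R}\|\,\|\chi\|_{\mathrm{L}^{r_0}(\Omega;\mathrm{W}^{s_1,r_1}(0,T;\mathrm{H}^{-1}(D)))},
\]
yielding \eqref{eqpt3}; by construction $\nabla q=\chi$ and $\int_D q\,\d\x=0$ in the sense of \eqref{eqpt4}. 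Uniqueness is immediate: any two candidates $q_1,q_2$ give $\nabla(q_1-q_2)=0$ in $\mathscr{D}'((0,T)\times D)$, so $q_1-q_2$ is spatially constant a.e.\ in $(t,\omega)$, and the mean-zero normalization eliminates that constant.

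The only genuinely delicate point is the passage from ``\eqref{eqpt2} holds in $\mathscr{D}'(0,T)$ for each individual $v$'' to ``$\chi(\cdot,t,\omega)\in V_0^{\perp}$ simultaneously for all admissible $v$, for a.e.\ $(t,\omega)$'', which requires the separability argument sketched above together with some measurability bookkeeping inside the Bochner space $\mathrm{L}^{r_0}(\Omega;\mathrm{W}^{s_1,r_1}(0,T;\mathrm{H}^{-1}(D)))$. Once that is in place, everything reduces to applying the bounded operator $\mathscr{R}$ pointwise, and the Ne\v{c}as--type bound transports the full regularity of $\chi$ onto $q$.
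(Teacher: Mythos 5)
Your overall strategy is sound and is in fact essentially the strategy of the proof that the paper points to: the paper does not prove Theorem \ref{pt1} itself but simply cites Theorem 4.1 of Langa, Real and Simon \cite{LRS}, whose argument likewise rests on the classical Ne\v{c}as--de~Rham fact that $\nabla$ is an isomorphism from $\mathrm{L}^2(D)/\mathbb{R}$ onto the annihilator in $H^{-1}(D)$ of the divergence-free test fields, followed by a lift of the resulting bounded right inverse $\mathscr{R}$ through the Bochner--Sobolev scale in $t$ and then through $\mathrm{L}^{r_0}(\Omega;\cdot)$. Your identification of the measurability and separability bookkeeping as the delicate point, and your final norm estimate $\|q\|\leq\|\mathscr{R}\|\,\|\chi\|$, are both correct in spirit.

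There is, however, one step that fails as written, and it fails precisely in the case the paper needs. For $s_1<0$ (the application in Section 8 uses $s_1=-1$, $r_1=\infty$), an element $\chi$ of $\mathrm{W}^{s_1,r_1}(0,T;H^{-1}(D))$ is a distribution in $t$, so ``$\chi(\cdot,t,\omega)$ for a.e.\ $t$'' is not defined; the phrase ``the scalar map $t\mapsto\left<\chi(\cdot,t,\omega),v\right>$ vanishes almost everywhere in $t$'' therefore has no meaning, and you cannot apply $\mathscr{R}$ pointwise in $t$. The standard repair (and the one underlying \cite{SJ} and \cite{LRS}) is to smear in time first: hypothesis \eqref{eqpt2} says exactly that for every $\varphi\in\mathscr{D}(0,T)$ the element $\left<\chi,\varphi\right>\in H^{-1}(D)$ annihilates all divergence-free $v\in\mathscr{D}(D)$, hence lies in the domain of $\mathscr{R}$, and one defines $q$ by transposition, $\left<q,\varphi\right>:=\mathscr{R}\left(\left<\chi,\varphi\right>\right)$; equivalently, one writes $\chi$ as a finite sum of time derivatives of $\mathrm{L}^{r_1}(0,T;H^{-1}(D))$ representatives and uses that $\mathscr{R}$, being a fixed bounded operator acting only in the space variable, commutes with $\partial_t$. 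The membership $q\in\mathrm{W}^{s_1,r_1}(0,T;\mathrm{L}^2(D))$ and your norm bound then follow from the definition of the negative-order norm, and the same caveat (and the same fix) applies to your uniqueness argument, where ``spatially constant a.e.\ in $t$'' must likewise be interpreted after smearing. For $s_1\geq0$ your pointwise argument is fine once the countable-dense-family reduction you sketch is carried out.
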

\begin{proof}
For proof see Theorem 4.1 of \cite{LRS}.
\end{proof}
\begin{lemma}\label{pt1b}
Let $\Theta$ be an admissible channel domain, let
$\tilde{\Theta}\subset\Theta$ ($\tilde{\Theta}$ is non-empty) be a
bounded subdomain with $\overline{\tilde{\Theta}}\subset\Theta$. For
any given $r_0\in[1,\infty]$, $r_1\in[1,\infty]$ and
$s_1\in\mathbb{Z}$, let
\begin{equation}\label{eqpt1a} \chi\in
\mathrm{L}^{r_0}\left(\Omega;\mathrm{W}^{s_1,r_1}(0,T;H^{-1}(\Theta))\right)
\end{equation}
be such that, for all $v\in\mathscr{D}(\Theta),$ $\nabla\cdot v=0$,
$\mathbb{P}$ - a. s.,
\begin{equation}\label{eqpt2a}
\left<\chi,v\right>_{\mathscr{D}'(\Theta)\times\mathscr{D}(\Theta)}=0\textrm{
in } \mathscr{D}'(0,T).
\end{equation} Then there exists a unique
$$q\in\mathrm{L}^{r_0}\left(\Omega;
\mathrm{W}^{s_1,r_1}(0,T;\mathrm{L}^2_{loc}(\Theta))\right),$$
satisfying $$\chi=\nabla q$$ in the sense of distributions in
$(0,T)\times\Theta$, and $$\int_{\tilde{\Theta}}q\d \x=0$$ for
almost all $t\in[0,T]$.
\end{lemma}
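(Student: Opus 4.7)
The plan is to reduce Lemma \ref{pt1b} to the bounded-domain version (Theorem \ref{pt1}) by exhausting the unbounded channel $\Theta$ with an increasing sequence of bounded Lipschitz subdomains and patching the resulting local pressures together using a consistent normalization. First, I would pick an exhaustion $\{\Theta_k\}_{k\in\mathbb{N}}$ of bounded, connected, Lipschitz open sets with $\overline{\tilde{\Theta}}\subset\Theta_1$, $\overline{\Theta_k}\subset\Theta_{k+1}\Subset\Theta$, and $\bigcup_k\Theta_k=\Theta$; this is possible since $\Theta$ is a smooth admissible channel domain and one can truncate the two outlets at increasing distances and round off the corners.

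Next, I would apply Theorem \ref{pt1} on each $\Theta_k$ to the restriction $\chi|_{\Theta_k}$ (which inherits the regularity \eqref{eqpt1a} and the divergence-free test condition \eqref{eqpt2a} because $\mathscr{D}(\Theta_k)\subset\mathscr{D}(\Theta)$). This yields, for each $k$, a unique
\[
\tilde{q}_k\in\mathrm{L}^{r_0}\!\left(\Omega;\mathrm{W}^{s_1,r_1}(0,T;\mathrm{L}^2(\Theta_k))\right)
\]
with $\nabla\tilde{q}_k=\chi$ in $\mathscr{D}'((0,T)\times\Theta_k)$ and $\int_{\Theta_k}\tilde{q}_k\,\d\x=0$ in $\mathscr{D}'(0,T)$. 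On the overlap $\Theta_k\subset\Theta_{k+1}$ we then have $\nabla(\tilde{q}_{k+1}-\tilde{q}_k)=0$ in $\Theta_k$, so by connectedness of $\Theta_k$, $\tilde{q}_{k+1}-\tilde{q}_k=c_k(t,\omega)$ is a space-constant with the same regularity in $(t,\omega)$ as the $\tilde{q}_k$.

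To patch, I would renormalize by setting $q_k:=\tilde{q}_k-\frac{1}{|\tilde{\Theta}|}\int_{\tilde{\Theta}}\tilde{q}_k\,\d\x$ on $\Theta_k$ for each $k$; since $\overline{\tilde{\Theta}}\subset\Theta_k$ for all $k\geq 1$, this integral is well defined and has the same $(t,\omega)$-regularity. Because $\tilde{q}_{k+1}-\tilde{q}_k$ is spatially constant on $\Theta_k$, the renormalized functions satisfy $q_{k+1}|_{\Theta_k}=q_k$ for all $k$, and $\int_{\tilde{\Theta}}q_k\,\d\x=0$ in $\mathscr{D}'(0,T)$. Hence there is a well-defined $q$ on $(0,T)\times\Theta$ (a.s.) with $q|_{\Theta_k}=q_k$, and $q\in\mathrm{L}^{r_0}(\Omega;\mathrm{W}^{s_1,r_1}(0,T;\mathrm{L}^2_{\mathrm{loc}}(\Theta)))$ since every compact subset of $\Theta$ lies in some $\Theta_k$. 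Uniqueness follows because any other such $q'$ would differ from $q$ by a spatially constant process (by de Rham applied on each $\Theta_k$), and the condition $\int_{\tilde{\Theta}}q\,\d\x=0=\int_{\tilde{\Theta}}q'\,\d\x$ forces that constant to vanish.

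The main subtlety I expect is verifying that the renormalizing constants $\frac{1}{|\tilde{\Theta}|}\int_{\tilde{\Theta}}\tilde{q}_k\,\d\x$ carry the full probabilistic and temporal regularity $\mathrm{L}^{r_0}(\Omega;\mathrm{W}^{s_1,r_1}(0,T))$ and that the equality $q_{k+1}|_{\Theta_k}=q_k$ holds in the distributional-in-time sense used by Theorem \ref{pt1}; both should follow by integrating against a fixed test function on $\tilde{\Theta}$ and exchanging the spatial integration with the action of $\mathrm{W}^{s_1,r_1}(0,T;\mathrm{L}^2(\Theta_k))$, but care is needed when $s_1<0$. Everything else is bookkeeping once the exhaustion and normalization are fixed.
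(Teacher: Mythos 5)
Your proposal is correct and is essentially the argument behind the references the paper cites in lieu of a proof (Lemma 1.4.2, Ch.~2 of Sohr for the deterministic case and Remark 4.3 of Langa--Real--Simon for the stochastic one): exhaust $\Theta$ by bounded connected Lipschitz subdomains, apply the bounded-domain stochastic de Rham theorem (Theorem \ref{pt1}) on each, observe the local pressures differ by spatial constants, and renormalize over the fixed subdomain $\tilde{\Theta}$ to patch them into a global $q\in\mathrm{L}^{r_0}(\Omega;\mathrm{W}^{s_1,r_1}(0,T;\mathrm{L}^2_{loc}(\Theta)))$. The one point you flag --- that for $s_1<0$ the statement ``$\tilde{q}_{k+1}-\tilde{q}_k$ is spatially constant'' and the regularity of the normalizing averages must be read through the continuous linear maps $u\mapsto\nabla u$ and $u\mapsto|\tilde{\Theta}|^{-1}\int_{\tilde{\Theta}}u\,\d\x$ acting on $\mathrm{L}^2(\Theta_k)$-valued time-distributions --- is indeed the only subtlety, and it resolves exactly as you indicate.
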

The proof for deterministic case can be found in Lemma 1.4.2,
Chapter $2$, page 202 of Sohr \cite{SoH} for any general unbounded
domain in $\mathbb{R}^n,n\geq 2$. Also see the Remark 4.3 of  Langa,
Real, and Simon \cite{LRS} for stochastic case.

We will now prove the existence and uniqueness of the perturbation
pressure using Lemma \ref{pt1b}.

\begin{theorem}
There exists a unique scalar distribution
$$q\in\mathrm{L}^1\left(\Omega;\mathrm{W}^{-1,\infty}(0,T;\mathrm{L}^2_{loc}(\Theta))\right)$$
such that (\ref{6})-(\ref{7}) are satisfied in the sense of
distributions.
\end{theorem}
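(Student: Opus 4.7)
The plan is to recover the pressure by applying the stochastic de Rham theorem (Lemma \ref{pt1b}) to the ``residual'' distribution obtained by writing down the momentum equation \eqref{6} and isolating all terms other than $\nabla q$. First I would define, formally,
\[
\chi := -\v_t - B(\v,\v) - B_1(\w)\v - B_2(\w)\v + \nu\Delta\v + \mathbf{f}_{\w} + \dot{\mathscr{G}},
\]
interpreted as an element of $\mathscr{D}'((0,T)\times\Theta)$. Since $\v$ is the strong solution produced by Theorem \ref{mainth}, it satisfies
\[
\d\v = F(\v(t))\,\d t + P_{\mathbf{H}}\mathbf{f}_{\w}(t)\,\d t + g(t)\,\d W(t)
\]
weakly in $\mathbf{V}'$; pairing this identity with an arbitrary $v\in\mathscr{D}(\Theta)$ with $\nabla\cdot v=0$ and using the fact that $P_{\mathbf{H}}v=v$ shows that $\langle\chi,v\rangle_{\mathscr{D}'(\Theta)\times\mathscr{D}(\Theta)}=0$ in $\mathscr{D}'(0,T)$, $\mathbb{P}$-a.s.

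Next I would verify the regularity needed to invoke Lemma \ref{pt1b} with $r_0=1$, $r_1=\infty$, $s_1=-1$. The estimates from Theorem \ref{energy3} and Lemma \ref{LL} give $\v\in\mathrm{L}^2(\Omega;C(0,T;\mathbf{H})\cap\mathrm{L}^2(0,T;\mathbf{V}))\subset\mathrm{L}^4(\Omega\times(0,T)\times\Theta)$, so that $B(\v,\v)$, $B_1(\w)\v$, $B_2(\w)\v$, and $\nu\Delta\v$ all lie in $\mathrm{L}^2(\Omega;\mathrm{L}^2(0,T;H^{-1}(\Theta)))$; the Proposition in Section 5 gives $\mathbf{f}_{\w}\in\mathrm{L}^2(\Omega;\mathrm{L}^2(0,T;\mathbf{V}'))\subset\mathrm{L}^2(\Omega;\mathrm{L}^2(0,T;H^{-1}(\Theta)))$; and the Wiener integral $\int_0^\cdot g(s)\,\d W(s)\in\mathrm{L}^2(\Omega;C([0,T];\mathbf{H}))$ has its time derivative $\dot{\mathscr{G}}$ in $\mathrm{L}^2(\Omega;\mathrm{W}^{-1,\infty}(0,T;\mathbf{H}))$, while $\v_t$ equals the sum of the drift (in $\mathrm{L}^2(\Omega;\mathrm{L}^2(0,T;\mathbf{V}'))$) and $\dot{\mathscr{G}}$. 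Absorbing the weakest time regularity, $\chi\in\mathrm{L}^2(\Omega;\mathrm{W}^{-1,\infty}(0,T;H^{-1}(\Theta)))\subset\mathrm{L}^1(\Omega;\mathrm{W}^{-1,\infty}(0,T;H^{-1}(\Theta)))$.

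With $\chi$ satisfying both hypotheses of Lemma \ref{pt1b}, I would conclude that there exists a unique
\[
q\in\mathrm{L}^1\!\left(\Omega;\mathrm{W}^{-1,\infty}(0,T;\mathrm{L}^2_{loc}(\Theta))\right)
\]
(normalized, say, by $\int_{\tilde\Theta}q\,\d\x=0$ on some fixed bounded subdomain $\tilde\Theta\Subset\Theta$) with $\chi=\nabla q$ in $\mathscr{D}'((0,T)\times\Theta)$. Rewriting this identity is exactly \eqref{6} in the distributional sense, while \eqref{2}--\eqref{5} and \eqref{7} hold by the very construction of $\v$ and $\w$.

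The main obstacle is the stochastic drift: one must handle $\v_t$ and $\dot{\mathscr{G}}$ simultaneously as time distributions rather than as pointwise-in-$t$ objects, so that the pairing against divergence-free test functions and the application of de Rham's theorem live in the same functional space $\mathrm{W}^{-1,\infty}(0,T;H^{-1})$. A secondary technical point is that $\Theta$ is unbounded, which forces the use of the localized version of de Rham (Lemma \ref{pt1b}) and explains why the target regularity for $q$ is only $\mathrm{L}^2_{loc}(\Theta)$ in space, consistent with the fact that the basic pressure $P$ itself blows up linearly in the outlets.
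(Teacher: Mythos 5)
Your proposal is correct and follows essentially the same route as the paper: isolate the residual $\chi$ from the momentum equation, show it annihilates divergence-free test functions via the weak formulation for $\v$, verify term by term that $\chi\in\mathrm{L}^1\left(\Omega;\mathrm{W}^{-1,\infty}(0,T;H^{-1}(\Theta))\right)$ (using the embedding $\mathrm{L}^1(0,T;H^{-1})\subset\mathrm{W}^{-1,\infty}(0,T;H^{-1})$ for the drift terms), and invoke the localized stochastic de Rham lemma. The only cosmetic difference is your intermediate claim of $\mathrm{L}^2$-integrability in $\omega$ for $\chi$, which is not obviously justified for the quadratic term $B(\v,\v)$, but is harmless since you immediately relax to the $\mathrm{L}^1(\Omega)$ setting actually required by Lemma \ref{pt1b}.
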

\begin{proof}
By the existence and uniqueness theorem on strong solutions, there
exists a process $\v(\x,t,\omega)$ satisfying (\ref{6})-(\ref{7})
and
\begin{equation}\label{eqpt6}
\v(\x,t,\omega)\in\mathrm{L}^2\left(\Omega;C(0,T;\mathbf{H})\cap
\mathrm{L}^2(0,T;\mathbf{V})\right).
\end{equation}
Also $\v(\x,t,\omega)$ satisfies the variational equation
$\mathbb{P}$ - a. s., for all $t\in[0,T]$
\begin{align}\label{eqpt7}
&\int_{\Theta}\v(t)\cdot
\mathcal{Z}\d\x=-\nu\int_0^t\int_{\Theta}\nabla\v(s)\cdot\nabla
\mathcal{Z}\d\x\d
s-\int_0^t\int_{\Theta}(\v(s)\cdot\nabla\v(s))\cdot
\mathcal{Z}\d\x\d
s\nonumber\\
&\qquad-\int_0^t\int_{\Theta}(\v(s)\cdot\nabla\w(s))\cdot
\mathcal{Z}\d\x\d
s-\int_0^t\int_{\Theta}(\w(s)\cdot\nabla\v(s))\cdot
\mathcal{Z}\d\x\d
s\nonumber\\&\qquad+\int_0^t\left<\mathbf{f}_{\w}(s),\mathcal{Z}\right>_{H^{-1}(\Theta)\times
H_0^1(\Theta)}\d s+\int_0^t(g(s)\d W(s), \mathcal{Z}),
\end{align}
for all $\mathcal{Z}\in H_0^1(\Theta)$ such that
$\nabla\cdot\mathcal{Z}=0$. Let us differentiate (\ref{eqpt7}) with
respect to $t$ ($\omega\in\Omega$ being fixed), we get in
$\mathscr{D}'(0,T)$
\begin{align}\label{eqpt8}
&-\int_{\Theta}\partial_t\v\cdot
\mathcal{Z}\d\x-\nu\int_{\Theta}\nabla\v\cdot\nabla
\mathcal{Z}\d\x-\int_{\Theta}(\v\cdot\nabla\v)\cdot
\mathcal{Z}\d\x-\int_{\Theta}(\w\cdot\nabla\v)\cdot
\mathcal{Z}\d\x\nonumber\\&-\int_{\Theta}(\v\cdot\nabla\w)\cdot
\mathcal{Z}\d\x+\left<\mathbf{f}_{\w}(\cdot),\mathcal{Z}\right>_{H^{-1}(\Theta)\times
H_0^1(\Theta)}+(\dot{\mathscr{G}}(\cdot), \mathcal{Z})=0.
\end{align}

Since $\mathcal{Z}\in H_0^1(\Theta),$ and $\left(\nabla\v,\nabla
\mathcal{Z}\right)=-\left(\Delta\v,\mathcal{Z}\right),$ from the
equation (\ref{eqpt8}), we have
\begin{align}\label{eqpt9}
\left<-\partial_t\v+\nu\Delta\v-\v\cdot\nabla\v-\v\cdot\nabla\w-\w\cdot\nabla\v+\mathbf{f}_{\w}+\dot{\mathscr{G}}(\x,t),\mathcal{Z}\right>_{_{H^{-1}(\Theta)\times
H_0^1(\Theta)}}=0.
\end{align}
Let us denote
$\chi=-\partial_t\v+\nu\Delta\v-\v\cdot\nabla\v-\v\cdot\nabla\w-\w\cdot\nabla\v+\mathbf{f}_{\w}+\dot{\mathscr{G}}(\x,t)$.
We will prove that
\begin{equation}\label{eqpt10}
\chi\in\mathrm{L}^1\left(\Omega;
\mathrm{W}^{-1,\infty}(0,T;H^{-1}(\Theta))\right).
\end{equation}
Since $\partial_t$ is linear continuous from
$\mathrm{L}^{\infty}(0,T;\mathbf{H})$ into
$\mathrm{W}^{-1,\infty}(0,T;\mathbf{H})$ and then into
$\mathrm{W}^{-1,\infty}(0,T;H^{-1}(\Theta))$, by using
(\ref{eqpt6}), we have $$\partial_t\v\in\mathrm{L}^1\left(\Omega;
\mathrm{W}^{-1,\infty}(0,T;H^{-1}(\Theta))\right).$$ We know that
$\Delta$ is a linear continuous operator from $H^1(\Theta)$, and
then from $\mathbf{V}$, into $H^{-1}(\Theta)$. Hence (\ref{eqpt6})
implies that
$$\nu\Delta\v\in\mathrm{L}^2\left(\Omega;\mathrm{L}^2(0,T,H^{-1}(\Theta))\right)\subset\mathrm{L}^1\left(\Omega;\mathrm{L}^1(0,T,H^{-1}(\Theta))\right).$$
Now for every $\mathbf{v}\in\mathrm{L}^1(0,T;H^{-1}(\Theta))$, we
have $\mathbf{v}=\partial_t\int_0^{\cdot}\mathbf{v}$ and
$\int_0^{\cdot}\mathbf{v}\in
\mathrm{L}^{\infty}(0,T;H^{-1}(\Theta))$. This gives $\mathbf{v}\in
W^{-1,\infty}(0,T;H^{-1}(\Theta)),$ by using the definition of
$W^{-1,\infty}(0,T;H^{-1}(\Theta))$. Hence, we have the topological
embedding \begin{equation}\label{eqpt11}
\textrm{L}^1(0,T;H^{-1}(\Theta))\subset
\textrm{W}^{-1,\infty}(0,T;H^{-1}(\Theta)).
\end{equation}
Hence, we get $$\nu\Delta \v\in
\mathrm{L}^1\left(\Omega;W^{-1,\infty}(0,T;H^{-1}(\Theta))\right).$$

From Lemma \ref{buu}, Lemma \ref{bwv}, Lemma \ref{bvw1}, Lemma
\ref{bvw}, it is straight forward that the terms $\v\cdot\nabla \v$,
$\v\cdot\nabla\w$ and $\w\cdot\nabla \v$ are in
$\mathrm{L}^2\left(\Omega;\mathrm{L}^2(0,T;H^{-1}(\Theta))\right)$
which is clearly included in the space $\mathrm{L}^1\left(\Omega;
\mathrm{W}^{-1,\infty}(0,T;H^{-1}(\Theta))\right)$.

The term $\mathbf{f}_{\w}\equiv 0$ in $\mathbf{O}_i\backslash
\hat{\Theta}_0$ and from the estimate
$\w_t-\nu\Delta\w+\w\cdot\nabla\w\in
\mathrm{L}^2\left(\Omega;\mathrm{L}^2(0,T;\mathrm{L}^2(\Theta_0))\right)\subset\mathrm{L}^2\left(\Omega;\mathrm{L}^2(0,T;H^{-1}(\Theta_0))\right),$
we have $$\mathbf{f}_{\w}\in
\mathrm{L}^2\left(\Omega;\mathrm{L}^2(0,T;H^{-1}(\Theta))\right)\subset\mathrm{L}^1\left(\Omega;\mathrm{L}^1(0,T;H^{-1}(\Theta))\right).$$
Hence, by the embedding (\ref{eqpt11}), we get
$\mathbf{f}_{\w}\in\mathrm{L}^1\left(\Omega;
\mathrm{W}^{-1,\infty}(0,T;H^{-1}(\Theta))\right)$.

Since $\int_0^{\cdot}g(s)\d
W(s)\in\mathrm{L}^2(\Omega;C([0,T];\mathrm{L}^2(\Theta))),$ its
time-derivative $\dot{\mathscr{G}}(\cdot,\cdot,\cdot)$ satisfies
$\dot{\mathscr{G}}(\x,t,\omega)\in\mathrm{L}^2\left(\Omega;\mathrm{W}^{-1,\infty}(0,T;\mathrm{L}^2(\Theta))\right),$
which is clearly included in the required space. Hence, we have
(\ref{eqpt10}).

Finally by applying Lemma \ref{pt1b}, we get a unique
$$q\in\mathrm{L}^1\left(\Omega;
\mathrm{W}^{-1,\infty}(0,T;\mathrm{L}^2_{loc}(\Theta))\right)$$ such
that $\chi=\nabla q$ and the Navier Stokes Equations are satisfied
in the distributional sense.
\end{proof}

\section{Conclusion}\setcounter{equation}{0}

\begin{theorem}\label{thm11}
Let $\Theta$ be an admissible channel domain. Then for any given
random net flux
$\mathscr{F}(t,\omega)\in\mathrm{L}^2(\Omega;H^1(0,T))$ through the
channel domain, there exits a unique pathwise strong solution
$\u(\x,t,\omega)$ and pressure $p(\x,t,\omega)$ such that,
$$\u-\w\in \mathrm{L}^2\left(\Omega;C(0,T;\mathbf{H})\cap
\mathrm{L}^2(0,T;\mathbf{V})\cap\mathrm{L}^{\infty}(0,T:\mathbf{H})\right)$$
and
$$p-P\in\mathrm{L}^1\left(\Omega;\mathrm{W}^{-1,\infty}(0,T;\mathrm{L}^2_{loc}(\Theta))\right)$$
of the problem (\ref{1})-(\ref{5}), where $\w$ is the constructed
basic vector field with $\int_{\Gamma}\u\cdot\n\d
S=\int_{\Gamma}\w\cdot\n\d S=\mathscr{F}(t,\omega)$ for each
$\omega\in\Omega$ and $P$ is the constructed scalar field.
\end{theorem}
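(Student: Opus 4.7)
The plan is to assemble the final result from the machinery developed in Sections~3--8 via the decomposition $\u = \v + \w$ and $p = q + P$, where $\w$ is the basic divergence-free vector field carrying the prescribed flux $\mathscr{F}(t,\omega)$ and $P$ is the associated scalar field, both constructed explicitly in Section~3. The first step would be to invoke Proposition~\ref{prop11} to convert the hypothesis $\mathscr{F}\in \mathrm{L}^2(\Omega;H^1(0,T))$ into the required integrability of the auxiliary quantities $\beta_{ij}$, so that $\w$ satisfies all regularity properties (i)--(v) listed at the end of Section~3 and the forcing $\mathbf{f}_{\w} = \nu\Delta\w - \w\cdot\nabla\w - \w_t - \nabla P$ belongs to $\mathrm{L}^2(\Omega;\mathrm{L}^2(0,T;\mathbf{V}'))$ with compact support in $\overline{\Theta_0}$.

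Next, I would apply Theorem~\ref{mainth} to the perturbed system~\eqref{6}--\eqref{7} to obtain a unique adapted process
\[
\v \in \mathrm{L}^2\bigl(\Omega; C(0,T;\mathbf{H}) \cap \mathrm{L}^2(0,T;\mathbf{V})\bigr)
\]
satisfying the projected equation~\eqref{eqn111} together with the a priori bounds~\eqref{9a} and~\eqref{36a}; the $\mathrm{L}^\infty(0,T;\mathbf{H})$ regularity then follows from the sup-in-time energy estimate of Theorem~\ref{energy3}. Setting $\u = \v + \w$ and checking the boundary and flux conditions is straightforward: on $\partial\Theta$ both $\v$ and $\w$ vanish, while the flux across any cross section $\Gamma$ equals $\int_\Gamma \w\cdot\n\,\mathrm{d}S = \mathscr{F}(t,\omega)$ because $\int_\Gamma \v\cdot\n\,\mathrm{d}S = 0$ by construction of $\mathbf{V}$.

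For the pressure I would recover $q$ via the generalization of de Rham's theorem to processes (Theorem~\ref{pt1} and Lemma~\ref{pt1b}): the variational identity satisfied by $\v$ implies that the distribution
\[
\chi = -\partial_t\v + \nu\Delta\v - \v\cdot\nabla\v - \v\cdot\nabla\w - \w\cdot\nabla\v + \mathbf{f}_{\w} + \dot{\mathscr{G}}
\]
annihilates every divergence-free test field, and each of its constituent terms was shown in the previous section to lie in $\mathrm{L}^1(\Omega;\mathrm{W}^{-1,\infty}(0,T;H^{-1}(\Theta)))$. Lemma~\ref{pt1b} then yields a unique scalar $q$ in $\mathrm{L}^1(\Omega; \mathrm{W}^{-1,\infty}(0,T;\mathrm{L}^2_{\mathrm{loc}}(\Theta)))$ with $\nabla q = \chi$, and setting $p = q + P$ gives the required pressure decomposition, so that $(\u,p)$ solves~\eqref{1}--\eqref{5} in the sense of distributions.

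Uniqueness of the pair $(\u-\w,\,p-P)$ reduces, after the same change of variables, to the pathwise uniqueness for $\v$ established in Part~II of Theorem~\ref{mainth} via the local monotonicity Lemma~\ref{mon}, and to the uniqueness clause in Lemma~\ref{pt1b} for $q$. The main obstacle, in my view, is not in any single step but in the careful bookkeeping of regularity classes: one must verify that every term appearing in $\chi$ has been placed in a common space to which Lemma~\ref{pt1b} applies (using in particular the embedding $\mathrm{L}^1(0,T;H^{-1}) \subset \mathrm{W}^{-1,\infty}(0,T;H^{-1})$), and that the noise term $\dot{\mathscr{G}}$, which lives only as a distribution in time, is compatible with this framework; this is exactly the delicate part of the argument already carried out in Section~8 which I would cite rather than redo.
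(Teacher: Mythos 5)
Your proposal is correct and follows essentially the same route as the paper: the theorem is a synthesis of Sections 3--8, obtained by setting $\u=\v+\w$ and $p=q+P$, invoking Theorem \ref{mainth} for the existence and pathwise uniqueness of $\v$, and the de Rham argument of Section 8 for $q$. The only cosmetic difference is that the paper phrases uniqueness by forming the difference of two solutions $\u_1-\u_2$ of the original problem (which satisfies a zero-flux system without the $\w$-terms), whereas you reduce directly to the uniqueness of $\v$; these are equivalent since $\w$ is determined by $\mathscr{F}$.
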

\begin{proof}
The existence of the solution $\u(\x,t,\omega)$, satisfying the
prescribed flux condition $\mathscr{F}(t,\omega)$ is given by the
existence of the basic vector field $\w(\x,t,\omega)$, which carries
the flux $\mathscr{F}(t,\omega)$ and the divergence free, zero flux
vector field $\v(\x,t,\omega)$.

For the pathwise uniqueness of the solutions, let us assume that
there exists two solutions $\u_1(\x,t,\omega)$ and
$\u_2(\x,t,\omega)$ satisfying the prescribed flux condition
$\mathscr{F}(t,\omega)$. Hence the difference field
$\mathfrak{w}(t)=\mathfrak{w}(\x,t,\omega)=\u_1(\x,t,\omega)-\u_2(\x,t,\omega)$
satisfies the equation
\begin{align}
&\d\mathfrak{w}(t)=-\nu A\mathfrak{w}(t)\d
t-[B(\mathfrak{w}(t),\mathfrak{w}(t))+B(\u_1(t),\mathfrak{w}(t))+B(\mathfrak{w}(t),\u_2(t))]\d
t,\nonumber\\
& \mathfrak{w}(0)=0,\nonumber\\
&\int_{\Gamma}\mathfrak{w}\cdot\n\d S=0,\;\forall\;\omega\in\Omega,
\end{align}
with zero flux condition and which is similar to the system of
equations in (\ref{eqn111}) without external noise term . The
uniqueness of the solutions follows from the uniqueness of this
system, the properties of linear and bilinear operators and by using
the Poincar\'{e} inequality for the admissible channel domain.

The uniqueness of the pressure field $p(\x,t,\omega)$ follows from
the uniqueness of the constructed basic scalar field
$P(\x,t,\omega)$ in section 3 and the uniqueness of the perturbed
pressure $q(\x,t,\omega)$.
\end{proof}

\begin{remark}
Theorem \ref{thm11} can be extended to the case, where the original
problem (\ref{1})-(\ref{5}) has a multiplicative Gaussian noise or a
multiplicative L\'{e}vy noise as external forcing. For this case the
basic vector field can be constructed in the same way as discussed
in this paper. The method of proof of the existence and uniqueness
of the perturbed vector field, under the suitable assumptions
(growth property and Lipschitz's condition) on the noise
coefficients, can be obtained from \cite{Ma4}.
\end{remark}

\medskip\noindent
{\bf Acknowledgements:} Manil T. Mohan would like to thank Council
of Scientific and Industrial Research (CSIR) for a Senior Research
Fellowship. Utpal Manna's work has been supported by National Board
of Higher Mathematics (NBHM). S. S. Sritharan's work has been funded
by U. S. Army Research Office, Probability and Statistics program.
The authors would also like to thank the reviewer for his critical
and valuable comments. Utpal Manna and Manil T. Mohan would like to
thank Indian Institute of Science Education and Research(IISER) -
Thiruvananthapuram for providing stimulating scientific environment
and resources.

\end{document}